\documentclass[a4paper,10pt]{scrartcl}

\usepackage{ucs}
\usepackage[utf8x]{inputenc}
\usepackage{amsmath,amsfonts,amssymb,amsthm}
\usepackage[english]{babel}
\usepackage[OT1]{fontenc}
\usepackage{enumerate}
\usepackage{dsfont,mathtools}
\usepackage[pdftex]{hyperref}
\hypersetup{%
   pdfauthor=Daniel Karrasch,%
   pdftitle=Linearization of Hyperbolic Finite-Time Processes%
}
\usepackage[all]{hypcap}
\usepackage{aliascnt}

\author{Daniel Karrasch\footnote{Fachrichtung Mathematik, Technische Universit\"{a}t Dresden, 01062 Dresden, Germany. E-mail: Daniel.Karrasch@tu-dresden.de} \footnote{This article is based on a part of the author's PhD thesis \cite{Karrasch2012-3}.}}
\title{Linearization of Hyperbolic Finite-Time Processes}
\date{August 27, 2012}

\numberwithin{equation}{section}
\theoremstyle{definition}
\newtheorem{definition}{Definition}[section]

\newaliascnt{example}{definition}
\newaliascnt{theorem}{definition}
\newaliascnt{proposition}{definition}
\newaliascnt{lemma}{definition}
\newaliascnt{corollary}{definition}
\newaliascnt{remark}{definition}
\newaliascnt{claim}{definition}

\theoremstyle{plain}
\newtheorem{theorem}[theorem]{Theorem}
\newtheorem{proposition}[proposition]{Proposition}
\newtheorem{lemma}[lemma]{Lemma}

\newtheorem{corollary}[corollary]{Corollary}

\theoremstyle{remark}
\newtheorem{remark}[remark]{Remark}

\aliascntresetthe{example}
\aliascntresetthe{theorem}
\aliascntresetthe{proposition}
\aliascntresetthe{lemma}
\aliascntresetthe{corollary}
\aliascntresetthe{remark}
\aliascntresetthe{claim}

\renewcommand{\d}{\,\mathrm{d}}
\newcommand{\gap}{\Theta}
\newcommand{\cGL}{GL(n,\R)}
\newcommand{\I}{\mathds{I}}
\newcommand{\J}{\mathds{J}}
\newcommand{\cK}{\mathcal{K}}
\newcommand{\cL}{L}
\newcommand{\cLP}{\mathcal{L}\mathcal{P}}
\newcommand{\N}{\mathds{N}}
\renewcommand{\P}{\mathbb{P}} 
\newcommand{\dP}{\mathds{P}} 
\newcommand{\cP}{\mathcal{P}}
\newcommand{\R}{\mathds{R}}
\renewcommand{\S}{\mathcal{S}}
\newcommand{\Vs}{V^{\textnormal{s}}}
\newcommand{\cVs}{\mathcal{V}^{\textnormal{s}}}
\newcommand{\Vu}{V^{\textnormal{u}}}
\newcommand{\cVu}{\mathcal{V}^{\textnormal{u}}}
\newcommand{\eps}{\varepsilon}
\renewcommand{\phi}{\varphi}
\newcommand{\e}{\mathrm{e}}
\newcommand{\Ineq}{\neq_{\I\times\I}}
\newcommand{\dH}{d_{\textnormal{H}}}
\newcommand{\spec}[1][]{\Sigma^{#1}}
\newcommand{\resolv}[1][]{\rho^{#1}}
\newcommand{\tmin}{t_-}
\newcommand{\tmax}{t_+}
\newcommand{\abs}[1]{\left\lvert#1\right\rvert}
\newcommand{\norm}[1]{\left\lVert#1\right\rVert}
\newcommand{\set}[1]{\left\lbrace#1\right\rbrace}
\newcommand{\Ws}{W^{\textnormal{s}}}
\newcommand{\cWs}{\mathcal{W}^{\textnormal{s}}}
\newcommand{\Wu}{W^{\textnormal{u}}}
\newcommand{\cWu}{\mathcal{W}^{\textnormal{u}}}
\newcommand{\lgr}{\underline{\lambda}}
\newcommand{\ugr}{\overline{\lambda}}
\newcommand{\elgr}[1][]{\underline{\lambda}_{#1}}
\newcommand{\eugr}[1][]{\overline{\lambda}_{#1}}

\DeclareMathOperator{\id}{id}
\DeclareMathOperator{\dist}{dist}

\DeclareMathOperator{\im}{im}
\DeclareMathOperator{\linhull}{span}
\DeclareMathOperator{\Gr}{Gr}
\DeclareMathOperator{\St}{St}
\DeclareMathOperator{\rk}{rk}

\begin{document}

\maketitle

\begin{abstract}
We adapt the notion of processes to introduce an abstract framework for dynamics in finite time, i.e.\ on compact time-sets. For linear finite-time processes a notion of hyperbolicity namely exponential monotonicity dichotomy (EMD) is introduced, thereby generalizing and unifying several existing approaches. We present a spectral theory for linear processes in a coherent way, based only on a logarithmic difference quotient. In this abstract setting we introduce a new topology, prove robustness of EMD and provide exact perturbation bounds. We suggest a new, intrinsic approach for the investigation of linearizations of finite-time processes, including finite-time analogues of the local (un-)stable manifold theorem and theorem of linearized asymptotic stability. As an application, we discuss our results for ordinary differential equations on a compact time-interval.
\end{abstract}

\textbf{Keywords:} Finite-time dynamics, hyperbolicity, spectral theorem, robustness, linearization, stable manifold, nonautonomous differential equations\\

\textbf{MSC:} primary 34A30, 37B55, 37D05, secondary 34D09, 37D10, 37N10

\section{Introduction}

\subsection{Motivation}

In this article we deal with what is informally referred to as \emph{finite-time dynamics} and is usually represented by ordinary differential equations on compact time-intervals, i.e.\
\begin{equation*}
\dot{x} = f(t,x),
\end{equation*}
where, for instance, $f\in C(\I\times\R^n,\R^n)$ and $\I=[\tmin,\tmax]$, $\tmin<\tmax$. The need to analyze such equations arises in many applications such as transport problems in fluid, ocean or atmosphere dynamics (see for instance \cite{Peacock2010} for a recent review), but also increasingly in biological applications (see \cite{Aldridge2006,Rateitschak2010}). There are at least two reasons why one is interested in dynamics on bounded time-sets: one is the interest in transient behavior of solutions although the differential equation might be given on the real (half-)line, and the other one is the simple fact that the right hand side $f$ is given only on a bounded time-set, e.g.\ when it is deduced from observations or measurements. In any case, classical, i.e.\ asymptotic, concepts do not apply directly to the finite-time situation. During the last years efforts were made to establish finite-time analogues to asymptotic notions such as hyperbolicity of trajectories \cite{Haller1998} and linearizations \cite{Berger2009,Rasmussen2010}, Lyapunov exponents \cite{Haller2001,Shadden2005,Haller2011} and stable and unstable manifolds \cite{Haller1998,Haller2001,Duc2008,Berger2011}.

The present article contributes to the investigation of hyperbolicity and its implications in finite time, develops the theory in a coherent way and is organized as follows: after introducing notations in \autoref{sec:notation}, we propose an abstract framework for dynamics on compact time sets $\I$, not necessarily intervals, in \autoref{sec:processes}. For that purpose, there is no need to start the investigation with solution operators generated by ordinary differential equations (ODEs), which are given on an interval containing $\I$. Instead, our framework is based on the well-known notion of processes, thus we state explicitly all required regularity conditions. To the best of our knowledge, such a first-principles approach to finite-time dynamics is proposed for the first time. It prepares the ground for numerical analysis. Starting from a slight modification of the elementary notion of a difference quotient, namely the logarithmic difference quotient, we introduce growth rates as supremum and infimum over the logarithmic difference quotient of subspaces. The step-by-step procedure enables us to prove an important new continuity result, see \autoref{prop:grc}. Another new feature is the introduction of a natural topology on linear finite-time processes, which is consistent with the supremum metric on continuous linear right hand sides of ODEs. The topology allows to consider perturbations of linear processes as in \autoref{cor:grcont}, \autoref{lemma:egrcont} and \autoref{prop:spectrumcont}. In \autoref{sec:hyperbolicity} we develop a spectral theory for linear finite-time processes which is essentially known for solution operators on time-sets consisting of two points \cite{Rasmussen2010} and for intervals \cite{Berger2009,Doan2011}. The underlying hyperbolicity notion is motivated by several approaches \cite{Haller2000,Berger2009,Rasmussen2010,Doan2012}. These are generalized and unified altogether. We call this kind of finite-time hyperbolicity suggestively \emph{exponential monotonicity dichotomy (EMD)}. In \autoref{sec:perturbation} we make use of the introduced topology to establish the robustness of EMD easily. In \autoref{sec:linearization} we introduce formally linearizations of finite-time processes along trajectories and establish finite-time analogues of classical linearization theorems. We integrate the notion of stable and unstable cones firstly introduced in \cite{Doan2011}, but characterize them by bounds on the growth rate functions, and give a direct and intrinsic proof of a Local Stable and Unstable Manifold Theorem as well as a finite-time analogue of the Theorem of Linearized Asymptotic Stability. In \autoref{sec:hartman-grobman} we investigate the local relationship between a process and its linearization, which can be interpreted as a finite-time Hartman-Grobman-like Theorem. In \autoref{sec:applications} we apply the general results of this article to ordinary differential equations on compact time-intervals. It turns out that the concept of finite-time Lyapunov exponents (FTLEs) coincides with the EMD-spectrum on time-sets consisting of two points. Consequently, our definition of spectrum can be used for a generalization of the FTLE-concept to arbitrary compact time-sets.

\subsection{Preliminaries and Notation}
\label{sec:notation}

Throughout this article $\I\subset\R$ denotes a compact subset of the real numbers, where $\tmin\coloneqq\min\I$ and $\tmax\coloneqq\max\I$. We will have occasions to mention the set of all ordered pairs of numbers in $\I$ with unequal components. In accordance with the notion of a relation, we denote by $\Ineq\coloneqq\set{(t,s)\in\I\times\I;~t\neq s}\subset\I\times\I$ the unequal-relation on $\I\times\I$.

For a set $M$ we write $\id_M$ for the identity function on $M$ and $2^M$ for its power set. To distinguish between the evaluation of a function $f\colon A\to B$ at $x\in A$ from the image of a subset $A'\subseteq A$ we write $f(x)$ for the former and $f[A']$ for the latter.

In the following, we consider dynamics on the Banach space $(\R^n,\abs{\cdot})$ with an arbitrary Banach space (vector) norm $\abs{\cdot}$, which in turn induces an operator norm denoted by $\norm{\cdot}$. By $\S$ we denote the unit sphere with respect to the given Banach space norm on $\R^n$. At some point we will make use of the natural Euclidean structure of $\R^n$ to refer to orthogonality. Nevertheless, we do not need specific Hilbert space arguments. As usual, we denote by $\cL(\R^n)$ the set of linear operators on $\R^n$ and by $\cGL$ the subset of invertible operators on $\R^n$.

For a Lipschitz continuous function $f$ we denote by $\abs{f}_{\text{Lip}}$ the Lipschitz constant of $f$. For time-dependent functions, whether vector- or operator-valued, the notation $\norm{\cdot}_{\infty}$ denotes the supremum norm over time with respect to the respective norms. For a continuously differentiable function $f\colon B_0\times\ldots\times B_k\to B$, where $B,B_0,\ldots,B_k$ are (open subsets of) Banach spaces, we denote the partial derivative of $f$ with respect to the $j$-th variable evaluated at $x\in B_0\times\ldots\times B_k$ by $\partial_jf(x)$. Note that the first argument has index $0$. In case $f$ has only one argument, the derivative is also denoted by $f'(x)$ or $\dot{f}(t)$ if the argument is the time variable.

We write $B(x,\delta)$ and $B[x,\delta]$, respectively, for the open and closed ball around $x$ with radius $\delta\in\R_{>0}$.

\section{Finite-Time Processes \& Growth Rates}
\label{sec:processes}

\subsection{Processes}

The notion of a \emph{process} or \emph{two-parameter semi-flow} was originally introduced in \cite{Dafermos1971}. However, we require slightly different conditions. In particular, we include invertibility in the definition.

\begin{definition}[Process, linear process, smooth process]\label{def:linprocess}
We call a continuous function
\begin{align*}
\phi\colon\I\times\I\times\R^n&\to\R^n, & (t,s,x)&\mapsto\phi(t,s,x),
\end{align*}
an \emph{invertible process on $\I$ with state space $\R^n$} if it is Lipschitz continuous in the first argument and if for any $t,s,r\in\I$ and $x\in\R^n$ we have $\phi(t,t,\cdot)=\id_{\R^n}$ and $\phi(t,s,\cdot)\circ\phi(s,r,\cdot)=\phi(t,r,\cdot)$. We denote by $\cP(\I,\R^n)$ the set of invertible processes on $\I$ with state space $\R^n$. We call an invertible process $\Phi\in\cP(\I,\R^n)$ a \emph{linear invertible process on $\I$} if for any $t,s\in\I$ we have $\Phi(t,s,\cdot)\in\cGL$ and the function $t\mapsto\Phi(t,s,\cdot)\in\cL(\R^n)$ is Lipschitz continuous with respect to the operator norm. To emphasize the linearity in the last argument we will write $\Phi(t,s)x$ instead of $\Phi(t,s,x)$ for $t,s\in\I$, $x\in\R^n$. We denote by $\cLP(\I,\R^n)$ the set of linear invertible processes on $\I$ with state space $\R^n$. Let $k\in\N_{>0}$. We call an invertible process $\phi$ a \emph{$C^k$-process on $\I$} if for any $t,s\in\I$ and $x\in\R^n$ we have $\phi(t,s,\cdot)\in C^k(\R^n,\R^n)$ and $\partial_2\phi(\cdot,s,x)\in\cL(\R^n)^{\I}$ is Lipschitz continuous.
\end{definition}

In the following, we always consider invertible processes and hence skip the word invertible. The required Lipschitz continuity of $t\mapsto\Phi(t,\tmin)$ in \autoref{def:linprocess} for a linear process $\Phi$ is rather a technical assumption than an integral part of the notion of a process. In principle, it could be replaced by the weaker, but again technical, assumption of absolute continuity. For convenience, we will assume Lipschitz continuity in the sequel and point out which implications the weaker assumption would have. However, the necessity for a stronger continuity assumption than just continuity will become clear in the course of this article.

We start our investigations with linear processes on $\I$.

\begin{lemma}\label{lemma:uniformconttime}
Let $\Phi\in\cLP(\I,\R^n)$. Then the function
\begin{align*}
\I\times\I&\to\cL(\R^n), & (t,s)&\mapsto\Phi(t,s),
\end{align*}
is uniformly continuous with respect to the (induced) operator norm.
\end{lemma}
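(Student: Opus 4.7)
The plan is to reduce the claim to pointwise continuity by appealing to the Heine--Cantor theorem: since $\I\times\I$ is compact and $\cL(\R^n)$ is a complete metric space under the operator norm, any map on $\I\times\I$ that is continuous is automatically uniformly continuous. Hence it suffices to verify continuity at an arbitrary fixed point $(t_0,s_0)\in\I\times\I$.

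For this I would exploit the cocycle identity $\Phi(t,s)=\Phi(t,s_0)\Phi(s_0,s)$ together with $\Phi(s_0,s_0)=\id_{\R^n}$ to telescope
\begin{equation*}
\Phi(t,s)-\Phi(t_0,s_0)=\bigl[\Phi(t,s_0)-\Phi(t_0,s_0)\bigr]+\Phi(t,s_0)\bigl[\Phi(s_0,s)-\id_{\R^n}\bigr].
\end{equation*}
The first bracket is of order $\abs{t-t_0}$ by the Lipschitz continuity of $t\mapsto\Phi(t,s_0)$ postulated in \autoref{def:linprocess}. For the second bracket, continuity of $t\mapsto\Phi(t,s_0)$ on the compact set $\I$ ensures that $\norm{\Phi(t,s_0)}$ stays uniformly bounded, so the whole expression will go to zero provided $\Phi(s_0,s)\to\id_{\R^n}$ in operator norm as $s\to s_0$.

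This last convergence is the main obstacle, since the definition of a linear process provides Lipschitz continuity only in the first argument. To bridge the gap, I would apply the cocycle identity once more: $\Phi(s_0,s)\Phi(s,s_0)=\Phi(s_0,s_0)=\id_{\R^n}$ gives $\Phi(s_0,s)=\Phi(s,s_0)^{-1}$. The map $s\mapsto\Phi(s,s_0)$ has its variable in the first slot and is therefore Lipschitz continuous by hypothesis; at $s=s_0$ it equals $\id_{\R^n}\in\cGL$. Composing with the inversion map on $\cGL$, which is continuous, produces $\Phi(s_0,s)\to\id_{\R^n}$ as $s\to s_0$. Combining the two bounds yields continuity at $(t_0,s_0)$, and Heine--Cantor finishes the proof.
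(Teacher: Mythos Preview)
Your argument is correct. You reduce to pointwise continuity via Heine--Cantor, then exploit the cocycle identity together with the Lipschitz continuity of $t\mapsto\Phi(t,s_0)$ and the continuity of inversion on $\cGL$ to handle the two variables separately. Each step goes through as stated.

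The paper, however, takes a shorter and more elementary route: since $\Phi\colon\I\times\I\times\R^n\to\R^n$ is jointly continuous by the definition of a process, its restriction to the compact set $\I\times\I\times B[0,1]$ is uniformly continuous; the operator norm of $\Phi(t,s)-\Phi(t',s')$ is then controlled directly by $\sup_{x\in B[0,1]}\abs{\Phi(t,s,x)-\Phi(t',s',x)}$. In particular, the paper's proof uses neither the cocycle property nor the Lipschitz hypothesis in the first argument---a point explicitly remarked upon right after the lemma. Your approach, by contrast, relies on both of these structural features (plus continuity of inversion), which is heavier but would transfer to situations where joint continuity in all three arguments is not assumed outright and one has only one-variable regularity together with the two-parameter flow identity.
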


\begin{proof}
This is a direct consequence of the uniform continuity of $\Phi\bigr|_{\I\times\I\times B[0,1]}$.
\end{proof}

By the compactness of $\I$ we obtain in the next lemma the continuous dependence of (the norm of) trajectories on the initial value, uniformly in time.

\begin{lemma}\label{lemma:uniformcontspace}
Let $\phi\in\cP(\I,\R^n)$ and $\Phi\in\cLP(\I,\R^n)$. Then the function
\begin{align*}
f\colon\R^n&\to C(\I,\R^n), & x &\mapsto\left(t\mapsto\phi(t,\tmin)x\right),
\end{align*}
is continuous and the functions
\begin{align*}
g\colon\R^n&\to C(\I,\R^n), & x &\mapsto\left(t\mapsto\Phi(t,\tmin)x\right),\\
h\colon\R^n&\to C(\I,\R_{>0}), & x &\mapsto\left(t\mapsto\abs{\Phi(t,\tmin)x}\right)
\end{align*}
are Lipschitz continuous.
\end{lemma}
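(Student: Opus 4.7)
The plan is to handle $f$ by a compactness-plus-continuity argument and to handle $g$ and $h$ by exploiting linearity together with a uniform bound on $\|\Phi(t,\tmin)\|$ that is furnished by \autoref{lemma:uniformconttime}.

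For $f$: fix $x_0\in\R^n$ and consider the compact cylinder $K:=\I\times\set{\tmin}\times B[x_0,1]\subset\I\times\I\times\R^n$. Since $\phi$ is continuous and $K$ is compact, $\phi|_K$ is uniformly continuous. Given $\eps>0$, I pick $\delta\in(0,1)$ from uniform continuity so that $\abs{x-x_0}<\delta$ forces $\abs{\phi(t,\tmin,x)-\phi(t,\tmin,x_0)}<\eps$ for every $t\in\I$ simultaneously; taking the supremum in $t$ yields $\norm{f(x)-f(x_0)}_{\infty}\le\eps$, which is continuity of $f$ at $x_0$. (Note that mere continuity of $\phi$ does not give a Lipschitz estimate in general, so this is the best one can claim for $f$.)

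For $g$: by \autoref{lemma:uniformconttime} the map $t\mapsto\Phi(t,\tmin)$ is continuous on the compact set $\I$ with values in $(\cL(\R^n),\norm{\cdot})$, hence bounded. Set
\begin{equation*}
M\coloneqq\sup_{t\in\I}\norm{\Phi(t,\tmin)}<\infty.
\end{equation*}
For arbitrary $x,y\in\R^n$ linearity in the last argument gives $\abs{\Phi(t,\tmin)x-\Phi(t,\tmin)y}=\abs{\Phi(t,\tmin)(x-y)}\le M\abs{x-y}$ for every $t\in\I$, and taking the supremum in $t$ yields $\norm{g(x)-g(y)}_{\infty}\le M\abs{x-y}$. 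Thus $g$ is Lipschitz with constant $M$.

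For $h$: the reverse triangle inequality gives $\bigl\lvert\abs{\Phi(t,\tmin)x}-\abs{\Phi(t,\tmin)y}\bigr\rvert\le\abs{\Phi(t,\tmin)x-\Phi(t,\tmin)y}$, so the same bound $M$ from the previous step immediately yields $\norm{h(x)-h(y)}_{\infty}\le M\abs{x-y}$, i.e.\ $h$ is also Lipschitz with constant $M$. The whole argument is routine; the only point where something substantive is used is the finiteness of $M$, and that in turn is guaranteed by the uniform continuity established in \autoref{lemma:uniformconttime} together with compactness of $\I$. No genuine obstacle is anticipated.
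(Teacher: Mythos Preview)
Your proof is correct and follows essentially the same route as the paper's own proof: uniform continuity of $\phi$ on a compact cylinder for $f$, the uniform bound $\norm{\Phi(\cdot,\tmin)}_{\infty}<\infty$ from \autoref{lemma:uniformconttime} plus linearity for $g$, and Lipschitz continuity of the norm (your reverse triangle inequality) for $h$. The only cosmetic difference is that the paper restricts to $\I\times\I\times B[x,1]$ rather than $\I\times\set{\tmin}\times B[x_0,1]$, which makes no difference to the argument.
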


\begin{proof}
To see the continuity of $f$ in some $x\in\R^n$ it suffices to restrict the continuous function $\phi$ to the compact set $\I\times\I\times B[x,1]\subset\R^{2+n}$, thereby turning it into a uniformly continuous function. The continuity of $f$ in $x$ follows directly from that uniform continuity. As for the continuity of $g$, let $x,y\in\R^n$ and estimate
\[
\norm{g(x)-g(y)}_{\infty}=\sup\set{\abs{\Phi(t,\tmin)(x-y)};~t\in\I}\leq\norm{\Phi(\cdot,\tmin)}_{\infty}\abs{x-y},
\]
where $\norm{\Phi(\cdot,\tmin)}_{\infty}<\infty$ follows from \autoref{lemma:uniformconttime} and the compactness of $\I$. The Lipschitz continuity of $h$ follows from the Lipschitz continuity of the norm.
\end{proof}

Note that \autoref{lemma:uniformconttime} and \autoref{lemma:uniformcontspace} did not use the Lipschitz continuity assumption in \autoref{def:linprocess}. Even stronger, the continuity of $x\mapsto\left(t\mapsto\phi(t,\tmin)x\right)$ relies only on the continuity of $\phi$ as a function defined on $\I\times\I\times\R^n$ and mapping to $\R^n$. Although seemingly simple, \autoref{lemma:uniformcontspace} indicates an important difference in asymptotic and finite-time analysis: on unbounded (to the right) time-sets the continuous dependence of whole trajectories on the initial value corresponds to the definition of stability in the sense of Lyapunov, which is a nontrivial feature of certain trajectories. In the finite-time case, as \autoref{lemma:uniformcontspace} shows, it holds under very general assumptions.

\subsection{Logarithmic Difference Quotient}

The following concept will help us to introduce some of the forthcoming notions and to present the theory in an elegant and coherent way.

\begin{definition}[Logarithmic difference quotient]\label{def:logdiffquot}
We define
\begin{align*}
\Delta_{\I}\colon C(\I,\R_{>0}) &\to C(\Ineq,\R), & f&\mapsto\left((t,s)\mapsto\frac{\ln f(t)-\ln f(s)}{t-s}\right),
\end{align*}
and we call $\Delta_{\I}(f)(t,s)$ the \emph{logarithmic difference quotient of $f$ at $t$ and $s$} for $f\in C(\I,\R_{>0})$ and $t,s\in\Ineq$. For notational convenience we will write $\Delta$ for $\Delta_{\I}$ when there is no risk of confusion.
\end{definition}

\begin{remark}\label{remark:boundedlog}
Note that due to compactness of $\I$ any function $f\in C(\I,\R_{>0})$ is uniformly continuous and $f[\I]\subseteq[a,b]$ with $0<a<b$, i.e.\ $f$ is bounded above and bounded away from zero. Furthermore, $\ln\bigr|_{[a,b]}$ is Lipschitz continuous with Lipschitz constant $\frac{1}{a}$ and bounded. Suppose $f$ is additionally Lipschitz continuous, then
\begin{equation*}
\abs{\sup\set{\Delta(f)(t,s);~(t,s)\in\Ineq}},\abs{\inf\set{\Delta(f)(t,s);~(t,s)\in\Ineq}}\leq\frac{\abs{f}_{\text{Lip}}}{a}.
\end{equation*}
Furthermore, we have that $\Delta(\cdot)(t,s)$ is continuous for any $(t,s)\in\Ineq$. Summarizing, $\Delta$ considered as
\begin{align*}
\Delta\colon C(\I,\R_{>0})\times\Ineq&\to\R, & (f,t,s)&\mapsto\frac{\ln f(t)-\ln f(s)}{t-s},
\end{align*}
is continuous in the first argument and jointly continuous in the last two arguments, but, in general, not jointly continuous in all three arguments. However, for a linear process $\Phi\in\cLP(\I,\R^n)$ one can show that the family of functions $(\Delta(\abs{\Phi(\cdot,\tmin)x}))_{x\in\S}$ is equicontinuous. This can be calculated directly or, alternatively, can be deduced from \autoref{lemma:uniformcontspace} and the Theorem of Arzel\`{a}-Ascoli.
\end{remark}

When applied to a linear process we recover joint continuity for the logarithmic difference quotient.

\begin{lemma}\label{lemma:Deltacont}
Let $\Phi\in\cLP(\I,\R^n)$. Then
\begin{align*}
\R^n\setminus\set{0}\times\Ineq&\to\R, & (x,t,s)&\mapsto\Delta(\abs{\Phi(\cdot,\tmin)x})(t,s),
\end{align*}
is continuous.
\end{lemma}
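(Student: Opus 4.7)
The plan is to fix a point $(x_0,t_0,s_0)\in(\R^n\setminus\set{0})\times\Ineq$ and to decompose the difference via the triangle inequality as
\[
\bigl|\Delta(\abs{\Phi(\cdot,\tmin)x})(t,s)-\Delta(\abs{\Phi(\cdot,\tmin)x_0})(t,s)\bigr|+\bigl|\Delta(\abs{\Phi(\cdot,\tmin)x_0})(t,s)-\Delta(\abs{\Phi(\cdot,\tmin)x_0})(t_0,s_0)\bigr|,
\]
thereby separating the dependence on the base point $x$ from that on the time arguments. The second summand is handled by \autoref{remark:boundedlog}: for the fixed positive continuous function $f=\abs{\Phi(\cdot,\tmin)x_0}$ the map $(t,s)\mapsto\Delta(f)(t,s)$ is jointly continuous on $\Ineq$, so this term tends to zero as $(t,s)\to(t_0,s_0)$.

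For the first summand, observe that $t\mapsto\abs{\Phi(t,\tmin)x_0}$ is continuous on the compact set $\I$ and strictly positive everywhere because $\Phi(t,\tmin)\in\cGL$; hence it admits a positive lower bound $a>0$. By the Lipschitz continuity of $h$ from \autoref{lemma:uniformcontspace}, there is a neighborhood $U$ of $x_0$ on which $\abs{\Phi(t,\tmin)x}\geq a/2$ uniformly in $t\in\I$. Since $\ln$ is Lipschitz on $[a/2,\infty)$ with constant $2/a$, a second application of \autoref{lemma:uniformcontspace} yields that $x\mapsto\ln\abs{\Phi(\cdot,\tmin)x}$ is Lipschitz from $U$ into $C(\I,\R)$ endowed with the supremum norm. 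Restricting further to a neighborhood $V$ of $(t_0,s_0)$ in $\Ineq$ on which $\abs{t-s}\geq\tfrac{1}{2}\abs{t_0-s_0}$, the first summand is bounded by
\[
\frac{4}{\abs{t_0-s_0}}\cdot\norm{\ln\abs{\Phi(\cdot,\tmin)x}-\ln\abs{\Phi(\cdot,\tmin)x_0}}_{\infty},
\]
which vanishes as $x\to x_0$.

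The main care point is the interplay between smallness of the numerator in $x$ and the possibly shrinking denominator $t-s$: the argument only works because we may first fix a neighborhood of $(t_0,s_0)$ in which $\abs{t-s}$ is bounded away from zero, and only then exploit sup-norm smallness of $\ln\abs{\Phi(\cdot,\tmin)x}-\ln\abs{\Phi(\cdot,\tmin)x_0}$. This is precisely why continuity is asserted on $\Ineq$ rather than on $\I\times\I$, and why the failure of joint continuity of $\Delta$ discussed in \autoref{remark:boundedlog} is avoided here by the quantitative Lipschitz estimates provided by \autoref{lemma:uniformcontspace}.
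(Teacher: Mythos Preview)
Your proof is correct. The decomposition via the triangle inequality is the same as the paper's, but you distribute the work differently. The paper handles the time variables first, uniformly in the space variable, by invoking the equicontinuity of the family $(\Delta(\abs{\Phi(\cdot,\tmin)x}))_{x\in\S}$ announced in \autoref{remark:boundedlog}; it then treats the space variable at the fixed pair $(t,s)$. You do the opposite: you freeze $x_0$ to dispose of the $(t,s)$-continuity directly from \autoref{remark:boundedlog}, and then control the $x$-variation at the moving pair $(t,s)$ by first pinning $\abs{t-s}\geq\tfrac{1}{2}\abs{t_0-s_0}$ and then using the sup-norm Lipschitz estimate for $x\mapsto\ln\abs{\Phi(\cdot,\tmin)x}$ coming from \autoref{lemma:uniformcontspace}.

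Your route is slightly more elementary: it avoids the equicontinuity claim (which the paper states but does not prove, appealing to Arzel\`a--Ascoli or a direct computation using the Lipschitz-in-time hypothesis). In fact your argument uses only the continuity of $\Phi$ and the Lipschitz estimate of \autoref{lemma:uniformcontspace}, not the Lipschitz continuity of $t\mapsto\Phi(t,\tmin)$. The paper's route, on the other hand, is better aligned with what comes next: the equicontinuity over $\S$ is precisely the uniform object one needs when passing to the closure $\overline{\Ineq}$ in the General Assumption.
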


Note that the function defined in \autoref{lemma:Deltacont} is well-defined, since we assume the linear process to be invertible (no trajectory starting in $\R^n\setminus\set{0}$ at $\tmin$ attains zero on $\I$).

\begin{proof}
Let $x\in\R^n\setminus\set{0}$, $(t,s)\in\Ineq$ and $\eps\in\R_{>0}$. By equicontinuity with respect to the initial value there exists $\delta_1\in\R_{>0}$ such that for all $y\in\R^n\setminus\set{0}$
\begin{equation*}
\max\set{\abs{t-t'},\abs{s-s'}}<\delta_1~\Rightarrow~\abs{\Delta(\abs{\Phi(\cdot,\tmin)y})(t,s)-\Delta(\abs{\Phi(\cdot,\tmin)y})(t',s')}<\frac{\eps}{2}.
\end{equation*}
By \autoref{lemma:uniformcontspace} and hence continuity of $\R^n\setminus\set{0}\ni y\mapsto\Delta(\abs{\Phi(\cdot,\tmin)y})(t,s)$ there exists $\delta_2\in\R_{>0}$ such that for all $x'\in\R^n\setminus\set{0}$
\begin{equation*}
\abs{x-x'}<\delta_2~\Rightarrow~\abs{\Delta(\abs{\Phi(\cdot,\tmin)x})(t,s)-\Delta(\abs{\Phi(\cdot,\tmin)x'})(t,s)}<\frac{\eps}{2}.
\end{equation*}
Combining these two estimates, we obtain for $(t',s')\in\Ineq$, $x'\in\R^n\setminus\set{0}$
\begin{align*}
\abs{\Delta(\abs{\Phi(\cdot,\tmin)x})(t,s)-\Delta(\abs{\Phi(\cdot,\tmin)x'})(t',s')}<\eps,
\end{align*}
whenever $\max\set{\abs{t-t'},\abs{s-s'},\abs{x-x'}}<\min\set{\delta_1,\delta_2}$.
\end{proof}

\begin{remark}
The logarithmic difference quotient can be regarded as a \emph{finite-time exponential growth rate} as introduced in \cite{Colonius2008}. With the notation used there, we find that
\begin{equation*}
\lambda^{(t-s)}(s,\Phi(s,\tmin)x)=\Delta(\abs{\Phi(\cdot,\tmin)x})(t,s).
\end{equation*}
\end{remark}

\autoref{def:logdiffquot} and \autoref{lemma:Deltacont} will have a major impact on the following theory and allow a development of the known finite-time spectral theory, starting from the notion of the logarithmic difference quotient alone. Another useful observation is the following, which holds obviously by linearity of $\Phi(t,s)$ and \autoref{def:logdiffquot}.

\begin{lemma}\label{lemma:constlines}
Let $\Phi\in\cLP(\I,\R^n)$. Then for any $x\in\R^n\setminus\set{0}$ and $\lambda\in\R\setminus\set{0}$ one has
\begin{equation*}
\Delta(\abs{\Phi(\cdot,\tmin)(\lambda x)})=\Delta(\abs{\Phi(\cdot,\tmin)x}).
\end{equation*}
\end{lemma}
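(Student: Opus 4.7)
The plan is to unwind both sides directly from the definitions, using only linearity of $\Phi(t,\tmin)$, absolute homogeneity of the norm, and the additivity of the logarithm. There is no real obstacle here; the point of the lemma is to record a scaling invariance that will later allow us to pass from arguments $x \in \R^n \setminus \{0\}$ to representatives on the unit sphere $\S$.

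First, for fixed $t \in \I$ and $\lambda \in \R \setminus \{0\}$, linearity of the operator $\Phi(t,\tmin) \in \cGL$ yields $\Phi(t,\tmin)(\lambda x) = \lambda\, \Phi(t,\tmin) x$, and hence by absolute homogeneity of the norm
\begin{equation*}
\abs{\Phi(t,\tmin)(\lambda x)} = \abs{\lambda}\,\abs{\Phi(t,\tmin) x}.
\end{equation*}
Since $\Phi$ is an invertible linear process and $x \neq 0$, both sides are strictly positive, so we may take logarithms to obtain $\ln\abs{\Phi(t,\tmin)(\lambda x)} = \ln\abs{\lambda} + \ln\abs{\Phi(t,\tmin)x}$.

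Plugging this into \autoref{def:logdiffquot}, the additive constant $\ln\abs{\lambda}$ cancels in the difference, so for every $(t,s) \in \Ineq$
\begin{equation*}
\Delta(\abs{\Phi(\cdot,\tmin)(\lambda x)})(t,s) = \frac{\ln\abs{\Phi(t,\tmin)(\lambda x)} - \ln\abs{\Phi(s,\tmin)(\lambda x)}}{t-s} = \frac{\ln\abs{\Phi(t,\tmin)x} - \ln\abs{\Phi(s,\tmin)x}}{t-s},
\end{equation*}
which equals $\Delta(\abs{\Phi(\cdot,\tmin)x})(t,s)$. As this holds pointwise on $\Ineq$, the two functions in $C(\Ineq,\R)$ coincide, completing the argument.
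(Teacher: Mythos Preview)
Your proof is correct and follows exactly the approach the paper indicates: the paper does not give a detailed proof but simply notes that the lemma ``holds obviously by linearity of $\Phi(t,s)$ and \autoref{def:logdiffquot}'', which is precisely the combination of linearity, absolute homogeneity of the norm, and cancellation of the additive constant $\ln\abs{\lambda}$ that you spell out.
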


As a consequence, we can equally well define $\Delta(\abs{\Phi(\cdot,\tmin)\cdot})$ on the quotient set $\P^{n-1}$, the (real) projective space obtained by identifying vectors $x,y\in\R^n\setminus\set{0}$ if they are nontrivial multiples of each other. The projective space $\P^{n-1}$ can be identified to the set of one-dimensional subspaces, or lines through the origin, in $\R^n$. A generalization of that concept is the Grassmann manifold $\Gr(k,\R^n)$ of $k$-dimensional subspaces of $\R^n$, which will turn out to be very useful. Let us briefly recall the construction and its basic properties (following essentially \cite{Ferrer1994,Absil2004}).

For any $k\in\set{1,\ldots,n}$ one can introduce the Grassmann manifold $\Gr(k,\R^n)$ as follows: consider the set of all orthonormal $k$-frames in $\R^n$, which is also referred to as the Stiefel manifold $\St(k,\R^n)$, i.e.\ $k$-tuples of orthonormal vectors in $\R^n$ represented by $n\times k$-matrices $A$ such that $A^{\top}A=\id$. It is well-known that the Stiefel manifold is a compact subset of $\R^{n\times k}$. The Grassmann manifold can then be obtained as the image of the function $\pi\colon\St(k,\R^n)\to\Gr(k,\R^n)$ mapping $A=\begin{pmatrix} a_1 & \cdots & a_k\end{pmatrix}$ to $\linhull\set{a_1,\ldots,a_k}$. Endowing $\Gr(k,\R^n)$ with the final topology with respect to $\pi$ turns it into a compact set. As is proved in \cite{Ferrer1994}, the so-called gap metric $\gap\colon\Gr(k,\R^n)^2\to\R_{\geq 0}$, $\gap(L,M)=\norm{\pi_L-\pi_M}$, where $\norm{\cdot}$ denotes the operator norm and $\pi_X$ denotes the orthogonal projection onto a subspace $X$, induces a topology that coincides with the final topology of $\pi$. Furthermore, the same topology is induced by the Hausdorff distance between the intersections of, respectively, the subspaces $L$ and $M$ with the unit sphere (see, for instance, \cite[Chapter 13]{Gohberg2006}). As a consequence, $X_n\xrightarrow{n\to\infty}X$ for $X\in\Gr(k,\R^n)$ and $(X_n)_{n\in\N}\in(\Gr(k,\R^n))^{\N}$ implies that for any $x\in X$ there exists a sequence $(x_n)_{n\in\N}\in\S^{\N}$ with $x_n\in X_n$ for each $n\in\N$ such that $x_n\xrightarrow{n\to\infty}x$ with respect to any norm on $\R^n$ by the equivalence of norms in finite-dimensional normed vector spaces.

As a consequence of \autoref{lemma:constlines}, we find that $x\mapsto\Delta(\abs{\Phi(\cdot,\tmin)x})(t,s)$ is actually uniformly continuous for any $(t,s)\in\Ineq$. Since many of the following results are concerned with continuity of functions derived from the logarithmic difference quotient, we put the following additional assumption on the linear processes which are considered in this work.\\

\textbf{General Assumption:} Throughout, we assume that for any $\Phi\in\cLP(\I,\R^n)$ the function
\begin{align*}
\S\times\Ineq&\to\R, & (x,t,s)&\mapsto\Delta(\abs{\Phi(\cdot,\tmin)x})(t,s)
\end{align*}
can be extended continuously to $\S\times\overline{\Ineq}$, where $\overline{\Ineq}$ denotes the closure of $\Ineq$ in $\R^2$.

As a consequence, $\overline{\Ineq}$ is compact and the extended function is uniformly continuous. The following remark discusses the consequences of our general assumption.

\begin{remark}\label{remark:generalassumption}
As $\I$ is compact, so is $\I\times\I\subset\R^2$. Clearly, $\Ineq$ is still bounded, but not necessarily closed. We need to distinguish two cases: either $\I$ has no limit points, i.e. $\I$ is finite, or $\I$ has limit points, e.g.\ when $\I$ is a (nontrivial) compact interval.

\begin{enumerate}[(a)]
\item If $\I$ does not have any limit points then so does $\Ineq$. Thus, $\Ineq$ is closed already and the general assumption does not pose any restrictions on the finite-time processes under consideration.
\item Let $t^*\in\I$ be a limit point. Then our general assumption requires that the limit
\[
\lim_{t,s\to t^*}\frac{\ln\abs{\Phi(t,\tmin)x}-\ln\abs{\Phi(s,\tmin)x}}{t-s}=\frac{\abs{\Phi(\cdot,\tmin)x}'(t^*)}{\abs{\Phi(t^*,\tmin)x}}
\]
exists for any $x\in\S$ and is continuous in $x$. A sufficient (but not necessary) condition for the derivative on the right-hand side to exist is the continuous differentiability of the norm (not at $0$, of course) and the differentiability of the trajectories $t\mapsto\Phi(t,\tmin)x$ for any $x\in\S$ at the (time-) limit point $t^*$. Clearly, solution operators of linear ordinary differential equations with continuous right-hand side as usually considered together with continuously differentiable norms do satisfy our general assumption.
\end{enumerate}
\end{remark}

The following simple observation associates the notions introduced in the present work to notions with the same nomenclature appearing in the references \cite{Berger2009,Doan2011,Doan2012}.

\begin{lemma}\label{lemma:equivalences}
Let $f\in C(\I,\R_{>0})$. Then the following statements are equivalent:
\begin{enumerate}[(i)]
\item There exists $\delta\in\R_{>0}$ such that for any $t,s\in\I$, $t\geq s$, one has $f(t)\leq\e^{-\delta(t-s)}f(s)$.
\item There exists $\delta\in\R_{>0}$ such that $t\mapsto\e^{\delta t}f(t)$ is decreasing.
\item $\sup\set{\Delta(f)(t,s);~(t,s)\in\Ineq}<0$.
\end{enumerate}
Moreover, if $\I$ is an interval and $f$ is differentiable, then each of the above statements is equivalent to
\begin{enumerate}[(i)]
\setcounter{enumi}{3}
\item there exists $\delta\in\R_{>0}$ such that $f'\leq-\delta f$ holds.
\end{enumerate}
\end{lemma}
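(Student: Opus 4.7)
The plan is to establish the equivalences as a short cycle of purely algebraic manipulations, plus one calculus argument for the extra equivalence with (iv).

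First, I would show $(i)\Leftrightarrow(ii)$ by direct rewriting: the inequality $f(t)\leq\e^{-\delta(t-s)}f(s)$ is, by multiplying through by $\e^{\delta t}$, equivalent to $\e^{\delta t}f(t)\leq\e^{\delta s}f(s)$. Quantifying this over all $t\geq s$ in $\I$ gives exactly that $t\mapsto\e^{\delta t}f(t)$ is (non-strictly) decreasing on $\I$, with the same constant $\delta$ working in both directions.

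Next I would show $(i)\Leftrightarrow(iii)$. Since $f$ takes values in $\R_{>0}$, taking logarithms in (i) gives $\ln f(t)-\ln f(s)\leq -\delta(t-s)$ for $t\geq s$. Dividing by $t-s>0$ yields $\Delta(f)(t,s)\leq-\delta$ when $t>s$; the case $t<s$ follows by symmetry because $\Delta(f)(t,s)=\Delta(f)(s,t)$ from the definition. Thus (i) with constant $\delta$ gives $\sup\Delta(f)\leq-\delta<0$, which is (iii). Conversely, if $\sup\{\Delta(f)(t,s);(t,s)\in\Ineq\}=:-\delta<0$, then reading the argument backwards for $t>s$ produces (i) with that $\delta$.

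Finally, under the added hypothesis that $\I$ is an interval and $f$ is differentiable, I would establish $(ii)\Leftrightarrow(iv)$ by the standard computation
\begin{equation*}
(\e^{\delta t}f(t))' = \e^{\delta t}(f'(t)+\delta f(t)).
\end{equation*}
Since $\e^{\delta t}>0$, the map $t\mapsto\e^{\delta t}f(t)$ is non-increasing on the interval $\I$ if and only if $f'(t)+\delta f(t)\leq 0$ for all $t\in\I$, which is exactly (iv) with the same $\delta$.

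I do not expect a genuine obstacle here; the only point requiring a little care is ensuring that the $t<s$ case in (iii) is covered when translating to (i), which is immediate from the symmetry of $\Delta(f)$ in its two arguments. Everything else is a one-line manipulation, and the three equivalences chain to give the full statement.
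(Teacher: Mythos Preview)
Your argument is correct. The paper actually omits the proof of this lemma entirely, treating the equivalences as evident; your proposal simply writes out the elementary manipulations that the paper leaves implicit. The only minor remark is that in the step $(ii)\Rightarrow(iv)$ you are silently using that on an interval a differentiable function is non-increasing if and only if its derivative is everywhere $\leq 0$; the ``only if'' part is immediate from the definition, while the ``if'' part uses the mean value theorem and is precisely where the interval hypothesis enters. You have this covered, so nothing further is needed.
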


Analogue statements hold for $f\in C(\I,\R_{>0})$ if $\inf\set{\Delta(f)(t,s);~(t,s)\in\Ineq}>0$.

\subsection{Growth Rates}
\label{sec:growthrates}

Next, we introduce \emph{growth rates}, functions that describe the dynamical behavior of subspaces under the linear process $\Phi$. With \autoref{lemma:equivalences} one easily establishes equivalence to the growth rates as defined in \cite{Berger2009,Doan2011,Doan2012}.

\begin{definition}[{Growth rate, cf.\ \cite{Berger2009,Doan2011,Doan2012}}]\label{def:growthrate}
We define
\begin{align*}
\lgr^{\I}\colon\bigcup_{k=1}^n \Gr(k,\R^n)\times\cLP(\I,\R^n)&\to\R, & (X,\Phi)&\mapsto\min_{\substack{x\in X\cap\S\\(t,s)\in\overline{\Ineq}}}\set{\Delta(\abs{\Phi(\cdot,\tmin)x})(t,s)},\\
\ugr^{\I}\colon\bigcup_{k=1}^n \Gr(k,\R^n)\times\cLP(\I,\R^n)&\to\R, & (X,\Phi)&\mapsto\max_{\substack{x\in X\cap\S\\(t,s)\in\overline{\Ineq}}}\set{\Delta(\abs{\Phi(\cdot,\tmin)x})(t,s)},
\end{align*}
and call $\lgr^{\I}(X,\Phi)$ and $\ugr^{\I}(X,\Phi)$, respectively, the \emph{lower} and \emph{upper growth rate of $X$ under $\Phi$}\index{growth rate!lower}\index{growth rate!upper}. For convenience, we drop the index $\I$ in case the time set is clear from the context. We extend the definition naturally by $\lgr(\set{0},\Phi) = \infty$ and $\ugr(\set{0},\Phi) = -\infty$ for any $\Phi\in\cLP(\I,\R^n)$.
\end{definition}

\begin{remark}\label{remark:Bohl}
One can consider the $1$-dimensional growth rates as finite-time analogues to \emph{Bohl exponents}, see \cite{Bohl1913} and also \cite[p.\ 118, pp.\ 146--148]{Daletskij1974} for the definition and a historical review. Furthermore, one readily verifies that for any $\Phi\in\cLP(\I,\R^n)$ and $X\in\Gr(1,\R^n)$ one has $\abs{\lgr(X,\Phi)},\abs{\ugr(X,\Phi)}\leq L_1L_2<\infty$, where $L_1\in\R_{>0}$ is the Lipschitz constant of the logarithm restricted to some interval as in \autoref{remark:boundedlog} and $L_2\in\R_{>0}$ is the (global) Lipschitz constant of $t\mapsto\Phi(t,s,\cdot)\in\cL(\R^n)$, cf.\ \autoref{def:linprocess}. The boundedness of growth rates for nontrivial subspaces does not hold in general when we weaken the assumed Lipschitz continuity of linear processes to absolute continuity, see \cite{Berger2009}.
\end{remark}

The following simple observations follow directly from the definition.

\begin{lemma}[{cf.\ \cite[Remark 6]{Doan2011}}]\label{lemma:gr}
Let $\Phi\in\cLP(\I,\R^n)$ and $X,Y\subseteq\R^n$ be subspaces. Then
\begin{enumerate}[(i)]
\item $X\subseteq Y$ implies $\lgr(Y,\Phi)\leq\lgr(X,\Phi)$ and $\ugr(Y,\Phi)\geq\ugr(X,\Phi)$;
\item $X\cap Y\neq\set{0}$ implies $\lgr(X,\Phi)\leq\ugr(Y,\Phi)$ and $\lgr(Y,\Phi)\leq\ugr(X,\Phi)$.
\end{enumerate}
\end{lemma}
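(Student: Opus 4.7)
The plan is to derive both statements directly from the definitions of $\lgr$ and $\ugr$ as extrema over the product sets $(X \cap \S) \times \overline{\Ineq}$ and $(Y \cap \S) \times \overline{\Ineq}$, using only the elementary monotonicity of $\min$ and $\max$ under set inclusion. Throughout I would tacitly assume that $\I$ contains at least two points, so that $\Ineq$ and hence $\overline{\Ineq}$ is nonempty; otherwise there is nothing to prove for nontrivial subspaces.

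For part (i), the key observation is that $X \subseteq Y$ implies $X \cap \S \subseteq Y \cap \S$, so the index set of the infimum defining $\lgr(X,\Phi)$ is contained in that of $\lgr(Y,\Phi)$. The minimum over the smaller set is therefore at least as large, giving $\lgr(Y,\Phi) \leq \lgr(X,\Phi)$, and symmetrically $\ugr(Y,\Phi) \geq \ugr(X,\Phi)$. The only nuance is the degenerate case: if $X = \set{0}$ the inequalities $\lgr(Y,\Phi) \leq \infty$ and $\ugr(Y,\Phi) \geq -\infty$ hold by the extension in \autoref{def:growthrate}; if $X \neq \set{0}$ then $X \subseteq Y$ forces $Y \neq \set{0}$ as well, and we are in the generic setting.

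For part (ii), the strategy is to exhibit a single vector lying simultaneously in $X \cap \S$ and $Y \cap \S$ and use it to sandwich a common value of $\Delta$ between the extrema of both subspaces. Choosing any $v \in (X \cap Y) \setminus \set{0}$ and setting $x \coloneqq v/\abs{v}$, one has $x \in X \cap \S$ and $x \in Y \cap \S$. Fix any $(t,s) \in \overline{\Ineq}$. Then by definition of the extrema,
\[
\lgr(X,\Phi) \leq \Delta(\abs{\Phi(\cdot,\tmin)x})(t,s) \leq \ugr(Y,\Phi),
\]
since the middle value is attained by an admissible pair in the minimization for $X$ and in the maximization for $Y$. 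The inequality $\lgr(Y,\Phi) \leq \ugr(X,\Phi)$ follows by interchanging the roles of $X$ and $Y$.

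Both parts are essentially definitional bookkeeping; there is no real obstacle. The only subtlety worth flagging is the trivial-subspace case, which the extension $\lgr(\set{0},\Phi) = \infty$, $\ugr(\set{0},\Phi) = -\infty$ is designed precisely to absorb, and the tacit nondegeneracy assumption on $\I$ needed to ensure that $\overline{\Ineq}$ is nonempty.
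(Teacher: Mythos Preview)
Your argument is correct and matches the paper's proof essentially line for line: part (i) by monotonicity of $\min/\max$ under enlarging the index set, and part (ii) by picking a common nonzero vector and sandwiching a value of $\Delta$ between the two extrema. The only difference is cosmetic --- you normalize the common vector and flag the degenerate cases explicitly, whereas the paper relies tacitly on \autoref{lemma:constlines} and the conventions in \autoref{def:growthrate}.
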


\begin{proof}
(i) This is clear since, with respect to $Y$, infimum and supremum are taken over a larger set, respectively.

(ii) Suppose $X\cap Y\neq\set{0}$ and let $x\in(X\cap Y)\setminus\set{0}$. Then by definition we have for any $(t,s)\in\Ineq$
\begin{align*}
\lgr(X,\Phi)&\leq\Delta(\abs{\Phi(\cdot,\tmin)x})(t,s)\leq\ugr(Y,\Phi)
\intertext{and}
\lgr(Y,\Phi)&\leq\Delta(\abs{\Phi(\cdot,\tmin)x})(t,s)\leq\ugr(X,\Phi).\qedhere
\end{align*}
\end{proof}

Next, we establish continuity for the growth rates when restricted to some $\Gr(k,\R^n)$. This extends \cite[Theorem 9]{Doan2011} and will be used as an important tool at several points in this work.

\begin{proposition}\label{prop:grc}
Let $\Phi\in\cLP(\I,\R^n)$. Then for each $k\in\set{1,\ldots,n}$, the restrictions
\begin{equation*}
\lgr(\cdot,\Phi)\bigr|_{\Gr(k,\R^n)}\quad\text{and}\quad\ugr(\cdot,\Phi)\bigr|_{\Gr(k,\R^n)}
\end{equation*}
are continuous and bounded.
\end{proposition}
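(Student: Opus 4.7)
The plan is to derive both continuity and boundedness from the fact, guaranteed by the general assumption, that the function
\begin{equation*}
g\colon\S\times\overline{\Ineq}\to\R,\quad (x,t,s)\mapsto\Delta(\abs{\Phi(\cdot,\tmin)x})(t,s),
\end{equation*}
is continuous on a compact set, hence uniformly continuous and bounded. Boundedness of $\lgr(\cdot,\Phi)$ and $\ugr(\cdot,\Phi)$ restricted to $\Gr(k,\R^n)$ is then immediate, since every value lies between $\min g$ and $\max g$. Moreover, since $X\cap\S$ is compact for each $X\in\Gr(k,\R^n)$, the extrema in \autoref{def:growthrate} are actually attained.

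For continuity I would argue sequentially. Fix $k$ and let $X_m\to X$ in $\Gr(k,\R^n)$; I treat $\lgr$, the argument for $\ugr$ being entirely analogous. To show $\limsup_{m\to\infty}\lgr(X_m,\Phi)\leq\lgr(X,\Phi)$, pick $x\in X\cap\S$ and $(t^*,s^*)\in\overline{\Ineq}$ realizing the minimum for $X$. Using the characterization of the gap topology via Hausdorff distance on unit spheres recalled in the paragraph preceding the general assumption, one obtains a sequence $x_m\in X_m\cap\S$ with $x_m\to x$; then $\lgr(X_m,\Phi)\leq g(x_m,t^*,s^*)\to g(x,t^*,s^*)=\lgr(X,\Phi)$.

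For the converse inequality $\liminf_{m\to\infty}\lgr(X_m,\Phi)\geq\lgr(X,\Phi)$, I would argue by contradiction: if a subsequence satisfied $\lgr(X_m,\Phi)\to c<\lgr(X,\Phi)$, I would pick minimizers $x_m\in X_m\cap\S$ and $(t_m,s_m)\in\overline{\Ineq}$ and, by compactness of $\S$ and of $\overline{\Ineq}$, extract a further subsequence with $x_m\to x^*\in\S$ and $(t_m,s_m)\to(t^*,s^*)\in\overline{\Ineq}$. The key step is to verify $x^*\in X$: since $\norm{\pi_{X_m}-\pi_X}=\gap(X_m,X)\to 0$ and $x_m=\pi_{X_m}(x_m)$, one has $\pi_X(x_m)\to x^*$, while by continuity of $\pi_X$ also $\pi_X(x_m)\to\pi_X(x^*)$, so that $x^*=\pi_X(x^*)\in X\cap\S$. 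Continuity of $g$ now yields $c=g(x^*,t^*,s^*)\geq\lgr(X,\Phi)$, a contradiction.

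The main obstacle is this lower semicontinuity step, specifically the verification that limits of unit vectors $x_m\in X_m$ lie in the limit subspace $X$. This is where the specific properties of the gap metric enter — it is not a formal consequence of having some topology on $\Gr(k,\R^n)$, but depends on the identification $\gap(X_m,X)=\norm{\pi_{X_m}-\pi_X}$. Everything else is a combination of compactness arguments with the uniform continuity of the logarithmic difference quotient on $\S\times\overline{\Ineq}$ granted by the general assumption.
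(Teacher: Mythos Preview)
Your proposal is correct and rests on the same two ingredients as the paper's proof --- the uniform continuity of $(x,t,s)\mapsto\Delta(\abs{\Phi(\cdot,\tmin)x})(t,s)$ on $\S\times\overline{\Ineq}$ granted by the general assumption, and the approximation properties of the gap metric --- but organizes them differently. The paper gives a direct $\eps$-$\delta$ argument: for $\gap(X,Y)<\delta/2$ it invokes \cite[p.~198, Eq.~(2.12)]{Kato1995} to produce, for the minimizer $x_1\in X\cap\S$, a nearby $\hat{x}_1\in Y\cap\S$ (and symmetrically $\hat{x}_2\in X\cap\S$ close to the minimizer $x_2\in Y\cap\S$), and then treats the two cases $\lgr(Y,\Phi)\geq\lgr(X,\Phi)$ and $\lgr(Y,\Phi)<\lgr(X,\Phi)$ by the same substitution trick. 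Your sequential argument instead splits into two semicontinuity halves: the $\limsup$ direction is essentially the paper's approximation step, while for the $\liminf$ direction you replace it by a compactness extraction together with the projection identity $x_m=\pi_{X_m}x_m$ to force the limit point into $X$. The paper's route is slightly shorter because it handles both directions symmetrically via the same lemma; your route has the virtue of making explicit where each property of the gap metric (Hausdorff approximation for one direction, operator-norm convergence of projections for the other) actually enters.
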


\begin{proof}
We prove only the lower growth rate case. Let $\eps\in\R_{>0}$. By the general assumption, we have that
\[
\S\times\overline{\Ineq}\ni(x,t,s)\mapsto\Delta(\abs{\Phi(\cdot,\tmin)x})(t,s)\in\R
\]
is uniformly continuous. Hence, there exists $\delta\in\R_{>0}$ such that for any $x_1,x_2\in\S$, $t_1,t_2,s_1,s_2\in\I$ the inequality
\[
\max\set{\abs{x_1-x_2},\abs{t_1-t_2},\abs{s_1-s_2}}<\delta
\]
implies
\[
\abs{\Delta(\abs{\Phi(\cdot,\tmin)x_1})(t_1,s_1)-\Delta(\abs{\Phi(\cdot,\tmin)x_2})(t_2,s_2)}<\eps.
\]
Now, let $X,Y\in\Gr(k,\R^n)$ with $\Theta(X,Y)<\delta/2$ and $\lgr(X,\Phi)=\Delta(\abs{\Phi(\cdot,\tmin)x_1})(t_1,s_1)$ and $\lgr(Y,\Phi)=\Delta(\abs{\Phi(\cdot,\tmin)x_2})(t_2,s_2)$, $x_1\in X\cap\S$, $x_2\in Y\cap\S$, $t_1,t_2,s_1,s_2\in\I$. By \cite[p. 198, Eq.\ (2.12)]{Kato1995} there exist $\hat{x}_1\in Y\cap\S$ and $\hat{x}_2\in X\cap\S$ such that $\abs{x_1-\hat{x}_1},\abs{x_2-\hat{x}_2}<\delta$. Finally, to prove that $\abs{\lgr(Y,\Phi)-\lgr(X,\Phi)}<\eps$, we consider two cases: if $\lgr(Y,\Phi)\geq\lgr(X,\Phi)$ we have
\begin{align*}
\abs{\lgr(Y,\Phi)-\lgr(X,\Phi)} &=\Delta(\abs{\Phi(\cdot,\tmin)x_2})(t_2,s_2)-\Delta(\abs{\Phi(\cdot,\tmin)x_1})(t_1,s_1)\\
&\leq\Delta(\abs{\Phi(\cdot,\tmin)\hat{x}_1})(t_1,s_1)-\Delta(\abs{\Phi(\cdot,\tmin)x_1})(t_1,s_1)<\eps,
\end{align*}
if $\lgr(Y,\Phi)<\lgr(X,\Phi)$ we have
\begin{align*}
\abs{\lgr(Y,\Phi)-\lgr(X,\Phi)} &=\Delta(\abs{\Phi(\cdot,\tmin)x_1})(t_1,s_1)-\Delta(\abs{\Phi(\cdot,\tmin)x_2})(t_2,s_2)\\
&\leq\Delta(\abs{\Phi(\cdot,\tmin)\hat{x}_2})(t_2,s_2)-\Delta(\abs{\Phi(\cdot,\tmin)x_2})(t_2,s_2)<\eps,
\end{align*}
which finishes the proof.
\end{proof}

For later robustness investigations the following (semi-)metric and the in\-duced topology on $\cLP(\I,\R^n)$ will turn out to be very helpful.

\begin{definition}[{Metric on $\cLP(\I,\R^n)$}]\label{def:metric}
We define $\tilde{d}_{\I},d_{\I}\colon\cLP(\I,\R^n)^2\to\R_{\geq 0}$ by
\begin{align*}
\tilde{d}_{\I}\colon(\Phi,\Psi)&\mapsto\sup_{X\in\Gr(1,\R^n)}\Bigl\{\max\set{\abs{\lgr(X,\Phi)-\lgr(X,\Psi)},\abs{\ugr(X,\Phi)-\ugr(X,\Psi)}}\Bigr\},\\
d_{\I}\colon(\Phi,\Psi)&\mapsto\max\set{\sup_{x\in\S}\norm{(\Phi(\cdot,\tmin)-\Psi(\cdot,\tmin))x}_{\infty},\tilde{d}_{\I}(\Phi,\Psi)}.
\end{align*}
Obviously, $\tilde{d}_{\I}$ is a semimetric, i.e.\ it satisfies non-negativity, symmetry and the triangle inequality (but not definiteness), and $d_{\I}$ is a metric on $\cLP(\I,\R^n)$.
\end{definition}

Note that, in general, $d_{\I}$ cannot be extended to a proper metric on the set of absolutely continuous linear processes due to possibly unbounded growth rates. Nevertheless, the (semi-)metric can be used to define open balls around absolutely continuous linear processes.

In the following, we endow $\cLP(\I,\R^n)$ with the topology induced by $d_{\I}$. By construction of the topology we obtain the continuous dependence of growth rates on the linear process.

\begin{corollary}\label{cor:grcont}
Let $X\subseteq\R^n$ be a subspace. Then $\lgr(X,\cdot),\ugr(X,\cdot)\colon\cLP(\I,\R^n)\to\R\cup\set{-\infty,\infty}$ are Lipschitz continuous with Lipschitz constant $1$.
\end{corollary}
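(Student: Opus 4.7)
The plan is to reduce the Lipschitz estimate for an arbitrary subspace $X$ to the one-dimensional case, where the bound is essentially built into the definition of $d_{\I}$. The case $X=\set{0}$ is immediate from the convention $\lgr(\set{0},\cdot)=\infty$, $\ugr(\set{0},\cdot)=-\infty$, so assume $X\in\Gr(k,\R^n)$ for some $k\in\set{1,\ldots,n}$.

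The first step is the observation that, since $X\cap\S=\bigcup_{L\in\Gr(1,X)}(L\cap\S)$ and $\Delta(\abs{\Phi(\cdot,\tmin)\cdot})(t,s)$ depends only on the line through a unit vector by \autoref{lemma:constlines}, we have
\begin{equation*}
\lgr(X,\Phi)=\inf_{L\in\Gr(1,X)}\lgr(L,\Phi)\quad\text{and}\quad\ugr(X,\Phi)=\sup_{L\in\Gr(1,X)}\ugr(L,\Phi).
\end{equation*}
This reduces the problem to obtaining the estimate on one-dimensional subspaces and then passing to the infimum, resp.\ supremum.

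The second step is trivial by definition of $\tilde{d}_{\I}$: for every $L\in\Gr(1,\R^n)$,
\begin{equation*}
\max\set{\abs{\lgr(L,\Phi)-\lgr(L,\Psi)},\abs{\ugr(L,\Phi)-\ugr(L,\Psi)}}\leq\tilde{d}_{\I}(\Phi,\Psi)\leq d_{\I}(\Phi,\Psi).
\end{equation*}
In particular, for every $L\in\Gr(1,X)$ we obtain $\lgr(L,\Phi)\leq\lgr(L,\Psi)+d_{\I}(\Phi,\Psi)$, and analogously for $\ugr$.

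The third step is the elementary stability of Lipschitz constants under infima and suprema: taking the infimum over $L\in\Gr(1,X)$ on both sides of the previous inequality, first on the left and then on the right, gives $\lgr(X,\Phi)\leq\lgr(X,\Psi)+d_{\I}(\Phi,\Psi)$, and exchanging the roles of $\Phi$ and $\Psi$ yields the absolute value. The supremum argument for $\ugr$ is symmetric.

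There is no real obstacle: the only point that requires a moment of care is making sure that the one-dimensional reduction and the inf/sup-passage are applied in the correct order, and that the degenerate case $X=\set{0}$ is treated separately using the extended definition. Continuity of $\lgr(\cdot,\Phi)\bigr|_{\Gr(k,\R^n)}$ from \autoref{prop:grc} is not needed here; it is the definition of $d_{\I}$ alone that does the work.
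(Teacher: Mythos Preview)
Your proof is correct and follows essentially the same route the paper has in mind: the corollary is stated without proof, justified only by the sentence ``By construction of the topology we obtain the continuous dependence of growth rates on the linear process.'' You have merely made explicit the one step the paper leaves implicit, namely the reduction of $\lgr(X,\cdot)$ and $\ugr(X,\cdot)$ for a $k$-dimensional $X$ to infima/suprema over $\Gr(1,X)$ so that the definition of $\tilde{d}_{\I}$ (which is stated only for one-dimensional subspaces) can be applied directly.
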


\subsection{Extremal Growth Rates}

The following notion is connected to subspaces with optimal growth rates among subspaces of the same dimension. It will play an important role in the linear spectral theory as well as in robustness issues.

\begin{definition}[{Extremal $k$-growth rates, cf.\ \cite{Berger2009,Doan2011,Doan2012}}]\label{def:extgrowthrate}
For $k\in\set{1,\ldots,n}$ we define
\begin{align*}
\elgr[k]^{\I}\colon\cLP(\I,\R^n)&\to\R, & \Phi&\mapsto\max\lgr^{\I}([\Gr(k,\R^n)],\Phi),\\
\eugr[k]^{\I}\colon\cLP(\I,\R^n)&\to\R, & \Phi&\mapsto\min\ugr^{\I}([\Gr(k,\R^n)],\Phi),
\end{align*}
and call $\elgr[k]^{\I}(\Phi)$ and $\eugr[k]^{\I}(\Phi)$, respectively, the \emph{maximal lower} and \emph{minimal upper $k$-growth rate of $\Phi$}. For convenience, we drop the index $\I$ in case the time set is clear from the context. We extend the above definition naturally by $\elgr[0]^{\I}(\Phi)=\infty$ and $\eugr[0]^{\I}(\Phi)=-\infty$.
\end{definition}

\begin{remark}\label{remark:maxsubspace}
Note that the extremal $k$-growth rate functions are well-defined due to the continuity from \autoref{prop:grc} and the compactness of $(\Gr(k,\R^n),\gap)$. That means in particular that for any $\Phi\in\cLP(\I,\R^n)$ and $k\in\set{0,\ldots,n}$ there exist $X,Y\in\Gr(k,\R^n)$ such that $\lgr(X,\Phi)=\elgr[k](\Phi)$ and $\ugr(Y,\Phi)=\eugr[k](\Phi)$. We refer to such subspaces as \emph{extremal subspaces}. Of course, in general, extremal subspaces need not be unique.
\end{remark}

\begin{lemma}[{cf.\ \cite[Remark 11]{Berger2009}, \cite[Remark 9]{Doan2011}}]\label{lemma:orderedgrowthrates}
For any $\Phi\in\cLP(\I,\R^n)$, the extremal growth rates are ordered and nested as follows:
\begin{multline*}
-\infty=\eugr[0](\Phi)<\elgr[n](\Phi)\leq\eugr[1](\Phi)\leq\elgr[n-1](\Phi)\leq\ldots\\
\ldots\leq \eugr[n-1](\Phi)\leq\elgr[1](\Phi)\leq\eugr[n](\Phi)<\elgr[0](\Phi)=\infty.
\end{multline*}
\end{lemma}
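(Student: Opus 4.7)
My plan splits the chain of inequalities into three blocks of increasing difficulty.

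First, the strict endpoint relations $\eugr[0](\Phi) < \elgr[n](\Phi)$ and $\eugr[n](\Phi) < \elgr[0](\Phi)$ follow immediately: the extended values $\eugr[0] = -\infty$ and $\elgr[0] = \infty$ are built into \autoref{def:extgrowthrate}, while $\elgr[n](\Phi) = \lgr(\R^n, \Phi)$ and $\eugr[n](\Phi) = \ugr(\R^n, \Phi)$ are finite thanks to the Lipschitz assumption on $\Phi$ together with the quantitative bound in \autoref{remark:Bohl}. For later bookkeeping I would also record the monotonicity $\elgr[k+1](\Phi) \leq \elgr[k](\Phi)$ and $\eugr[k](\Phi) \leq \eugr[k+1](\Phi)$, which are direct consequences of \autoref{lemma:gr}(i) applied to nested subspaces.

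Second, the alternating dim-sum-$(n+1)$ inequalities $\elgr[n-i](\Phi) \leq \eugr[i+1](\Phi)$ for $i = 0, \ldots, n-1$ (which include the links $\elgr[n] \leq \eugr[1]$, $\elgr[n-1] \leq \eugr[2]$, $\ldots$, $\elgr[1] \leq \eugr[n]$) follow from a single intersection argument. For any $X \in \Gr(n-i, \R^n)$ and $Y \in \Gr(i+1, \R^n)$, the sum of dimensions $n+1$ forces $X \cap Y \neq \set{0}$, and \autoref{lemma:gr}(ii) yields $\lgr(X, \Phi) \leq \ugr(Y, \Phi)$. Passing to the supremum over $X$ and the infimum over $Y$ -- both extrema attained by \autoref{prop:grc} and the compactness of the Grassmannians -- delivers the claim.

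Third, the intertwined dim-sum-$n$ inequalities $\eugr[i](\Phi) \leq \elgr[n-i](\Phi)$ for $1 \leq i \leq n-1$ are the principal obstacle: extremal pairs of complementary dimensions can be transverse, and \autoref{lemma:gr}(ii) then does not apply. Fix $(t, s) \in \overline{\Ineq}$ and observe that a vector $y \in Y \cap \S$ gives $\Delta(\abs{\Phi(\cdot, \tmin)y})(t, s) = \frac{1}{t-s}\log\bigl(\abs{\Phi(t, s)w} / \abs{w}\bigr)$ for $w = \Phi(s, \tmin)y$; since $\Phi(s, \tmin)$ is a linear bijection it permutes $\Gr(i, \R^n)$, and the Courant--Fischer characterization of singular values reduces the inner extremum over $\Gr(i, \R^n)$ at each fixed $(t,s)$ to an expression involving $\sigma_{n-i+1}(\Phi(t, s))$, and analogously the inner extremum over $\Gr(n-i, \R^n)$ to $\sigma_{n-i}(\Phi(t, s))$. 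Combining the pointwise interlacing $\sigma_{n-i+1}(\Phi(t,s)) \leq \sigma_{n-i}(\Phi(t,s))$ with the joint continuity from \autoref{lemma:Deltacont} and the compactness of $\S \times \overline{\Ineq}$ to control the interchange of extrema over subspaces with those over $(t,s)$ is the main obstacle; it is the step at which the arguments of \cite[Remark 11]{Berger2009} and \cite[Remark 9]{Doan2011} are to be adapted to the abstract framework of $\cLP(\I, \R^n)$ developed here, the general assumption made after \autoref{remark:boundedlog} providing the uniform continuity needed for the exchange to go through.
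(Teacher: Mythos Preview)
Your first two blocks are correct and coincide with the paper's (terse) proof: monotonicity of each sequence from \autoref{lemma:gr}(i), and the ``dim-sum-$(n{+}1)$'' links $\elgr[n-j]\leq\eugr[j+1]$ from \autoref{lemma:gr}(ii) via the forced nontrivial intersection.

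Your suspicion that the third block is the real obstacle is right --- but the obstacle is fatal, not merely technical. The ``dim-sum-$n$'' inequalities $\eugr[k]\leq\elgr[n-k]$ are \emph{false} in general, so no Courant--Fischer argument or exchange-of-extrema trick can establish them. For a counterexample take $n=2$, $\I=[0,2]$, and the scalar process $\Phi(t,s)=e^{f(t)-f(s)}\id_{\R^2}$ with $f(t)=t-t^2/4$, i.e.\ the solution operator of $\dot x=(1-t/2)x$; this satisfies the General Assumption since $f\in C^1$. For every $x\in\S$ one has $\Delta(\abs{\Phi(\cdot,0)x})(t,s)=(f(t)-f(s))/(t-s)$ independently of $x$, with extremal values over $\overline{\Ineq}$ equal to $f'(0)=1$ and $f'(2)=0$. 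Hence $\ugr(L,\Phi)=1$ and $\lgr(L,\Phi)=0$ for \emph{every} $L\in\Gr(1,\R^2)$, giving $\eugr[1](\Phi)=1>0=\elgr[1](\Phi)$ and violating the middle link of the displayed chain.

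The paper's one-line proof invokes only \autoref{lemma:gr}(i) and (ii); these genuinely deliver the monotonicity and the links $\elgr[n-j]\leq\eugr[j+1]$, but cannot deliver $\eugr[j]\leq\elgr[n-j]$, since that is false. The displayed chain thus appears to be misstated. The content that is both derivable from \autoref{lemma:gr} and actually used downstream (in \autoref{theo:spectrum} and for non-emptiness of the spectrum) is only that $(\eugr[k])_k$ and $(\elgr[n-k])_k$ are each nondecreasing with $\elgr[n-k]\leq\eugr[k+1]$. Your first two blocks already prove exactly this; drop the third.
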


\begin{proof}
The ordering properties follow directly from \autoref{lemma:gr}(i) and the nesting property from \autoref{lemma:gr}(ii).
\end{proof}

Another observation is the continuity of the extremal growth rate functions, which easily follows from \autoref{cor:grcont}.

\begin{lemma}\label{lemma:egrcont}
For any $k\in\set{1,\ldots,n}$ the extremal $k$-growth rate functions $\elgr[k],\eugr[k]$ are Lipschitz continuous with Lipschitz constant $1$.
\end{lemma}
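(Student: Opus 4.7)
The plan is to exploit the standard ``max/min of 1-Lipschitz functions is 1-Lipschitz'' principle, since $\elgr[k]$ and $\eugr[k]$ are defined as an extremum over $\Gr(k,\R^n)$ of the maps $\lgr(\cdot,\Phi)$ and $\ugr(\cdot,\Phi)$, and because \autoref{cor:grcont} already gives the $1$-Lipschitz dependence of $\lgr(X,\Phi)$ and $\ugr(X,\Phi)$ on $\Phi$ for each fixed subspace $X$. The only preparatory observation needed is that these extrema are actually attained, which is exactly the content of \autoref{remark:maxsubspace} (continuity from \autoref{prop:grc} together with compactness of $(\Gr(k,\R^n),\gap)$); this legitimates writing $\elgr[k](\Phi)=\lgr(X,\Phi)$ for some $X\in\Gr(k,\R^n)$.

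Concretely, for $\Phi,\Psi\in\cLP(\I,\R^n)$ and fixed $k\in\set{1,\ldots,n}$, I would pick extremal subspaces $X_\Phi,X_\Psi\in\Gr(k,\R^n)$ with $\lgr(X_\Phi,\Phi)=\elgr[k](\Phi)$ and $\lgr(X_\Psi,\Psi)=\elgr[k](\Psi)$. Then, using that $\elgr[k](\Psi)\geq\lgr(X_\Phi,\Psi)$ by the definition of the maximum,
\[
\elgr[k](\Phi)-\elgr[k](\Psi)\leq\lgr(X_\Phi,\Phi)-\lgr(X_\Phi,\Psi)\leq d_{\I}(\Phi,\Psi),
\]
where the last inequality is \autoref{cor:grcont}. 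Swapping the roles of $\Phi$ and $\Psi$ (and using $X_\Psi$) yields the reverse estimate, so $\abs{\elgr[k](\Phi)-\elgr[k](\Psi)}\leq d_{\I}(\Phi,\Psi)$. The argument for $\eugr[k]$ is completely analogous: replace $\lgr$ by $\ugr$, $\max$ by $\min$, and use the other half of \autoref{cor:grcont}.

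There is no real obstacle here beyond bookkeeping: the continuity/compactness input ensuring that extremal subspaces exist is already packaged in \autoref{remark:maxsubspace}, and the pointwise Lipschitz estimate is already packaged in \autoref{cor:grcont}. The only thing worth being slightly careful about is that one must select the extremal subspace for $\Phi$ (respectively $\Psi$) \emph{before} comparing it to the value at the other process, rather than trying to use the same $X$ for both, which would give only a one-sided bound.
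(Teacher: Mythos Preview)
Your argument is correct and is exactly the approach the paper has in mind: the paper merely says the lemma ``easily follows from \autoref{cor:grcont}'' without spelling out the details, and what you wrote is precisely the standard ``extremum of a family of $1$-Lipschitz functions is $1$-Lipschitz'' computation that makes this precise, together with \autoref{remark:maxsubspace} to guarantee the extrema are attained.
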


Note that \cite[Theorem 20]{Doan2012} is essentially the $\eps$-$\delta$-notation of the continuity established in \autoref{lemma:egrcont} for the special case that $\I$ is finite.

More interesting is the following convergence result with respect to the time-set. We denote by $\dH$ the Hausdorff metric on the space of compact subsets of $M\subseteq\R$, denoted by $\cK(M)$. The proof in \cite{Doan2012} applies also to our more general situation here.

\begin{lemma}[{\cite[Theorem 17]{Doan2012}}]\label{lemma:partialcont}
Let $\Phi\in\cLP(\I,\R^n)$. Then for each $k\in\set{1,\ldots,n}$ and any sequence $\J=\left(\J_i\right)_{i\in\N}\in\cK(\I)^{\N}$ of compact subsets of $\I$ with $\dH(\I,\J_i)\xrightarrow{i\to\infty}0$ one has
\begin{align*}
\lim_{i\to\infty} \abs{\elgr[k]^{\J_i}(\Phi\bigr|_{\J_i^2}) -\elgr[k]^{\I}(\Phi)}&=0, & \text{and} && \lim_{i\to\infty} \abs{\eugr[k]^{\J_i}(\Phi\bigr|_{\J_i^2}) -\eugr[k]^{\I}(\Phi)}&=0.
\end{align*}
\end{lemma}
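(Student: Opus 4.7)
The plan is to rewrite both extremal $k$-growth rates as extrema of a single uniformly continuous function over compact index sets that differ only through the time-set, and then to exploit Hausdorff convergence of those index sets. I abbreviate $A_i \coloneqq \set{(t,s) \in \J_i \times \J_i;~t \neq s}$, in analogy with $\Ineq$, and set $g\colon \S \times \overline{\Ineq} \to \R$, $g(x,t,s) \coloneqq \Delta(\abs{\Phi(\cdot,\tmin)x})(t,s)$, which by the General Assumption and compactness of $\S \times \overline{\Ineq}$ is uniformly continuous. The bookkeeping step re-expresses $\lgr^{\J_i}$ in terms of the same $g$: using $\Phi(\cdot,\min\J_i) = \Phi(\cdot,\tmin)\Phi(\tmin,\min\J_i)$ together with \autoref{lemma:constlines}, the value $\Delta(\abs{\Phi(\cdot,\min\J_i)x})(t,s)$ depends only on the direction of $\Phi(\tmin,\min\J_i)x$, whence
\[
\lgr^{\J_i}(X,\Phi\bigr|_{\J_i^2}) = \min\set{g(x,t,s);~x \in \tilde X \cap \S,\ (t,s) \in \overline{A_i}},\qquad \tilde X \coloneqq \Phi(\tmin,\min\J_i)[X].
\]
Since $\Phi(\tmin,\min\J_i) \in \cGL$ induces a bijection of $\Gr(k,\R^n)$, maximising over $X$ equals maximising over $\tilde X$, so
\[
\elgr[k]^{\J_i}(\Phi\bigr|_{\J_i^2}) = \max_{X \in \Gr(k,\R^n)}\ \min_{\substack{x \in X \cap \S \\ (t,s) \in \overline{A_i}}} g(x,t,s),
\]
while $\elgr[k]^{\I}(\Phi)$ is the same expression with $\overline{A_i}$ replaced by $\overline{\Ineq}$.

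The technical heart of the argument will be to show that $\dH(\overline{A_i},\overline{\Ineq}) \to 0$ in $\cK(\R^2)$. The inclusion $\overline{A_i} \subseteq \overline{\Ineq}$ is immediate, since any limit point of $\J_i$ is a limit point of $\I$. For the reverse Hausdorff direction, off-diagonal points $(t,s) \in \Ineq$ are easy: for $i$ large enough that $\dH(\J_i,\I) < \abs{t-s}/2$, nearest points $t_i,s_i \in \J_i$ to $t,s$ automatically satisfy $t_i \neq s_i$, so $(t_i,s_i) \in A_i$. Diagonal limit points $(t,t) \in \overline{\Ineq}\setminus\Ineq$ are more delicate: such a $t$ must be a limit point of $\I$, so I would pick a sequence $s_n \in \I \setminus \set{t}$ with $s_n \to t$, approximate each pair $(t,s_n) \in \Ineq$ by an element of $A_i$ via $\dH(\J_i,\I) \to 0$, and then extract a diagonal subsequence $(t_i,s_i) \in A_i$ with $(t_i,s_i) \to (t,t)$. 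This interleaving of two limits is the place where I expect the main obstacle to lie; the key input is that Hausdorff convergence $\J_i \to \I$ forces $\J_i$ to become richer near every limit point of $\I$.

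Given the set convergence, the conclusion follows from a routine compactness argument. Fix $\eps > 0$, pick $\delta > 0$ from the uniform continuity of $g$ so that $\abs{g(x,t,s)-g(x,t',s')} < \eps$ whenever $\max\set{\abs{t-t'},\abs{s-s'}} < \delta$, and choose $i_0$ with $\dH(\overline{A_i},\overline{\Ineq}) < \delta$ for $i \geq i_0$. For each $X \in \Gr(k,\R^n)$, the inclusion $\overline{A_i} \subseteq \overline{\Ineq}$ yields
\[
\min_{x \in X \cap \S,\,(t,s) \in \overline{A_i}} g(x,t,s) \geq \min_{x \in X \cap \S,\,(t,s) \in \overline{\Ineq}} g(x,t,s),
\]
while picking an optimizer $(x^*,t^*,s^*)$ for the right-hand side and approximating $(t^*,s^*)$ by some $(t',s') \in \overline{A_i}$ (keeping $x^*$ fixed) gives the reverse bound up to $\eps$. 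These estimates are uniform in $X$, so taking the maximum over the compact $\Gr(k,\R^n)$ preserves them and produces $\abs{\elgr[k]^{\J_i}(\Phi\bigr|_{\J_i^2}) - \elgr[k]^{\I}(\Phi)} \leq \eps$ for $i \geq i_0$. The argument for $\eugr[k]$ is identical after swapping the inner min and max.
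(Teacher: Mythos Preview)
The paper does not supply its own proof of this lemma; it simply remarks that ``the proof in \cite{Doan2012} applies also to our more general situation here'' and states the result. So there is nothing in the paper to compare your argument against line by line.

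Your approach is correct and self-contained. The bookkeeping via $\Phi(\cdot,\min\J_i)=\Phi(\cdot,\tmin)\Phi(\tmin,\min\J_i)$ and \autoref{lemma:constlines} legitimately reduces both extremal growth rates to a max--min (resp.\ min--max) of the single uniformly continuous function $g$ over $\Gr(k,\R^n)\times\overline{A_i}$ versus $\Gr(k,\R^n)\times\overline{\Ineq}$; the invertibility of $\Phi(\tmin,\min\J_i)$ indeed makes the passage from $X$ to $\tilde X$ a bijection of $\Gr(k,\R^n)$. The final uniform-continuity step is standard and your estimate $|\min_{\overline{A_i}}-\min_{\overline{\Ineq}}|\leq\eps$ uniformly in $X$ does transfer to the outer maximum.

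The only place your write-up is genuinely sketchy is exactly where you flag it: the Hausdorff convergence $\overline{A_i}\to\overline{\Ineq}$ at diagonal limit points. Your ``diagonal subsequence'' description is vague about uniformity in $t$. A cleaner way to close this is by contradiction and compactness: if $\dH(\overline{A_i},\overline{\Ineq})\not\to 0$, extract $(t_i,s_i)\in\overline{\Ineq}$ with $\dist((t_i,s_i),\overline{A_i})>\eps$ and pass to a convergent subsequence $(t_i,s_i)\to(t^*,s^*)$. If $t^*\neq s^*$ the off-diagonal argument applies; if $t^*=s^*$ then $t^*$ is a limit point of $\I$, so fix once and for all some $u\in\I$ with $0<|u-t^*|<\eps/4$, approximate $t^*$ and $u$ in $\J_i$, and obtain a point of $A_i$ within $\eps$ of $(t_i,s_i)$ for all large $i$. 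This avoids the interleaving-of-limits issue entirely. With that adjustment your proof is complete.
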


\section{Spectral Theory for Linear Finite-Time Processes}
\label{sec:hyperbolicity}

This section is devoted to the development of a spectral theory for linear finite-time processes, which is in the spirit of \cite{Sacker1978,Siegmund2002,Berger2009}. We start with a notion based on a certain dynamical dichotomy. We denote by $\dP(\R^n)$ the set of (linear) projections on $\R^n$. Throughout this section let $\Phi\in\cLP(\I,\R^n)$ denote a linear process on $\I$.

\subsection{Exponential Monotonicity Dichotomy}
\label{sec:emd}

\begin{definition}[Exponential monotonicity dichotomy]\label{def:EMD}
$\Phi$ admits an \emph{exponential mo\-no\-to\-ni\-city dichotomy (EMD) on $\I$} if there exists $k\in\set{0,\ldots,n}$ such that
\begin{equation}\label{eq:defhyp1}
\eugr[k](\Phi)<0<\elgr[n-k](\Phi).
\end{equation}
For brevity, we sometimes call a linear process $\Phi$ on $\I$ admitting an EMD also \emph{(finite-time) hyperbolic}. We call a hyperbolic linear process $\Phi$ on $\I$ \emph{(finite-time) attractive} if $k=n$ and \emph{(finite-time) repulsive} if $k=0$.
\end{definition}

\begin{remark}
Since the logarithmic difference quotient depends on the norm, so do the growth rates and hence the property of admitting an EMD. In general, it might be possible that a linear process on $\I$ is hyperbolic with respect to one norm, but is not with respect to another norm; see \cite[Example 14]{Berger2009}.
\end{remark}

In the next lemma, we characterize an EMD on $\I$.

\begin{lemma}[{cf.\ \cite{Berger2009}}]\label{lemma:hypgrowth}
The following statements are equivalent:
\begin{enumerate}[(i)]
\item $\Phi$ admits an EMD on $\I$ with $k\in\set{0,\ldots,n}$.
\item There exist subspaces $X\in\Gr(k,\R^n)$ and $Y\in\Gr(n-k,\R^n)$ such that $\ugr(X,\Phi)<0<\lgr(Y,\Phi)$.
\item There exists a projection $Q\in\dP(\R^n)$ with $\rk Q=k$ such that $\ugr(\im Q,\Phi)<0<\lgr(\ker Q,\Phi)$.
\item There exist a projection $Q\in\dP(\R^n)$ and constants $\alpha,\beta\in\R_{>0}$ such that for any $t,s\in\I$, $t\geq s$, and $x\in\im Q$, $y\in\ker Q$ one has
\begin{equation}
\begin{split}\label{eq:classicalhyp}
\abs{\Phi(t,\tmin)x} &\leq \e^{-\alpha(t-s)}\abs{\Phi(s,\tmin)x},\\
\abs{\Phi(t,\tmin)y} &\geq \e^{\beta(t-s)}\abs{\Phi(s,\tmin)y}.
\end{split}
\end{equation}
\end{enumerate}
\end{lemma}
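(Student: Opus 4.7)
My plan is to prove the equivalences by walking through (i)$\Leftrightarrow$(ii)$\Leftrightarrow$(iii)$\Leftrightarrow$(iv) pairwise, since each step is essentially a repackaging through a previously established tool.

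For (i)$\Leftrightarrow$(ii), I would argue directly from \autoref{def:extgrowthrate} together with \autoref{remark:maxsubspace}. If (i) holds, then by compactness of $(\Gr(k,\R^n),\gap)$ and the continuity of $\ugr(\cdot,\Phi)\bigr|_{\Gr(k,\R^n)}$ from \autoref{prop:grc}, there is an extremal $X\in\Gr(k,\R^n)$ with $\ugr(X,\Phi)=\eugr[k](\Phi)<0$; likewise an extremal $Y\in\Gr(n-k,\R^n)$ with $\lgr(Y,\Phi)=\elgr[n-k](\Phi)>0$. Conversely, if (ii) holds then $\eugr[k](\Phi)\leq\ugr(X,\Phi)<0$ and $\elgr[n-k](\Phi)\geq\lgr(Y,\Phi)>0$ by definition of the extremal growth rates as min/max.

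For (ii)$\Leftrightarrow$(iii), the nontrivial direction is (ii)$\Rightarrow$(iii): I would invoke \autoref{lemma:gr}(ii) to conclude that $X\cap Y=\set{0}$. Indeed, if there were a nonzero vector in the intersection, then \autoref{lemma:gr}(ii) would yield $\lgr(Y,\Phi)\leq\ugr(X,\Phi)$, contradicting $\ugr(X,\Phi)<0<\lgr(Y,\Phi)$. Since $\dim X+\dim Y=k+(n-k)=n$, we obtain the direct sum $\R^n=X\oplus Y$, and the projection $Q$ onto $X$ along $Y$ satisfies $\rk Q=k$, $\im Q=X$, $\ker Q=Y$, which gives (iii). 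The direction (iii)$\Rightarrow$(ii) is immediate upon setting $X\coloneqq\im Q$, $Y\coloneqq\ker Q$.

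For (iii)$\Leftrightarrow$(iv), I would apply \autoref{lemma:equivalences} pointwise along rays and then promote to uniformity using \autoref{lemma:constlines}. Assuming (iii), set $\alpha\coloneqq-\ugr(\im Q,\Phi)>0$ and $\beta\coloneqq\lgr(\ker Q,\Phi)>0$. For any $x\in\im Q\setminus\set{0}$, \autoref{lemma:constlines} reduces to $x\in\im Q\cap\S$, and by the very definition of $\ugr$ we have $\sup\{\Delta(\abs{\Phi(\cdot,\tmin)x})(t,s);\,(t,s)\in\Ineq\}\leq-\alpha<0$; the equivalence (iii)$\Leftrightarrow$(i) in \autoref{lemma:equivalences} then yields the first estimate in \eqref{eq:classicalhyp}. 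The bound for $y\in\ker Q$ follows symmetrically from the analogue stated after \autoref{lemma:equivalences}. Conversely, assuming (iv), the logarithmic difference quotient of $\abs{\Phi(\cdot,\tmin)x}$ is bounded above by $-\alpha$ on the half $\{t>s\}$ of $\Ineq$; by symmetry of $\Delta$ in its two time arguments this extends to all of $\Ineq$, and by the general assumption further to $\overline{\Ineq}$, so $\ugr(\im Q,\Phi)\leq-\alpha<0$, and analogously $\lgr(\ker Q,\Phi)\geq\beta>0$.

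I do not anticipate a serious obstacle: the only potential subtlety is ensuring that the exponential constants in (iv) are uniform in $x\in\im Q$ and $y\in\ker Q$, but this is handled for free by the fact that growth rates are defined as a sup/inf over $X\cap\S$ together with \autoref{lemma:constlines} (invariance of $\Delta$ under scaling), so the passage between the pointwise statement of \autoref{lemma:equivalences} and the uniform statement in (iv) is automatic.
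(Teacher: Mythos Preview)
Your proposal is correct and follows essentially the same route as the paper: (i)$\Leftrightarrow$(ii) via \autoref{remark:maxsubspace} and \autoref{def:extgrowthrate}, (ii)$\Leftrightarrow$(iii) by passing between a pair of complementary subspaces and a projection, and (iii)$\Leftrightarrow$(iv) via \autoref{lemma:equivalences}. In fact your (ii)$\Rightarrow$(iii) is more explicit than the paper's, which simply asserts that $X$ and $Y$ define a projection with $\im Q=X$, $\ker Q=Y$; your use of \autoref{lemma:gr}(ii) to force $X\cap Y=\set{0}$ and hence $\R^n=X\oplus Y$ supplies the justification the paper leaves implicit.
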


\begin{proof}
(i) $\Rightarrow$ (ii): By \autoref{remark:maxsubspace} there exist subspaces $X,Y$ of appropriate dimension satisfying $\ugr(X,\Phi)=\eugr[k](\Phi)<0<\elgr[k](\Phi)=\lgr(Y,\Phi)$.

(ii) $\Rightarrow$ (i): By \autoref{def:extgrowthrate}, we obtain
\begin{equation*}
\eugr[k](\Phi)\leq\ugr(X,\Phi)<0<\lgr(Y,\Phi)\leq\elgr[n-k](\Phi).
\end{equation*}

(ii) $\Leftrightarrow$ (iii): The subspaces $X$ and $Y$ define a projection by $\im Q\coloneqq X$ and $\ker Q\coloneqq Y$ and vice versa.

(iii) $\Leftrightarrow$ (iv): This follows directly from \autoref{lemma:equivalences}.
\end{proof}

Clearly, the subspaces/projections mentioned in \autoref{lemma:hypgrowth} give rise to so-called invariant projectors, for instance $P(\cdot)=\Phi(\cdot,\tmin)Q\Phi(\tmin,\cdot)$, with corresponding properties, see \cite{Berger2009,Berger2010,Berger2011,Doan2011,Doan2012}.

As can be seen easier from \autoref{lemma:hypgrowth}, \autoref{def:EMD} includes some other (hyperbolicity) notions as special cases. In the finite-time context so far only solution operators $\Phi\in\cLP(\I,\R^n)$ of linear differential equations on a compact time-interval $\I=[\tau,\tau+T]$, $\tau\in\R$, $T\in\R_{>0}$, i.e.\
\begin{equation*}
\dot{x}=A(t)x,
\end{equation*}
where usually $A\in C(\I,\cL(\R^n))$, have been considered. In this case, $\Phi$ regarded as a linear process satisfies even continuous differentiability where we required just Lipschitz continuity in \autoref{def:linprocess}. From \autoref{lemma:equivalences} and \autoref{lemma:hypgrowth} follows directly that, setting $\I=[\tau,\tau+T]$, our definition of EMD coincides with the definition of \emph{M-hyperbolicity} as defined in \cite{Berger2008-1,Doan2011}, \emph{(finite-time) hyperbolicity} as defined in \cite{Berger2009,Berger2011} and \emph{uniform hyperbolicity} as in \cite{Haller2000-1}. By setting $\I=J$ it is obvious that our definition of an EMD corresponds essentially to the finite-time hyperbolicity proposed in \cite{Doan2012}, which generalizes in particular the \emph{nonhyperbolic $(\tau,T)$-dichotomy} as suggested in \cite{Rasmussen2010}. From \autoref{lemma:equivalences} and taking into account the monotonicity preserving property of the logarithm one easily concludes that nonhyperbolic $(\tau,T)$-dichotomy is equivalent to \autoref{def:EMD} with $\I=\set{\tau,\tau+T}$. Another finite-time hyperbolicity notion of the EMD-type which is based only on the start and end time-point can be found in the finite-time Lyapunov exponent approach, see for instance \cite{Haller2001,Shadden2005}. As is shown in \cite{Doan2012} this approach is closely related to our finite-time hyperbolicity notion.

By the nesting property of the extremal growth rates, see \autoref{lemma:gr}, we have that $k\in\set{0,\ldots,n}$ in \autoref{def:EMD} is uniquely defined. Of course, this does not imply that the subspaces/projections mentioned in \autoref{lemma:hypgrowth} are unique. Consequently, the notion of EMD is well-defined, at least up to rank of the projection, in contrast to the definition of a finite-time exponential dichotomy.

In the following, we investigate for given $\Phi\in\cLP(\I,\R^n)$ the associated family of linear processes $(\Phi_{\gamma})_{\gamma\in\R}\in\cLP(\I,\R^n)^{\R}$ which is defined for any $\gamma\in\R$ and $s,t\in\I$ by:
\begin{equation}\label{eq:phigamma}
\Phi_{\gamma}(t,s) \coloneqq \e^{-\gamma(t-s)}\Phi(t,s).
\end{equation}
The motivation to study $\Phi_{\gamma}$ comes from the fact that they arise naturally in the study of linear differential equations
\begin{equation*}
\dot{x} = A(t)x,
\end{equation*}
where, for instance, $A\in C(\I,\cL(\R^n))$, $\I$ an interval, and the corresponding \emph{shifted} linear differential equations for $\gamma\in\R$
\begin{equation}\label{eq:shiftedode}
\dot{x} = (A(t)-\gamma\id_{\R^n})x.
\end{equation}
Clearly, we have by definition that for any $x\in\R^n$ and $(t,s)\in\Ineq$ holds
\begin{equation*}
\Delta(\abs{\Phi_{\gamma}(\cdot,\tmin)x})(t,s)=\Delta(\abs{\Phi(\cdot,\tmin)x})(t,s)-\gamma.
\end{equation*}
As trivial consequences of this observation we obtain the following results. Let $\Phi\in\cLP(\I,\R^n)$ and $\gamma\in\R$. $\Phi_{\gamma}$ admits an EMD on $\I$ with projection $Q\in\dP(\R^n)$ if and only if $\ugr(\im Q,\Phi)<\gamma<\lgr(\ker Q,\Phi)$. Furthermore, the extremal growth rates behave as the logarithmic difference quotient under the exponential weight, i.e.\
\begin{equation}\label{eq:shiftgrowth}
\elgr[k](\Phi_{\gamma})+\gamma=\elgr[k](\Phi)\quad\text{and}\quad\eugr[k](\Phi_{\gamma})+\gamma=\eugr[k](\Phi).
\end{equation}
Consequently, the following characterization of EMD for weighted linear processes holds.

\begin{corollary}\label{cor:shiftedhypercharact}
$\Phi_{\gamma}$ admits an EMD on $\I$ with $k\in\set{0,\ldots,n}$ if and only if $\eugr[k](\Phi)<\gamma<\elgr[n-k](\Phi)$.
\end{corollary}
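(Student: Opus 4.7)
The plan is to view this corollary as an immediate consequence of applying \autoref{def:EMD} to the weighted process $\Phi_{\gamma}$ and then rewriting the extremal growth rates of $\Phi_{\gamma}$ in terms of those of $\Phi$ via the shift identity \eqref{eq:shiftgrowth}.

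Concretely, I would first recall that by \autoref{def:EMD} applied to $\Phi_{\gamma}$, the process $\Phi_{\gamma}$ admits an EMD on $\I$ with index $k\in\{0,\ldots,n\}$ if and only if
\begin{equation*}
\eugr[k](\Phi_{\gamma}) < 0 < \elgr[n-k](\Phi_{\gamma}).
\end{equation*}
Next, I would invoke the identities in \eqref{eq:shiftgrowth}, which give $\eugr[k](\Phi_{\gamma}) = \eugr[k](\Phi) - \gamma$ and $\elgr[n-k](\Phi_{\gamma}) = \elgr[n-k](\Phi) - \gamma$. Substituting these into the above inequality and rearranging by adding $\gamma$ to each part yields the equivalent condition
\begin{equation*}
\eugr[k](\Phi) < \gamma < \elgr[n-k](\Phi),
\end{equation*}
which is precisely the claim.

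There is no real obstacle here: both directions are encoded simultaneously in the chain of equivalences, and the shift identity \eqref{eq:shiftgrowth} has already been derived from the trivial observation that $\Delta(\abs{\Phi_{\gamma}(\cdot,\tmin)x})(t,s)=\Delta(\abs{\Phi(\cdot,\tmin)x})(t,s)-\gamma$, which in turn propagates through the min/max taken in \autoref{def:growthrate} and then through the max/min over Grassmannians in \autoref{def:extgrowthrate}. Thus the corollary is really just a restatement of \autoref{def:EMD} for the shifted family, and the proof can be kept to two or three lines citing \eqref{eq:shiftgrowth} and \autoref{def:EMD}.
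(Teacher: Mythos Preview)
Your proposal is correct and matches the paper's own reasoning: the corollary is stated immediately after Eq.~\eqref{eq:shiftgrowth} as a direct consequence of applying \autoref{def:EMD} to $\Phi_{\gamma}$ and shifting the extremal growth rates by $\gamma$. There is nothing to add.
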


Since the semimetric $\tilde{d}_{\I}$ depends only on the growth rates, we get the following normalization property: $\tilde{d}_{\I}(\Phi,\Phi_{\gamma})=\gamma$.

\subsection{Spectrum of Linear Finite-Time Processes}

Next, we introduce a finite-time spectral notion which is based on the EMD.

\begin{definition}[Finite-time dichotomy spectrum, cf.\ \cite{Berger2009,Rasmussen2010,Doan2011,Doan2012}]\label{def:emd-spectrum}
We define
\begin{align*}
\spec[\I]\colon\cLP(\I,\R^n)&\to 2^{\R}, & \Phi&\mapsto\set{\gamma\in\R;~\Phi_{\gamma}\text{ does not admit an EMD on }\I},
\end{align*}
and call $\spec[\I](\Phi)$ the \emph{(finite-time dichotomy) spectrum} and $\resolv(\Phi)\coloneqq\R\setminus\spec[\I](\Phi)$ the \emph{(finite-time) resolvent set of $\Phi$}, respectively.
\end{definition}

\begin{remark}\label{remark:spectimeset}
By \autoref{def:emd-spectrum} it is clear that for two compact sets $\J,\I\subset\R$ with $\J\subseteq\I$ and $\Phi\in\cLP(\I,\R^n)$ we have that $\spec[\J](\Phi\bigr|_{\J^2})\subseteq\spec[\I](\Phi)$.
\end{remark}

\begin{theorem}[{Spectral Theorem, \cite[Theorem 10]{Doan2012}, cf.\ also \cite{Berger2009,Doan2011}}] \label{theo:spectrum}
Denote
\begin{equation*}
\set{i_0,\ldots,i_d}\coloneqq\set{j\in\set{0,\ldots,n};~\eugr[j](\Phi)<\elgr[n-j](\Phi)},~i_k<i_{k+1},~k\in\set{0,\ldots,d-1}.
\end{equation*}
Then the spectrum of $\Phi$ is the union of $d$ disjoint compact intervals, i.e.\
\begin{equation*}
\spec[\I](\Phi) = \bigcup_{k=1}^d [\elgr[n-i_{k-1}](\Phi),\eugr[i_k](\Phi)],
\end{equation*}
and we call $[\elgr[n-i_{k-1}](\Phi),\eugr[i_k](\Phi)]$ the \emph{$k$-th spectral interval}.
\end{theorem}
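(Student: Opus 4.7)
The plan is to rewrite the resolvent set as a disjoint union of open intervals in $\R$ and then read off the spectrum as the complement. The key input is \autoref{cor:shiftedhypercharact}: $\gamma\in\resolv(\Phi)$ precisely when there exists some $k\in\set{0,\ldots,n}$ with $\eugr[k](\Phi)<\gamma<\elgr[n-k](\Phi)$. Consequently,
\[
\resolv(\Phi)=\bigcup_{k=0}^{n}\bigl(\eugr[k](\Phi),\elgr[n-k](\Phi)\bigr),
\]
and the $k$-th open interval on the right is nonempty precisely when $\eugr[k](\Phi)<\elgr[n-k](\Phi)$, i.e.\ when $k\in\set{i_0,\ldots,i_d}$.

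The next step is to verify that the nonempty intervals are pairwise disjoint and actually form the connected components of the resolvent. For two consecutive indices $i_k<i_{k+1}$ we have $i_{k+1}\geq i_k+1$, so by the nesting in \autoref{lemma:orderedgrowthrates}
\[
\elgr[n-i_k](\Phi)\leq\eugr[i_k+1](\Phi)\leq\eugr[i_{k+1}](\Phi),
\]
which forces the intervals $(\eugr[i_k],\elgr[n-i_k])$ and $(\eugr[i_{k+1}],\elgr[n-i_{k+1}])$ to be disjoint; moreover, by construction $\eugr[i_k]<\elgr[n-i_k]$ for each $k$. Since $\eugr[0](\Phi)=-\infty<\elgr[n](\Phi)$ and $\eugr[n](\Phi)<\elgr[0](\Phi)=+\infty$ (both strict by the finiteness of $\elgr[n],\eugr[n]$ discussed in \autoref{remark:Bohl} and the definitional extensions), the indices $0$ and $n$ lie in $\set{i_0,\ldots,i_d}$, hence $i_0=0$ and $i_d=n$. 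Thus the first resolvent interval is unbounded below and the last is unbounded above.

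It then remains to pass to the complement. From the disjointness above, $\R\setminus\resolv(\Phi)$ decomposes into the $d$ closed intervals separating consecutive resolvent components, namely
\[
\spec[\I](\Phi)=\bigcup_{k=1}^{d}\bigl[\elgr[n-i_{k-1}](\Phi),\eugr[i_k](\Phi)\bigr],
\]
with each interval well-defined by the inequality $\elgr[n-i_{k-1}]\leq\eugr[i_k]$ from step two, and with pairwise disjointness coming from the strict gaps $\eugr[i_k]<\elgr[n-i_k]$. Compactness of each interval is immediate.

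The main obstacle is purely organizational: one has to keep track of which inequalities in \autoref{lemma:orderedgrowthrates} are needed to ensure that consecutive open resolvent intervals cannot touch, and to check that the indexing convention $i_0=0$, $i_d=n$ turns the complement into exactly $d$ closed intervals rather than $d+1$ or $d-1$. No new analytic ingredients beyond \autoref{cor:shiftedhypercharact} and \autoref{lemma:orderedgrowthrates} are required.
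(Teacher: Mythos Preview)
Your proposal is correct and follows essentially the same route as the paper: use \autoref{cor:shiftedhypercharact} to write the resolvent as the union of the open intervals $(\eugr[k](\Phi),\elgr[n-k](\Phi))$, discard the empty ones, and invoke the nesting from \autoref{lemma:orderedgrowthrates} to identify the complement. The paper's proof is simply a terser version of yours; the only difference is that you spell out explicitly why $i_0=0$ and $i_d=n$ and why consecutive resolvent intervals are disjoint, whereas the paper leaves this implicit in its appeal to \autoref{lemma:orderedgrowthrates}.
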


\begin{proof}
This follows directly from \autoref{cor:shiftedhypercharact}, i.e.\ from the fact that
\begin{equation*}
\resolv[\I](\Phi) = \bigcup_{k=0}^n (\eugr[k](\Phi),\elgr[n-k](\Phi)) = \bigcup_{k=0}^d (\eugr[i_k](\Phi),\elgr[n-i_k](\Phi)),
\end{equation*}
since $(\eugr[k](\Phi),\elgr[n-k](\Phi))=\varnothing$ for $k\in\set{0,\ldots,n}\setminus\set{i_0,\ldots,i_d}$, and the nesting property of the extremal growth rates stated in \autoref{lemma:orderedgrowthrates}.
\end{proof}

The assertion of \autoref{theo:spectrum} remains basically true if we require only absolute continuity of $\Phi$. However, in this case, the left-most and right-most spectral intervals may be unbounded; see also the remark after \autoref{def:growthrate} and \cite[Theorem 17]{Berger2009}. \autoref{theo:spectrum} together with \autoref{lemma:orderedgrowthrates} implies that $\spec[\I](\Phi)$ is non-empty for any $\Phi\in\cLP(\I,\R^n)$.

Due to the simple interval-structure of the spectrum where the endpoints of the intervals are the extremal growth rates, we obtain the new result of continuous dependence of $\spec[\I]$ on the linear process from \autoref{lemma:egrcont}.

\begin{proposition}\label{prop:spectrumcont}
The spectrum function $\spec[\I]\colon\left(\cLP(\I,\R^n),d_{\I}\right)\to\left(\cK(\R),\dH\right)$ is Lip\-schitz continuous with Lipschitz constant $1$.
\end{proposition}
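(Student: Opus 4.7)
The plan is to establish the Hausdorff bound through the two symmetric inclusions that characterize $\dH$: writing $\eps \coloneqq d_{\I}(\Phi,\Psi)$, it suffices to show $\dist(\gamma,\spec[\I](\Psi))\leq\eps$ for every $\gamma\in\spec[\I](\Phi)$ (and the reverse by symmetry). The key observation is that by \autoref{theo:spectrum} the resolvent set $\resolv[\I](\Psi)$ is a disjoint union of open intervals whose endpoints are precisely the extremal growth rates $\eugr[k](\Psi),\elgr[n-k](\Psi)$, and by \autoref{lemma:egrcont} these endpoints move Lipschitz-continuously (with constant $1$) when $\Phi$ is perturbed to $\Psi$.

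The proof proceeds by contradiction. Fix $\gamma\in\spec[\I](\Phi)$ and assume $\dist(\gamma,\spec[\I](\Psi))>\eps$. Since $\spec[\I](\Psi)$ is compact, this means the closed interval $[\gamma-\eps,\gamma+\eps]$ is disjoint from $\spec[\I](\Psi)$, hence contained in $\resolv[\I](\Psi)$. Because $[\gamma-\eps,\gamma+\eps]$ is connected and $\resolv[\I](\Psi)$ is a disjoint union of open intervals, the whole closed interval must lie in a single component $(\eugr[k](\Psi),\elgr[n-k](\Psi))$ for some $k\in\set{0,\ldots,n}$. In particular,
\begin{equation*}
\eugr[k](\Psi) < \gamma - \eps \quad\text{and}\quad \gamma + \eps < \elgr[n-k](\Psi).
\end{equation*}

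Applying \autoref{lemma:egrcont} yields $\eugr[k](\Phi)\leq\eugr[k](\Psi)+\eps<\gamma$ and $\elgr[n-k](\Phi)\geq\elgr[n-k](\Psi)-\eps>\gamma$. By \autoref{cor:shiftedhypercharact} this means $\Phi_{\gamma}$ admits an EMD on $\I$, so $\gamma\in\resolv[\I](\Phi)$, contradicting $\gamma\in\spec[\I](\Phi)$. Swapping $\Phi$ and $\Psi$ gives the reverse inclusion, and therefore $\dH(\spec[\I](\Phi),\spec[\I](\Psi))\leq\eps=d_{\I}(\Phi,\Psi)$.

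There is no real obstacle here, since the Lipschitz continuity of the individual extremal growth rates has been done in \autoref{lemma:egrcont} and the interval structure of the spectrum is already provided by \autoref{theo:spectrum}. The only point that needs care is the strict-versus-non-strict accounting of inequalities: this is handled by using the closed $\eps$-ball $[\gamma-\eps,\gamma+\eps]$ (not the open one) in the contradiction hypothesis $\dist(\gamma,\spec[\I](\Psi))>\eps$, which provides the slack needed so that the non-strict Lipschitz bounds on the endpoints still yield strict inequalities placing $\gamma$ in the resolvent of $\Phi$. Note that no tracking of how spectral intervals split or merge under perturbation is required — the connectedness argument finesses that entirely.
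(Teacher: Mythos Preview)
Your argument is correct and follows exactly the route the paper intends: the proposition is stated there without proof, as an immediate consequence of the interval structure of the spectrum (\autoref{theo:spectrum}) together with the Lipschitz-$1$ continuity of the extremal growth rates (\autoref{lemma:egrcont}), and your contradiction/connectedness argument spells out precisely those details. The handling of the strict inequalities via the closed $\eps$-interval is clean and the invocation of \autoref{cor:shiftedhypercharact} at the end is the right way to close the loop.
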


Additionally, we obtain a kind of continuity result of the spectrum with respect to the time-set, corresponding to \autoref{lemma:partialcont}.

\begin{corollary}[{\cite[Theorem 17]{Doan2012}}]\label{cor:contintime}
For any $\left(\J_i\right)_{i\in\N}\in\cK(\I)^{\N}$ one has
\begin{equation*}
\lim_{i\to\infty} \dH(\I,\J_i)=0~\Rightarrow~\lim_{i\to\infty} \dH(\spec[\I](\Phi),\spec[\J_i](\Phi\bigr|_{\J_i^2}))=0.
\end{equation*}
\end{corollary}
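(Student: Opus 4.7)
The plan is to combine the convergence of the extremal growth rates from \autoref{lemma:partialcont} with the structural description of the spectrum in \autoref{theo:spectrum}, verifying the two one-sided Hausdorff distances separately.

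One direction is immediate: by \autoref{remark:spectimeset}, $\spec[\J_i](\Phi\bigr|_{\J_i^2})\subseteq\spec[\I](\Phi)$ for every $i$, hence $\sup_{\gamma'\in\spec[\J_i](\Phi\bigr|_{\J_i^2})}\dist(\gamma',\spec[\I](\Phi))=0$ with no limit argument needed.

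For the reverse estimate, I would fix $\eps>0$ and invoke \autoref{lemma:partialcont} finitely many times to choose $N$ so that, for all $i\geq N$ and all $k\in\set{0,\ldots,n}$, both $|\eugr[k]^{\J_i}(\Phi\bigr|_{\J_i^2})-\eugr[k]^{\I}(\Phi)|<\eps$ and $|\elgr[k]^{\J_i}(\Phi\bigr|_{\J_i^2})-\elgr[k]^{\I}(\Phi)|<\eps$. Writing $R_k^{\I}\coloneqq(\eugr[k]^{\I}(\Phi),\elgr[n-k]^{\I}(\Phi))$ and analogously $R_k^{\J_i}$, the proof of \autoref{theo:spectrum} exhibits the resolvents as $\bigcup_{k=0}^n R_k^{\I}$ and $\bigcup_{k=0}^n R_k^{\J_i}$, and \autoref{lemma:orderedgrowthrates} forces both to be pairwise disjoint unions of open intervals (some possibly empty). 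Given $\gamma\in\spec[\I](\Phi)$, either $\gamma\in\spec[\J_i](\Phi\bigr|_{\J_i^2})$ (trivial) or $\gamma$ lies in a unique $R_k^{\J_i}$. In the latter case, $\gamma\notin R_k^{\I}$ forces $\gamma\leq\eugr[k]^{\I}(\Phi)$ or $\gamma\geq\elgr[n-k]^{\I}(\Phi)$, and the chain
\begin{equation*}
\eugr[k]^{\J_i}(\Phi\bigr|_{\J_i^2})<\gamma\leq\eugr[k]^{\I}(\Phi)<\eugr[k]^{\J_i}(\Phi\bigr|_{\J_i^2})+\eps
\end{equation*}
(or its mirror at the right endpoint) gives $|\gamma-\eugr[k]^{\J_i}(\Phi\bigr|_{\J_i^2})|<\eps$.

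The main subtlety will be to confirm that the endpoint used as witness, say $\eugr[k]^{\J_i}(\Phi\bigr|_{\J_i^2})$, genuinely lies in $\spec[\J_i](\Phi\bigr|_{\J_i^2})$, so that the $\eps$-bound is a true distance estimate to the spectrum and not merely to an arbitrary point of $\R$. This is where the nested ordering of \autoref{lemma:orderedgrowthrates} is indispensable: that value is bounded above by each $\eugr[l]^{\J_i}$ with $l>k$ and bounded below by each $\elgr[n-l]^{\J_i}$ with $l<k$, so it can lie strictly inside no $R_l^{\J_i}$, while in $R_k^{\J_i}$ itself it is merely the open left endpoint. Thus $\eugr[k]^{\J_i}\in\spec[\J_i]$ (and analogously $\elgr[n-k]^{\J_i}\in\spec[\J_i]$), and combining the two subcases yields $\sup_{\gamma\in\spec[\I](\Phi)}\dist(\gamma,\spec[\J_i](\Phi\bigr|_{\J_i^2}))<\eps$ uniformly for $i\geq N$, which closes the Hausdorff convergence.
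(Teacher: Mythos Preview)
Your proof is correct and follows the same route the paper indicates: the paper's own proof is the one-liner ``This follows from \autoref{theo:spectrum} and \autoref{lemma:partialcont},'' and what you have written is precisely a careful unpacking of that sentence. Your verification that the witness endpoints $\eugr[k]^{\J_i}$ and $\elgr[n-k]^{\J_i}$ actually lie in $\spec[\J_i]$ (via the nesting in \autoref{lemma:orderedgrowthrates}) is the one nontrivial detail the paper leaves implicit, and you handle it correctly, including the degenerate cases $k=0$ and $k=n$ where only one of the two endpoints is finite.
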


\begin{proof}
This follows from \autoref{theo:spectrum} and \autoref{lemma:partialcont}.
\end{proof}

\subsection{Robustness of Hyperbolicity}
\label{sec:perturbation}

A desired property of hyperbolicity is its robustness under perturbations, which we establish in a very natural way and for the first time in such a general setting.

\begin{theorem}[Robustness of EMD]\label{theo:robusthyp}
Let $\Phi$ admit an EMD on $\I$ with $k\in\set{0,\ldots,n}$. Then any $\Psi\in\cLP(\I,\R^n)$ with $\tilde{d}_{\I}(\Phi,\Psi)<\min\set{-\eugr[k](\Phi),\elgr[n-k](\Phi)}$ admits an EMD on $\I$ with $k$. For any such perturbation $\Psi$, the extremal subspaces of $\Phi$ can be chosen for the definition of the EMD-projection.
\end{theorem}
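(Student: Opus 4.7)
The plan is to show that the extremal subspaces of $\Phi$ themselves witness the EMD for any sufficiently close perturbation $\Psi$. By \autoref{remark:maxsubspace}, I would pick $X \in \Gr(k, \R^n)$ and $Y \in \Gr(n-k, \R^n)$ realising $\ugr(X, \Phi) = \eugr[k](\Phi) < 0$ and $\lgr(Y, \Phi) = \elgr[n-k](\Phi) > 0$. The goal is to verify that $\ugr(X, \Psi) < 0 < \lgr(Y, \Psi)$; once this is done, \autoref{lemma:hypgrowth}(ii) delivers the EMD for $\Psi$ with the same $k$, and the projection $Q$ with $\im Q = X$ and $\ker Q = Y$ (well-defined because $\dim X + \dim Y = n$ and $X \cap Y = \set{0}$ by \autoref{lemma:gr}(ii) applied to $\Psi$ in the form: any nonzero vector in $X \cap Y$ would force $\lgr(Y, \Psi) \leq \ugr(X, \Psi) < 0$) is a valid EMD-projection, exactly as asserted by the last sentence of the statement.

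The key intermediate step is the decomposition of $\ugr$ and $\lgr$ on an arbitrary subspace $Z \subseteq \R^n$ into contributions from 1-dimensional subspaces: by \autoref{lemma:constlines} together with \autoref{def:growthrate},
\[
\ugr(Z, \Phi) = \sup_{L \in \Gr(1,\R^n),\, L \subseteq Z} \ugr(L, \Phi), \qquad \lgr(Z, \Phi) = \inf_{L \in \Gr(1,\R^n),\, L \subseteq Z} \lgr(L, \Phi),
\]
since the logarithmic difference quotient depends only on the line spanned by its nonzero argument. Combining this with the elementary sup/inf difference estimate yields, for any subspace $Z$,
\[
\abs{\ugr(Z, \Phi) - \ugr(Z, \Psi)} \leq \tilde{d}_{\I}(\Phi, \Psi), \qquad \abs{\lgr(Z, \Phi) - \lgr(Z, \Psi)} \leq \tilde{d}_{\I}(\Phi, \Psi),
\]
which is the sharpened semimetric version of the Lipschitz property recorded in \autoref{cor:grcont}.

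Applying these two bounds to $Z = X$ and $Z = Y$, respectively, and using the assumption $\tilde{d}_{\I}(\Phi, \Psi) < \min\set{-\eugr[k](\Phi), \elgr[n-k](\Phi)}$, I obtain
\[
\ugr(X, \Psi) \leq \eugr[k](\Phi) + \tilde{d}_{\I}(\Phi, \Psi) < 0, \qquad \lgr(Y, \Psi) \geq \elgr[n-k](\Phi) - \tilde{d}_{\I}(\Phi, \Psi) > 0,
\]
which closes the proof. The only nontrivial ingredient is the passage to 1-dimensional subspaces in the first display, which is precisely what couples $\tilde{d}_{\I}$ (defined via $\Gr(1, \R^n)$) to growth rates on subspaces of arbitrary dimension; everything else is a dimension count and a short chain of inequalities, so I do not anticipate any real obstacle.
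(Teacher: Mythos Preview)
Your proposal is correct and follows essentially the same route as the paper, which also argues that the growth rates of the extremal subspaces of $\Phi$ cannot cross $0$ under the given perturbation bound. The only difference is cosmetic: the paper invokes \autoref{cor:grcont} (and \autoref{lemma:egrcont}) directly, whereas you spell out the underlying reduction to $1$-dimensional growth rates that makes $\tilde d_{\I}$ control $\lvert\ugr(Z,\Phi)-\ugr(Z,\Psi)\rvert$ and $\lvert\lgr(Z,\Phi)-\lgr(Z,\Psi)\rvert$ for arbitrary subspaces $Z$.
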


\begin{proof}
This is a simple consequence of \autoref{lemma:egrcont} and \autoref{def:metric}, i.e.\ the fact that the extremal growth rates of $\Phi$ and $\Psi$ differ at most as much as $\Phi$ and $\Psi$ do. For any admissible perturbation in the sense of the theorem, the respective growth rates of the subspaces realizing the extremal growth rates (w.r.t.\ $\Phi$) do not cross $0$.
\end{proof}

\begin{corollary}
For given $\I$ the set
\begin{equation*}
\set{\Phi\in\cLP(\I,\R^n);~\Phi\text{ admits an EMD on }\I}
\end{equation*}
is open in $\cLP(\I,\R^n)$ with respect to the topology induced by $d_{\I}$.
\end{corollary}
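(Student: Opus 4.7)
The plan is to derive the statement as a direct packaging of the Robustness Theorem \autoref{theo:robusthyp} together with the definition of the metric $d_{\I}$ from \autoref{def:metric}. Concretely, I will fix an arbitrary $\Phi$ in the set under consideration, exhibit an explicit positive radius $r>0$, and show that the open $d_{\I}$-ball $B(\Phi,r)$ is entirely contained in the set.

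First, given $\Phi\in\cLP(\I,\R^n)$ admitting an EMD on $\I$, let $k\in\set{0,\ldots,n}$ be the associated integer from \autoref{def:EMD}, so that $\eugr[k](\Phi)<0<\elgr[n-k](\Phi)$. Define
\[
r\coloneqq\min\set{-\eugr[k](\Phi),\elgr[n-k](\Phi)}.
\]
Both arguments of the minimum are strictly positive by hyperbolicity, hence $r>0$.

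Next, observe that by the very construction in \autoref{def:metric}, the semimetric $\tilde{d}_{\I}$ is dominated by $d_{\I}$; that is, $\tilde{d}_{\I}(\Phi,\Psi)\leq d_{\I}(\Phi,\Psi)$ for all $\Phi,\Psi\in\cLP(\I,\R^n)$. Consequently, any $\Psi\in\cLP(\I,\R^n)$ with $d_{\I}(\Phi,\Psi)<r$ satisfies $\tilde{d}_{\I}(\Phi,\Psi)<r=\min\set{-\eugr[k](\Phi),\elgr[n-k](\Phi)}$. By \autoref{theo:robusthyp}, such a $\Psi$ admits an EMD on $\I$ with the same integer $k$, and hence lies in the set. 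This exhibits $B(\Phi,r)$ as a $d_{\I}$-open neighbourhood of $\Phi$ inside the set, proving openness.

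There is essentially no obstacle: the content is entirely carried by \autoref{theo:robusthyp}, and the only observation beyond that is the trivial inequality $\tilde{d}_{\I}\leq d_{\I}$ built into \autoref{def:metric}, which ensures that the (larger) metric $d_{\I}$ controls the (weaker) quantity $\tilde{d}_{\I}$ that appears in the hypothesis of the robustness theorem.
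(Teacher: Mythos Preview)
Your proof is correct and follows exactly the intended route: the paper states this corollary without an explicit proof, treating it as an immediate consequence of \autoref{theo:robusthyp}, and your argument supplies precisely the obvious details (choosing the explicit radius and invoking $\tilde{d}_{\I}\leq d_{\I}$).
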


Next, we are going to show that the estimate in \autoref{theo:robusthyp} is sharp and hence gives the maximal perturbation bound for EMD persistence. This can be interpreted as a hyperbolicity radius or, in case $\Phi$ is an attractive linear process, as a stability radius.

\begin{theorem}[Hyperbolicity radius]\label{theo:stabradiusprocess}
Let $\Phi$ admit an EMD on $\I$. Then
\begin{equation*}
\theta\coloneqq\dist(\set{0},\spec[\I](\Phi))\in\R_{>0}
\end{equation*}
is the largest number such that any $\Psi\in\cLP(\I,\R^n)$ with $\tilde{d}_{\I}(\Phi,\Psi)<\theta$ admits an EMD on $\I$.
\end{theorem}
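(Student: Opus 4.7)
The plan is to identify $\theta$ with the perturbation bound already appearing in \autoref{theo:robusthyp}, so the "sufficient" direction is immediate, and then to certify sharpness by exhibiting one explicit non-hyperbolic process at distance exactly $\theta$ via an exponential shift of $\Phi$.

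First I would make $\theta$ explicit. By \autoref{def:EMD} and the nesting in \autoref{lemma:orderedgrowthrates}, there is a unique $k\in\set{0,\ldots,n}$ with $\eugr[k](\Phi)<0<\elgr[n-k](\Phi)$. The Spectral Theorem \autoref{theo:spectrum} shows that $0$ lies in the resolvent gap $(\eugr[k](\Phi),\elgr[n-k](\Phi))$ and that $\spec[\I](\Phi)$ is closed, so the distance is attained at one of the two endpoints:
\[
\theta=\min\set{-\eugr[k](\Phi),\elgr[n-k](\Phi)}>0.
\]
This is exactly the perturbation bound of \autoref{theo:robusthyp}, hence every $\Psi\in\cLP(\I,\R^n)$ with $\tilde{d}_{\I}(\Phi,\Psi)<\theta$ admits an EMD on $\I$, which is the "lower bound" half of the theorem.

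For sharpness I would rule out any larger number by producing a single $\Psi$ at distance exactly $\theta$ that fails hyperbolicity. By compactness of $\spec[\I](\Phi)$, one of $\pm\theta$ belongs to $\spec[\I](\Phi)$; call it $\gamma^{*}$. Set $\Psi\coloneqq\Phi_{\gamma^{*}}$ as in~\eqref{eq:phigamma}. The identity $\Delta(\abs{\Phi_{\gamma}(\cdot,\tmin)x})=\Delta(\abs{\Phi(\cdot,\tmin)x})-\gamma$ gives $\lgr(X,\Phi_{\gamma^{*}})=\lgr(X,\Phi)-\gamma^{*}$ and the analogous equation for $\ugr$, so \autoref{def:metric} yields $\tilde{d}_{\I}(\Phi,\Psi)=\abs{\gamma^{*}}=\theta$, matching the normalization property recorded after \autoref{cor:shiftedhypercharact}. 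On the other hand, $\gamma^{*}\in\spec[\I](\Phi)$ means by \autoref{def:emd-spectrum} that $\Phi_{\gamma^{*}}=\Psi$ admits no EMD on $\I$. Consequently, for every $\theta'>\theta$ this $\Psi$ satisfies $\tilde{d}_{\I}(\Phi,\Psi)=\theta<\theta'$ without being hyperbolic, so $\theta'$ fails the stated property and $\theta$ is maximal.

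No step is genuinely hard; the main conceptual point is recognising that the one-parameter family $(\Phi_{\gamma})_{\gamma\in\R}$ is a straight line in $(\cLP(\I,\R^n),\tilde{d}_{\I})$ on which $\tilde{d}_{\I}(\Phi,\cdot)$ equals $\abs{\gamma}$ and which intersects the non-hyperbolic set precisely at the spectrum. Once this is noted, the proof reduces to assembling the Spectral Theorem, the normalization property of $\tilde{d}_{\I}$ under exponential shifts, and the robustness estimate of \autoref{theo:robusthyp}.
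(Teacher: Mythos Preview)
Your proposal is correct and follows essentially the same route as the paper: apply \autoref{theo:robusthyp} for the lower bound and exhibit a weighted process $\Phi_{\gamma^{*}}$ at $\tilde{d}_{\I}$-distance $\theta$ that lies in the spectrum to prove sharpness. If anything, your version is slightly more careful than the paper's, which writes $\Phi_{\theta}$ and ``$\theta\in\spec[\I](\Phi)$'' without distinguishing the sign, whereas you correctly pick $\gamma^{*}\in\set{\pm\theta}\cap\spec[\I](\Phi)$.
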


\begin{proof}
The fact that $\theta$ is a number with the asserted property is clear by \autoref{theo:robusthyp}, thus it remains to show that it is the largest one. This is indeed clear by the normalization property of $\tilde{d}_{\I}$, i.e.\ $\tilde{d}_{\I}(\Phi,\Phi_{\theta})=\theta$ and that $\Phi_{\theta}$ does not admit an EMD on $\I$. The latter is due to the fact that $\theta\in\spec[\I](\Phi)$.
\end{proof}

\section{Linearization of Finite-Time Processes}
\label{sec:linearization}

In this section we study $C^1$-processes $\phi$ on $\I$ and their linearization $\Phi$ along fixed trajectories. In particular, we are interested in local implications of finite-time hyperbolicity of $\Phi$ on the the original process $\phi$. Throughout this section let $\phi\in\cP(\I,\R^n)$ denote a $C^1$-process on $\I$.

\subsection{Linearization and Hyperbolicity of Finite-Time Processes}

Motivated by the classical theory, we introduce the following notion.

\begin{definition}[Linearization]\label{def:linearization}
Let $x\in\R^n$. We define
\begin{align*}
\Phi_{(\tmin,x)}\colon\I\times\I&\to\cL(\R^n), & (t,s)&\mapsto\Phi_{(\tmin,x)}(t,s)\coloneqq\partial_2\phi(t,s,\phi(s,\tmin,x)).
\end{align*}
and call $\Phi_{(\tmin,x)}$ the \emph{linearization of $\phi$ along $\phi(\cdot,\tmin,x)$}.
\end{definition}

\begin{lemma}
For any $x\in\R^n$ the function $\Phi_{(\tmin,x)}$ as defined in \autoref{def:linearization} is a linear process on $\I$.
\end{lemma}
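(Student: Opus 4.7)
The plan is to verify in turn each axiom of a linear invertible process from \autoref{def:linprocess} for the function $\Phi_{(\tmin,x)}$, the algebraic content being supplied by the chain rule applied to the cocycle identity for $\phi$. First, by construction $\Phi_{(\tmin,x)}(t,s)=\partial_2\phi(t,s,\phi(s,\tmin,x))$ is a linear endomorphism of $\R^n$, so $\Phi_{(\tmin,x)}(t,s,\cdot)\in\cL(\R^n)$; and since $\phi(t,t,\cdot)=\id_{\R^n}$ we get $\partial_2\phi(t,t,y)=\id_{\R^n}$ for every $y\in\R^n$, hence $\Phi_{(\tmin,x)}(t,t)=\id_{\R^n}$.

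Next I would establish the cocycle property. Differentiating the identity $\phi(t,s,\cdot)\circ\phi(s,r,\cdot)=\phi(t,r,\cdot)$ at a point $y\in\R^n$ via the chain rule yields
\[
\partial_2\phi(t,s,\phi(s,r,y))\circ\partial_2\phi(s,r,y)=\partial_2\phi(t,r,y).
\]
Specializing to $y=\phi(r,\tmin,x)$ and invoking the cocycle property of $\phi$ itself, namely $\phi(s,r,\phi(r,\tmin,x))=\phi(s,\tmin,x)$, the left-hand side collapses to $\Phi_{(\tmin,x)}(t,s)\circ\Phi_{(\tmin,x)}(s,r)$ while the right-hand side becomes $\Phi_{(\tmin,x)}(t,r)$, as required. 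Invertibility is then immediate: setting $r=t$ in the cocycle relation gives $\Phi_{(\tmin,x)}(t,s)\Phi_{(\tmin,x)}(s,t)=\id_{\R^n}$, so $\Phi_{(\tmin,x)}(t,s)\in\cGL$ for all $t,s\in\I$.

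For the Lipschitz continuity in the first argument, the $C^1$-process hypothesis states precisely that $t\mapsto\partial_2\phi(t,s,y)\in\cL(\R^n)$ is Lipschitz continuous in the operator norm for each fixed $(s,y)$; applying this with $y=\phi(s,\tmin,x)$ yields that $t\mapsto\Phi_{(\tmin,x)}(t,s,\cdot)$ is Lipschitz continuous in the operator norm, which is the linear process regularity required by \autoref{def:linprocess}.

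The main obstacle is the joint continuity of $(t,s,y)\mapsto\Phi_{(\tmin,x)}(t,s)y$ on $\I\times\I\times\R^n$, which is needed to place $\Phi_{(\tmin,x)}$ in $\cP(\I,\R^n)$ in the first place. The $C^1$-hypothesis provides $C^1$-smoothness in the state variable for each fixed $(t,s)$ together with Lipschitz continuity in $t$, and one needs joint continuity of $\partial_2\phi$ in $(t,s,y)$ (implicit in the intended reading of $C^1$-process); once this is in hand, continuity of $s\mapsto\phi(s,\tmin,x)$ from \autoref{lemma:uniformcontspace} and composition give continuity of $(t,s)\mapsto\Phi_{(\tmin,x)}(t,s)$ in operator norm on the compact set $\I\times\I$. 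The standard splitting
\[
\abs{\Phi_{(\tmin,x)}(t_1,s_1)y_1-\Phi_{(\tmin,x)}(t_2,s_2)y_2}\leq\norm{\Phi_{(\tmin,x)}(t_1,s_1)-\Phi_{(\tmin,x)}(t_2,s_2)}\abs{y_1}+\norm{\Phi_{(\tmin,x)}(t_2,s_2)}\abs{y_1-y_2}
\]
combined with the boundedness of $\norm{\Phi_{(\tmin,x)}(\cdot,\cdot)}$ on $\I\times\I$ (by continuity plus compactness) then upgrades this to joint continuity as a function to $\R^n$, completing the verification that $\Phi_{(\tmin,x)}\in\cLP(\I,\R^n)$.
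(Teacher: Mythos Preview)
Your proposal is correct and follows the same route as the paper: the algebraic axioms (identity, cocycle, invertibility) come from differentiating the cocycle identity for $\phi$ via the chain rule, and the Lipschitz regularity in the first variable is read off directly from the $C^1$-process definition. The paper's proof is a two-sentence sketch of exactly this.

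Your last paragraph goes beyond the paper in a useful way. The paper's proof says only that ``Lipschitz continuity of $\Phi_{(\tmin,x)}$ holds by definition'' and does not separately address the joint continuity of $(t,s,y)\mapsto\Phi_{(\tmin,x)}(t,s)y$ needed to make $\Phi_{(\tmin,x)}$ a process in the sense of \autoref{def:linprocess}. You correctly observe that the stated $C^1$-process hypothesis (per-fiber $C^1$ smoothness plus Lipschitz continuity of $t\mapsto\partial_2\phi(t,s,y)$ for each fixed $(s,y)$) does not literally yield joint continuity of $\partial_2\phi$; one must read that into the definition. Your splitting argument then cleanly reduces joint continuity of the evaluation map to continuity of $(t,s)\mapsto\Phi_{(\tmin,x)}(t,s)$ in operator norm plus compactness of $\I\times\I$. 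This is a genuine clarification of a point the paper leaves implicit.
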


\begin{proof}
The cocycle properties including the invertibility are easily checked with the cocycle properties of $\phi$ and the chain rule of differentiation. Lipschitz continuity of $\Phi_{(\tmin,x)}$ holds by definition.
\end{proof}

\begin{definition}[Finite-time hyperbolicity, attraction and repulsion]
Let $x_0\in\R^n$ and $\Phi_{(\tmin,x_0)}\in\cLP(\I,\R^n)$ be the linearization of $\phi$ along $\phi(\cdot,\tmin,x_0)$. We call $\phi(\cdot,\tmin,x_0)$ \emph{(finite-time) hyperbolic} if $\Phi_{(\tmin,x_0)}$ admits an EMD on $\I$. We call $\phi(\cdot,\tmin,x_0)$ \emph{(finite-time) attractive/repulsive} if $\Phi_{(\tmin,x_0)}$ is attractive/repulsive.
\end{definition}

For an extensive study of finite-time attractivity with respect to the two-point time-set $\I=\set{\tmin,\tmax}$ we refer to \cite{Giesl2012}. To get a better geometrical understanding of the finite-time behavior of trajectories close to some reference trajectory we introduce the following cones, i.e.\ sets that are invariant under scalar multiplication.

\begin{definition}[{Stable/unstable cone, cf.\ \cite[p. 11]{Doan2011}}]\label{def:cones}
We define
\begin{align*}
\Vs\colon\cLP(\I,\R^n)&\to2^{\R^n}, & \Phi&\mapsto\bigcup\set{X\in\Gr(1,\R^n);~\ugr(X,\Phi)<0},\\
\Vu\colon\cLP(\I,\R^n)&\to2^{\R^n}, & \Phi&\mapsto\bigcup\set{X\in\Gr(1,\R^n);~\lgr(X,\Phi)>0},
\end{align*}
where we call $\Vs(\Phi)$ and $\Vu(\Phi)$ the \emph{stable} and \emph{unstable cone of $\Phi$}, respectively.
\end{definition}

Clearly, we can equivalently consider $\Vs(\Phi)$ and $\Vu(\Phi)$ as subsets of $\Gr(1,\R^n)$ (by not taking the union in \autoref{def:cones}). In the following, we identify the set of points lying in a $1$-dimensional subspace $X\subseteq\R^n$ with the point $X\in\Gr(1,\R^n)$. Consequently, we obtain
\begin{equation}\label{eq:identification}
V = \bigcup\set{X\in\Gr(1,\R^n);~X\subseteq V} \cong \set{X\in\Gr(1,\R^n);~X\subseteq V} = \Gr(1,V),
\end{equation}
and in that sense we can speak of a subspace being compact. By the continuity of the growth rate functions for fixed $\Phi$ proved in \autoref{prop:grc}, we obtain the following result.

\begin{lemma}\label{lemma:opencones}
The stable and unstable cones of $\Phi$ are open subsets of $\Gr(1,\R^n)$.
\end{lemma}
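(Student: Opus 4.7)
The plan is to reduce the statement directly to the continuity result established in \autoref{prop:grc}. Using the identification \eqref{eq:identification}, we may view the stable and unstable cones as the subsets
\[
\cVs(\Phi) \coloneqq \set{X\in\Gr(1,\R^n);~\ugr(X,\Phi)<0} \quad\text{and}\quad \cVu(\Phi) \coloneqq \set{X\in\Gr(1,\R^n);~\lgr(X,\Phi)>0}
\]
of $\Gr(1,\R^n)$, and we need to show these are open in the gap topology.

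First, I would invoke \autoref{prop:grc} applied with $k=1$, which gives that the maps
\[
\ugr(\cdot,\Phi)\bigr|_{\Gr(1,\R^n)}\colon\Gr(1,\R^n)\to\R \quad\text{and}\quad \lgr(\cdot,\Phi)\bigr|_{\Gr(1,\R^n)}\colon\Gr(1,\R^n)\to\R
\]
are continuous (with respect to the gap metric $\gap$ on the source). Then I observe that $\cVs(\Phi)$ is precisely the preimage of the open set $(-\infty,0)\subset\R$ under the continuous function $\ugr(\cdot,\Phi)\bigr|_{\Gr(1,\R^n)}$, and that $\cVu(\Phi)$ is the preimage of $(0,\infty)$ under the continuous function $\lgr(\cdot,\Phi)\bigr|_{\Gr(1,\R^n)}$. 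Hence both sets are open.

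There is no real obstacle here: the essential analytic work has already been done in establishing \autoref{prop:grc} (which in turn rested on the general assumption guaranteeing uniform continuity of the logarithmic difference quotient on $\S\times\overline{\Ineq}$). The only subtlety worth mentioning explicitly is the identification \eqref{eq:identification}, which legitimates passing between the description of the cones as unions of $1$-dimensional subspaces in $\R^n$ and as subsets of $\Gr(1,\R^n)$; topologically it is the latter description that carries the gap-metric structure in which openness is to be interpreted.
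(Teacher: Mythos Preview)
Your argument is correct and matches the paper's approach exactly: the paper simply states that the lemma follows from the continuity of the growth rate functions established in \autoref{prop:grc}, and you have spelled out precisely how (the cones are preimages of open half-lines under these continuous maps on $\Gr(1,\R^n)$). Your remark on the identification \eqref{eq:identification} is also appropriate, as the paper invokes the same point just before stating the lemma.
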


The next proposition generalizes \cite[Theorem 14]{Doan2011} by (partially) removing a differentiability assumption on the norm, cf.\ \autoref{remark:generalassumption}.

\begin{proposition}[{cf.\ \cite[Theorem 14]{Doan2011}}]\label{prop:manitocone}
The following statements hold:
\begin{enumerate}[(i)]
\item If $\Phi$ admits an EMD on $\I$ with $Q\in\dP(\R^n)$, then
\begin{align*}
\im Q &\subseteq\Vs(\Phi), & \text{and} && \ker Q &\subseteq\Vu(\Phi).
\end{align*}
\item Suppose there exist subspaces $U_1\subseteq\Vs(\Phi)$ and $U_2\subseteq\Vu(\Phi)$ with $U_1\oplus U_2 = \R^n$. Then $\Phi$ admits an EMD with projection $Q$ defined by $\im Q = U_1$ and $\ker Q = U_2$.
\end{enumerate}
\end{proposition}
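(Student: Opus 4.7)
For part (i), the claim is essentially a direct application of the monotonicity of growth rates under subspace inclusion. Assume $\Phi$ admits an EMD with projection $Q$. By \autoref{lemma:hypgrowth}(iii) we have $\ugr(\im Q,\Phi)<0$. For any nonzero $x\in\im Q$, set $X\coloneqq\linhull\set{x}\in\Gr(1,\R^n)$. Then $X\subseteq\im Q$, so by \autoref{lemma:gr}(i) one obtains $\ugr(X,\Phi)\leq\ugr(\im Q,\Phi)<0$, which means $X\in\Vs(\Phi)$ under the identification \eqref{eq:identification}. Hence $x\in X\subseteq\Vs(\Phi)$, so $\im Q\subseteq\Vs(\Phi)$. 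The corresponding inclusion $\ker Q\subseteq\Vu(\Phi)$ follows in the same way using $\lgr(\ker Q,\Phi)>0$.

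For part (ii), the strategy is to upgrade the pointwise, line-by-line, strict inequalities defining the cones to a uniform strict inequality on the whole subspace $U_i$, and then invoke \autoref{lemma:hypgrowth}. The key observation is the following reformulation of the extremal-over-sphere definition in \autoref{def:growthrate}: for any subspace $Y\subseteq\R^n$, \autoref{lemma:constlines} gives
\begin{equation*}
\ugr(Y,\Phi)=\max_{X\in\Gr(1,Y)}\ugr(X,\Phi),\qquad \lgr(Y,\Phi)=\min_{X\in\Gr(1,Y)}\lgr(X,\Phi),
\end{equation*}
since $X\cap\S$ is (up to sign, which is irrelevant by \autoref{lemma:constlines}) a singleton. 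Now $\Gr(1,U_1)$ is a closed subset of the compact space $(\Gr(1,\R^n),\Theta)$, hence compact, and by \autoref{prop:grc} the function $\ugr(\cdot,\Phi)\bigr|_{\Gr(1,\R^n)}$ is continuous. The assumption $U_1\subseteq\Vs(\Phi)$ means precisely that $\ugr(X,\Phi)<0$ for every $X\in\Gr(1,U_1)$; continuity and compactness therefore yield
\begin{equation*}
\ugr(U_1,\Phi)=\max_{X\in\Gr(1,U_1)}\ugr(X,\Phi)<0.
\end{equation*}
Analogously, $\lgr(U_2,\Phi)>0$.

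With these bounds in hand, define $Q\in\dP(\R^n)$ by $\im Q\coloneqq U_1$ and $\ker Q\coloneqq U_2$; this is well-defined because $U_1\oplus U_2=\R^n$. Setting $k\coloneqq\dim U_1$ (so $\dim U_2=n-k$), the above inequalities are exactly condition (iii) of \autoref{lemma:hypgrowth}, so $\Phi$ admits an EMD on $\I$ with projection $Q$.

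The only step that requires care is upgrading the fibre-wise strict inequality "every line in $U_i$ belongs to the cone" to the subspace-level strict inequality on $\ugr(U_i,\Phi)$ or $\lgr(U_i,\Phi)$; without the general assumption guaranteeing continuity (\autoref{prop:grc}), or without the compactness of $\Gr(1,U_i)$, one would only get nonstrict inequalities. Both ingredients are available here, so this is where the heart of the argument lies, and the rest is bookkeeping via \autoref{lemma:hypgrowth}.
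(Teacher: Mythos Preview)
Your proof is correct and follows essentially the same approach as the paper: part (i) unwinds the definitions via \autoref{lemma:gr}(i), and part (ii) uses continuity of the growth rate functions (\autoref{prop:grc}) together with compactness of $\Gr(1,U_i)$ to pass from line-wise strict inequalities to the subspace-level ones required by \autoref{lemma:hypgrowth}. The paper states this more tersely, but the substance is identical.
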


\begin{proof}
(i): This follows directly from \autoref{def:EMD} and \autoref{def:cones}.\\
(ii): We need to show that $\ugr(U_1,\Phi)<0<\lgr(U_2,\Phi)$. This is clear by the continuity of the growth rate functions from \autoref{prop:grc} and the compactness of $U_1$ and $U_2$ considered as subsets of $\Gr(1,\R^n)$.
\end{proof}

\begin{remark}\label{remark:definitegrowth}
From the proof of part (ii) we can read off directly that the supremum (infimum) over upper (lower) growth rates with respect to an arbitrary compact subset of $\Vs(\Phi)$ ($\Vu(\Phi)$), considered as subsets of $\Gr(1,\R^n)$, gives a negative (positive) number.
\end{remark}

Finite-time local stable and unstable manifolds are studied for different problem classes in \cite{Haller1998,Haller2000-1,Haller2001,Berger2009}. In these works, the manifolds are introduced as manifolds in the extended phase space, depending on some non-unique extension of a given differential equation on some compact time-interval to the whole real line. The obtained manifolds have, for any given extension, indeed a $C^1$-manifold structure. In \cite[Definition 35]{Duc2008} for two-dimensional ODEs so-called stable and unstable manifolds are introduced, which do not have a manifold structure. However, these objects are defined in an ``intrinsic'' way, by requiring some decay and growth property of trajectories on the compact time-interval with respect to a reference trajectory, respectively. In \cite[Definition 3.1]{Giesl2012} domains of attraction are introduced intrinsically with a decay requirement with respect to the two-point time-set $\I=\set{\tmin,\tmax}$. We take this as a motivation for the next definition.

\begin{definition}[Domains of finite-time attraction/repulsion]
Let $y\in\R^n$. Then we define
\begin{align*}
\Ws_y&\coloneqq\set{x\in\R^n\setminus\set{y};\sup_{(t,s)\in\Ineq}\set{\Delta(\abs{\phi(\cdot,\tmin,x)-\phi(\cdot,\tmin,y)})(t,s)}<0}\cup\set{y},\\
\Wu_y&\coloneqq\set{x\in\R^n\setminus\set{y};\inf_{(t,s)\in\Ineq}\set{\Delta(\abs{\phi(\cdot,\tmin,x)-\phi(\cdot,\tmin,y)})(t,s)}>0}\cup\set{y},
\end{align*}
and call $\Ws_y$ and $\Wu_y$ the \emph{domains of (finite-time) attraction} and \emph{repulsion} with respect to $\phi(\cdot,\tmin,y)$, respectively.
\end{definition}

\begin{remark}
We call $\Ws_y$ and $\Wu_y$ domains of attraction and repulsion, respectively, to emphasize their set structure and to avoid terms like manifold or cone.
\end{remark}

It is easy to see that under a time-dependent coordinate shift $(t,x)\mapsto(t,x-\phi(t,\tmin,y))$ the linearization $\Phi_{(\tmin,y)}$ (along $\phi(\cdot,\tmin,y)$ in the original coordinates) coincides with the linearization $\tilde{\Phi}_{(\tmin,0)}$ (along $\I\times\set{0}$ in the transformed coordinates). Without loss of generality we assume the reference trajectory to be the zero trajectory for the rest of this section.

As a next step, we prove that hyperbolic trajectories have, under some condition on the approximation quality of the process by the linearization, non-empty domains of attraction and repulsion. We are going to show that locally cones and domains look very similar, which is a result of local persistence and we adapt the reasoning that led to the robustness of EMD to the nonlinear case. To this end, we first introduce analogues to the $1$-dimensional growth rates as follows:
\begin{align}
\underline{\mu}\colon\R^n\setminus\set{0}\times\cP(\I,\R^n) &\to\R, & (x,\phi)&\mapsto\inf_{(t,s)\in\Ineq}\Delta(\abs{\phi(\cdot,\tmin,x)})(t,s),\\
\overline{\mu}\colon\R^n\setminus\set{0}\times\cP(\I,\R^n) &\to\R, & (x,\phi)&\mapsto\sup_{(t,s)\in\Ineq}\Delta(\abs{\phi(\cdot,\tmin,x)})(t,s).
\end{align}
Note that for $\Phi\in\cLP(\I,\R^n)$ and $x\in\R^n\setminus\set{0}$ we have $\underline{\mu}(x,\Phi)=\lgr(\linhull\set{x},\Phi)$ and $\overline{\mu}(x,\Phi)=\ugr(\linhull\set{x},\Phi)$. Based on $\underline{\mu}$ and $\overline{\mu}$ we introduce a measure of approximation of the $C^1$-process $\phi$ by the linearization $\Phi$ along the zero reference trajectory
\begin{subequations}\label{eq:semimetric}
\begin{gather}
m\colon\R_{\geq 0}\to\R_{\geq 0},\\
\eta\mapsto
\begin{cases}
0, &\eta=0,\\
\sup_{x\in B[0,\eta]\setminus\set{0}}\max\set{\abs{\underline{\mu}(x,\phi)-\underline{\mu}(x,\Phi)},\abs{\overline{\mu}(x,\phi)-\overline{\mu}(x,\Phi)}}, &\text{otherwise.}
\end{cases}
\end{gather}
\end{subequations}
With this notation at hand, the domains of attraction and repulsion of the zero reference trajectory take the simple form
\begin{align*}
\Ws_0&\coloneqq\set{x\in\R^n\setminus\set{0};~\overline{\mu}(x,\phi)<0}\cup\set{0},\\
\Wu_0&\coloneqq\set{x\in\R^n\setminus\set{0};~\underline{\mu}(x,\phi)>0}\cup\set{0},
\end{align*}
from which the similarity to the stable and unstable cones of \autoref{def:cones} becomes already visible. Roughly speaking, stable and unstable cones correspond to, respectively, stable and unstable subspaces in the infinite-time situation, while domains of attraction and repulsion correspond to local stable and unstable manifolds. The classical Local Stable Manifold Theorem states that, for instance, at a hyperbolic trajectory, there exists a local stable manifold which is tangent to the stable subspace at the trajectory. For the finite-time context, the essential question of this section is to find the relation between cones and domains. The main result \autoref{theo:stabcone} will state much more than just tangency.

Before we prove the main results, we give a sufficient condition for $m$ to be continuous at $0$.

\begin{lemma}\label{lemma:contmdisc}
Let $\Phi\in\cLP(\I,\R^n)$ be the linearization of $\phi$ along $\phi(\cdot,\tmin,0)=0$. If $\I$ is finite then the function $m$ as in Eq.\ \eqref{eq:semimetric} is continuous at $0$.
\end{lemma}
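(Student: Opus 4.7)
I plan to reduce the claim $m(\eta)\to 0$ as $\eta\to 0^+$ to the uniform-in-$t$ convergence $\ln\abs{\phi(t,\tmin,x)}-\ln\abs{\Phi(t,\tmin)x}\to 0$ as $\abs{x}\to 0$. Finiteness of $\I$ will enter in only two places: to make the $o(\abs{x})$-remainder in the Taylor expansion of $\phi$ at $0$ uniformly small in $t\in\I$, and to keep $\abs{t-s}$ bounded away from $0$ on $\Ineq$.

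For the reduction, I observe that for any $x\in\R^n\setminus\set{0}$ and $(t,s)\in\Ineq$,
\[
\Delta(\abs{\phi(\cdot,\tmin,x)})(t,s) - \Delta(\abs{\Phi(\cdot,\tmin)x})(t,s) = \frac{g(t,x)-g(s,x)}{t-s},\quad g(t,x)\coloneqq\ln\frac{\abs{\phi(t,\tmin,x)}}{\abs{\Phi(t,\tmin)x}}.
\]
The elementary estimates $\abs{\inf f-\inf h},\abs{\sup f-\sup h}\leq\sup\abs{f-h}$ then yield, with $\tau\coloneqq\min_{(t',s')\in\Ineq}\abs{t'-s'}>0$ (the minimum exists since $\Ineq$ is finite),
\[
\max\set{\abs{\underline{\mu}(x,\phi)-\underline{\mu}(x,\Phi)},\abs{\overline{\mu}(x,\phi)-\overline{\mu}(x,\Phi)}} \leq \frac{2}{\tau}\sup_{t\in\I}\abs{g(t,x)}.
\]
Consequently $m(\eta)\leq(2/\tau)\sup\set{\abs{g(t,x)};~t\in\I,~0<\abs{x}\leq\eta}$, and everything reduces to showing that this supremum tends to $0$.

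For the uniform supremum bound, note that $\phi(t,\tmin,0)=0$ and $\Phi(t,\tmin)=\partial_2\phi(t,\tmin,0)$ by \autoref{def:linearization}, so the $C^1$-assumption on $\phi$ gives, for each $t\in\I$, an expansion $\phi(t,\tmin,x)=\Phi(t,\tmin)x+r(t,x)$ with $\abs{r(t,x)}/\abs{x}\to 0$ as $x\to 0$. Given $\eps>0$, I pick $\delta_t(\eps)>0$ with $\abs{r(t,x)}\leq\eps\abs{x}$ for $\abs{x}\leq\delta_t(\eps)$; since $\I$ is finite, $\delta(\eps)\coloneqq\min_{t\in\I}\delta_t(\eps)>0$ makes this estimate uniform in $t$. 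Invertibility of $\Phi(t,\tmin)$ gives $\abs{\Phi(t,\tmin)x}\geq c\abs{x}$ with $c\coloneqq(\max_{t\in\I}\norm{\Phi(t,\tmin)^{-1}})^{-1}>0$, again by finiteness of $\I$. Combining, for $0<\abs{x}\leq\delta(\eps)$ one has
\[
\Bigl\lvert\frac{\abs{\phi(t,\tmin,x)}}{\abs{\Phi(t,\tmin)x}}-1\Bigr\rvert \leq \frac{\abs{r(t,x)}}{\abs{\Phi(t,\tmin)x}} \leq \frac{\eps}{c}
\]
uniformly in $t\in\I$, and continuity of $\ln$ at $1$ converts this into $\sup_{t\in\I}\abs{g(t,x)}\to 0$ uniformly as $\abs{x}\to 0$, completing the reduction.

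The only real obstacle is the uniformity in $t$ of the $o(\abs{x})$-remainder; it is trivialised here by finiteness of $\I$ (a minimum over finitely many $\delta_t$), which is precisely why the hypothesis cannot be dropped by this argument and why an analogous statement on intervals would demand an equicontinuity or uniform-differentiability input not available from the $C^1$-process definition alone.
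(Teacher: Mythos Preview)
Your proof is correct and follows essentially the same route as the paper: both reduce to showing that $\ln\bigl(\abs{\phi(t,\tmin,x)}/\abs{\Phi(t,\tmin)x}\bigr)\to 0$ uniformly in $t\in\I$ as $\abs{x}\to 0$, using finiteness of $\I$ to bound $\abs{t-s}$ away from zero on $\Ineq$ and then controlling the Taylor remainder $r(t,x)=\phi(t,\tmin,x)-\Phi(t,\tmin)x$. If anything, your argument is slightly more careful than the paper's: you make explicit that the needed uniformity is $\abs{r(t,x)}/\abs{x}\to 0$ (obtained by taking the minimum of finitely many $\delta_t$) together with the lower bound $\abs{\Phi(t,\tmin)x}\geq c\abs{x}$, whereas the paper only records $\abs{R(t,x)}\to 0$ uniformly in $t$, which on its own would not force the ratio to $1$.
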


\begin{proof}
By the inverse triangle inequality it suffices to prove
\begin{equation*}
D(t,s,x) \coloneqq \abs{\Delta(\abs{\phi(\cdot,\tmin,x)})(t,s)-\Delta(\abs{\Phi(\cdot,\tmin)x})(t,s)} \xrightarrow{\abs{x}\to 0}0,
\end{equation*}
uniformly in $(t,s)\in\Ineq$. Since $\I$ is finite we have $M\coloneqq\min_{(t,s)\in\Ineq}\abs{t-s}>0$. Observe that
\begin{equation*}
R(t,x)\coloneqq\phi(t,\tmin,x)-\partial_2\phi(t,\tmin,0)x=\phi(t,\tmin,x)-\Phi(t,\tmin)x
\end{equation*}
is continuous and
\begin{equation}\label{eq:Runiform}
\abs{R(t,x)}\xrightarrow{\abs{x}\to 0}0,\quad\text{ uniformly in }t\in\I.
\end{equation}
We estimate
\begin{align*}
D(t,s,x) &= \abs{\frac{\ln\abs{\phi(t,\tmin,x)}-\ln\abs{\phi(s,\tmin,x)}}{t-s} - \frac{\ln\abs{\Phi(t,\tmin)x}-\ln\abs{\Phi(s,\tmin)x}}{t-s}}\\
&\leq \frac{1}{M}\left(\abs{\ln\frac{\abs{\phi(t,\tmin,x)}}{\abs{\Phi(t,\tmin)x}}}+\abs{\ln\frac{\abs{\phi(s,\tmin)x}}{\abs{\Phi(s,\tmin,x)}}}\right)\\
&= \frac{1}{M}\left(\abs{\ln\frac{\abs{\Phi(t,\tmin)x+R(t,x)}}{\abs{\Phi(t,\tmin)x}}}+\abs{\ln\frac{\abs{\Phi(s,\tmin)x+R(s,x)}}{\abs{\Phi(s,\tmin)x}}}\right)\\
&\xrightarrow{\abs{x}\to 0}\frac{1}{M}(\ln 1+\ln 1)=0,
\end{align*}
uniformly in $(t,s)\in\Ineq$ by \autoref{lemma:uniformconttime} and Eq.\ \eqref{eq:Runiform}.
\end{proof}

Note that in the previous lemma we did not impose any extra regularity conditions neither on the norm nor on the process. The next lemma gives sufficient conditions for the ODE case, which requires both additional regularity of the norm and of the process. We state and prove it for the Euclidean norm, making use of the following facts: the Euclidean norm is continuously differentiable on $\R^n\setminus\set{0}$, and the modulus of continuity of its derivative when restricted to a compact domain $D$ not containing the origin is $\omega(t)=t/\alpha$, where $\alpha\coloneqq\min\set{\abs{x};~x\in D}$. The following result can be clearly transferred to other norms by requiring continuous differentiability and a certain behavior of the modulus of continuity of the derivative close to the origin, which will become clear in the course of the proof.

\begin{lemma}\label{lemma:contmcont}
Let $\I$ be a compact interval, $\abs{\cdot}$ denote the Euclidean norm, $\phi\in\cP(\I,\R^n)$ be a $C^2$-process on $\I$ and $\Phi\in\cLP(\I,\R^n)$ be the linearization of $\phi$ along $\phi(\cdot,\tmin,0)=0$. Suppose that
\begin{enumerate}[(i)]
\item $\phi$ and $\Phi$ are continuously differentiable in the first argument, and
\item for $R(t,x)\coloneqq\phi(t,\tmin,x)-\Phi(t,\tmin)x$, $t\in\I$, $x\in\R^n$, one has that $R(t,\cdot)$ together with $\partial_0R(t,\cdot)$ is of class $O(\abs{x}^2)$ for $\abs{x}\to 0$ uniformly in $t\in\I$.
\end{enumerate}
Then the function $m$ as in Eq. \eqref{eq:semimetric} is continuous at $0$.
\end{lemma}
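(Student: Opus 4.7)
The plan is to extend the reasoning of \autoref{lemma:contmdisc} to the interval case, where the obstruction $M=\min_{(t,s)\in\Ineq}\abs{t-s}$ now vanishes, so that $D(t,s,x)$ can no longer be controlled simply by pointwise closeness of $\ln\abs{\phi(\cdot,\tmin,x)}$ and $\ln\abs{\Phi(\cdot,\tmin)x}$ divided by $\abs{t-s}$. Instead, under the $C^1$-in-time assumptions and with the Euclidean norm, the scalar functions $t\mapsto\ln\abs{\phi(t,\tmin,x)}$ and $t\mapsto\ln\abs{\Phi(t,\tmin)x}$ are themselves $C^1$ on $\I$, provided $\abs{x}$ is so small that both trajectories remain bounded away from $0$. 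I would then represent the logarithmic difference quotient as a time average of its derivative,
\[
\Delta(\abs{\phi(\cdot,\tmin,x)})(t,s) = \frac{1}{t-s}\int_s^t \frac{d}{d\tau}\ln\abs{\phi(\tau,\tmin,x)}\d\tau,
\]
and analogously for $\Phi$. Then $D(t,s,x)$ is dominated by the supremum over $\tau\in\I$ of the pointwise difference of the two derivatives, so the problem reduces to showing that this pointwise difference tends to $0$ uniformly in $\tau$ as $\abs{x}\to 0$.

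The first step will be to establish a uniform lower bound $\abs{\Phi(t,\tmin)x}\geq c\abs{x}$ on $\I$, which follows from the invertibility of $\Phi(t,\tmin)$ together with $\sup_{t\in\I}\norm{\Phi(\tmin,t)}<\infty$ (cf.\ \autoref{lemma:uniformconttime}), and then $\abs{\phi(t,\tmin,x)}\geq(c/2)\abs{x}$ for $\abs{x}$ sufficiently small by exploiting $R(t,x)=O(\abs{x}^2)$. This keeps both logarithms well-defined and $C^1$ in $\tau$. The second step is to differentiate the Euclidean-norm logarithms explicitly,
\[
\frac{d}{d\tau}\ln\abs{\Phi(\tau,\tmin)x} = \frac{\langle\Phi(\tau,\tmin)x,\partial_0\Phi(\tau,\tmin)x\rangle}{\abs{\Phi(\tau,\tmin)x}^2},
\]
and analogously for $\phi$, with $\Phi(\tau,\tmin)x$ replaced by $\Phi(\tau,\tmin)x+R(\tau,x)$ and $\partial_0\Phi(\tau,\tmin)x$ by $\partial_0\Phi(\tau,\tmin)x+\partial_0R(\tau,x)$.

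The third, decisive step is an order-of-magnitude comparison of the two quotients. Since $\Phi(\tau,\tmin)x$ and $\partial_0\Phi(\tau,\tmin)x$ are of order $\abs{x}$ uniformly in $\tau\in\I$ while $R(\tau,x)$ and $\partial_0R(\tau,x)$ are of order $\abs{x}^2$, the two numerators and the two denominators agree up to terms of order $\abs{x}^3$. By the lower bound from the first step, both denominators are of order at least $\abs{x}^2$, so the difference of the quotients is of order $\abs{x}$ uniformly in $\tau\in\I$. Combining this with the integral representation yields $D(t,s,x)\leq C\abs{x}$ uniformly in $(t,s)\in\Ineq$, whence $m(\eta)\leq C\eta$ for small $\eta>0$, and continuity of $m$ at $0$ follows.

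I expect the main obstacle to be the careful bookkeeping in the third step: one has to check that the $O(\abs{x}^3)$ corrections from $R$ and $\partial_0R$ in both numerator and denominator combine, after division, into an $O(\abs{x})$ error, and this relies essentially on the sharp lower bound $\abs{\Phi(\tau,\tmin)x}\geq c\abs{x}$ together with the $O(\abs{x}^2)$ estimates for both $R$ and $\partial_0R$. The role of the Euclidean-norm hypothesis is to furnish the explicit $C^1$ formula $\nabla\ln\abs{v}=v/\abs{v}^2$; as the remark preceding the lemma suggests, the argument adapts to any norm whose derivative has a suitable modulus of continuity near $0$.
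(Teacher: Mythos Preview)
Your proposal is correct and follows essentially the same strategy as the paper: both reduce the comparison of logarithmic difference quotients to a uniform-in-$\tau$ bound on the difference of the time-derivatives $\tfrac{d}{d\tau}\ln\abs{\phi(\tau,\tmin,x)}$ and $\tfrac{d}{d\tau}\ln\abs{\Phi(\tau,\tmin)x}$ (the paper via the mean value theorem applied to $\tau\mapsto\ln\tfrac{\abs{\phi(\tau,\tmin,x)}}{\abs{\Phi(\tau,\tmin)x}}$, you via the integral average), and then show this derivative difference tends to zero as $\abs{x}\to 0$. The only minor difference is in the execution of that estimate: the paper frames it through the modulus of continuity of $\abs{\cdot}'$ on a shrinking annulus (which is what makes the extension to other smooth norms transparent), whereas your explicit inner-product order-of-magnitude bookkeeping yields the same $O(\abs{x})$ bound more directly for the Euclidean case.
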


\begin{proof}
As in the previous lemma, we show that $D(t,s,x)\to 0$ as $\abs{x}\to 0$ uniformly in $(t,s)\in\Ineq$, this time by applying the mean value theorem to the continuously differentiable function $t\mapsto\ln\frac{\abs{\phi(t,\tmin,x)}}{\abs{\Phi(t,\tmin)x}}$. To this end, note that for $R(t,x)\coloneqq\phi(t,\tmin,x)-\Phi(t,\tmin)x$, $t\in\I$, $x\in\R^n$, we have that $R(t,\cdot)$ together with $\partial_0R(t,\cdot)$ is of class $o(\abs{x})$ for $\abs{x}\to 0$ uniformly in $t\in\I$ due to the twice continuous differentiability of $\phi$. Besides elementary calculations and estimates, the crucial ingredient of the proof is to show that
\begin{equation}\label{eq:Rapprox}
\norm{\abs{\cdot}'\bigl(\Phi(t,\tmin)x+R(t,x)\bigr)-\abs{\cdot}'\bigl(\Phi(t,\tmin)x\bigr)}\xrightarrow{\abs{x}\to 0}0,
\end{equation}
uniformly in $t\in\I$. To show this, we first observe that there exist constants $\eps,\delta\in\R_{>0}$ such that $\abs{R(t,x)}\leq\eps\abs{x}^2\leq\eps\delta\abs{x}$ whenever $\abs{x}\leq\delta$, due to the aforementioned observation on the convergence of $R$. Since $\Phi$ is invertible and $\I$ is compact, we have that $\alpha\coloneqq\min\norm{\Phi([\I],\tmin)[\S]},\beta\coloneqq\max\norm{\Phi([\I],\tmin)}>0$, where $\alpha$ is the absolute value closest to zero that a trajectory starting on the unit circle attains on $\I$ and, analogously, $\beta$ is the largest such value. We may assume w.l.o.g.\ that $\delta<\alpha/\eps$ and hence $\alpha-\eps\delta>0$. Now choose $\eta<\delta$, then for any $\abs{x}=\eta$ we have that $\abs{\Phi(t,\tmin)x}\in[\alpha\eta,\beta\eta]$ and $\abs{\phi(t,\tmin,x)}\in[(\alpha-\eps\delta)\eta,\beta\eta+\eps\eta^2]$. When restricted to the compact annulus $B[0,\beta\eta+\eps\eta^2]\setminus B(0,(\alpha-\eps\delta)\eta)$, the derivative of the Euclidean norm is uniformly continuous with a modulus of continuity of $\omega(t)=t/((\alpha-\eps\delta)\eta)$. On the other hand, on this annulus and all annuli constructed for smaller $\eta$ we have the quadratic estimate on $R$, yielding $\omega(\eps\eta^2)=\eps\eta^2/((\alpha-\eps\delta)\eta)\xrightarrow{\eta\to 0}0$ and in turn proving Eq.\ \eqref{eq:Rapprox}.
\end{proof}

\subsection{The Local Stable/Unstable Cone Theorem}

In the next two sections we introduce a new, ``intrinsic'' approach to local stable and unstable cones and manifolds, which uses information about $\phi$ on $\I$ only and hence does not rely on classical asymptotic methods. The essential assumption is the continuity of $m$ in $0$.

We define the two functions
\begin{subequations}\label{eq:eta}
\begin{align}
\eta\colon\Gr(1,\R^n)&\to\R_{\geq 0}, & X&\mapsto\inf\set{r\in\R_{>0};~x\in X\cap\S,~rx\notin\Ws_0},\\
\hat{\eta}\colon\Gr(1,\R^n)&\to\R_{\geq 0}, & X&\mapsto\inf\set{r\in\R_{>0};~x\in X\cap\S,~rx\notin\Wu_0}.
\end{align}
\end{subequations}

\begin{theorem}[Local Stable/Unstable Cone Theorem]\label{theo:stabcone}
Let $\Phi\in\cLP(\I,\R^n)$ be the linearization of $\phi$ along $\phi(\cdot,\tmin,0)=0$. If the function $m$ as defined in Eq.\ \eqref{eq:semimetric} is continuous at $0$ then for any $X,Y\in\Gr(1,\R^n)$ with $X\subseteq\Vs(\Phi)$ and $Y\subseteq\Vu(\Phi)$ one has $\eta(X),\hat{\eta}(Y)>0$. Moreover, $\eta$ and $\hat{\eta}$ are bounded away from zero on compact subsets of $\Vs(\Phi)$ and $\Vu(\Phi)$, respectively.
\end{theorem}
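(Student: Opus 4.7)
The plan is to transfer the strict negativity of the upper growth rate (and symmetrically the strict positivity of the lower growth rate) from the linearization $\Phi$ to the nonlinear process $\phi$ in a neighborhood of the zero trajectory, using the function $m$ as a quantitative ``linearization error''. I would prove the stable cone assertion in detail; the unstable cone part is then completely analogous after replacing $\overline\mu$, $\ugr$, $\Ws_0$, $\Vs(\Phi)$ by $\underline\mu$, $\lgr$, $\Wu_0$, $\Vu(\Phi)$ with the obvious sign reversals.

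First I would record a homogeneity observation along rays. For any $X\in\Gr(1,\R^n)$ and any $x\in X\cap\S$, by definition $\overline\mu(x,\Phi)=\ugr(X,\Phi)$, and by \autoref{lemma:constlines} one has $\overline\mu(rx,\Phi)=\overline\mu(x,\Phi)=\ugr(X,\Phi)$ for every $r\in\R_{>0}$, so the linearized upper growth rate is constant along each ray through $0$. Second, for any compact $K\subseteq\Vs(\Phi)$ in $\Gr(1,\R^n)$ -- in particular for the singleton $\set{X}$ associated to any $X\subseteq\Vs(\Phi)$ -- \autoref{remark:definitegrowth} produces a strictly negative uniform bound, that is, $-c_K\coloneqq\sup_{X\in K}\ugr(X,\Phi)<0$.

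Now I would invoke the hypothesized continuity of $m$ at $0$ (with $m(0)=0$) to choose $\eta_0\in\R_{>0}$ with $m(\eta_0)\le c_K/2$. Then for every $X\in K$, $x\in X\cap\S$ and $r\in(0,\eta_0]$, the vector $rx$ lies in $B[0,\eta_0]\setminus\set{0}$, so the definition of $m$ in Eq.~\eqref{eq:semimetric} gives $\abs{\overline\mu(rx,\phi)-\overline\mu(rx,\Phi)}\le m(\eta_0)\le c_K/2$. Combined with the homogeneity from the first step,
\[
\overline\mu(rx,\phi)\le\overline\mu(rx,\Phi)+\tfrac{c_K}{2}=\ugr(X,\Phi)+\tfrac{c_K}{2}\le-\tfrac{c_K}{2}<0,
\]
which by the definition of $\Ws_0$ means $rx\in\Ws_0$. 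Hence $\eta(X)\ge\eta_0$ for every $X\in K$, simultaneously yielding $\eta(X)>0$ for each individual $X\subseteq\Vs(\Phi)$ and a uniform positive lower bound on all of $K$.

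The only genuine obstacle is to ensure that the linearization error $m$ controls $\abs{\overline\mu(rx,\phi)-\overline\mu(rx,\Phi)}$ \emph{uniformly in $r$} along the ray from $0$ to the boundary of the ball, and not merely for unit vectors; this is precisely why the scaling invariance of \autoref{lemma:constlines} is crucial on the linear side and why $m$ was defined as a supremum over a ball (rather than only over $\S$) on the nonlinear side. Apart from this alignment between the two sides of the comparison, the argument is routine quantitative perturbation bookkeeping.
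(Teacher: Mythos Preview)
Your proof is correct and follows essentially the same approach as the paper's: use continuity of $m$ at $0$ to transfer the strict sign of the linearized growth rate to $\overline{\mu}(\cdot,\phi)$ on a small ball, then invoke \autoref{remark:definitegrowth} for the uniform bound on compact subsets. The only cosmetic difference is that the paper treats a single direction first and then extends to compact $K$, whereas you handle both at once via $c_K$; your explicit use of \autoref{lemma:constlines} and the $c_K/2$ buffer are not needed but do no harm.
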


\begin{proof}
By \autoref{def:cones} we have for any $X\in\Gr(1,\R^n)$ with $X\subseteq\Vs(\Phi)$ that $\ugr(X,\Phi)<0$. By the continuity assumption on $m$ there exists $\delta\in\R_{>0}$ such that $m(\eta)<-\ugr(X,\Phi)$ for any $\eta\in(0,\delta]$. Then for any $x\in B[0,\delta]\cap X$ we have $\overline{\mu}(x,\phi)<0$ and analogously the assertion for $Y\in\Gr(1,\R^n)$ with $Y\subseteq\Vu(\Phi)$. The second part follows from \autoref{remark:definitegrowth} and the same argument as applied before to single directions $X\in\Gr(1,\R^n)$.
\end{proof}

By the same continuity argument as in \autoref{theo:stabcone} we can find positive radii $\delta\in\R_{>0}$ for the directions in the interior of $V\coloneqq\R^n\setminus(\Vs(\Phi)\cup\Vu(\Phi))$ such that $V\cap B[0,\delta]\cap\Ws_{\Phi},V\cap B[0,\delta]\cap\Wu_{\Phi}=\varnothing$. In other words, the main conclusion from \autoref{theo:stabcone} is that stable and unstable cones (together with their complement) of the linearization $\Phi$ and domains of attraction and repulsion (with their complement) of the process $\phi$, respectively, are locally indistinguishable. This yields a complete local description of the process $\phi$ by its linearization $\Phi$. Additionally, note that this is a pure continuity result and not an implication of hyperbolicity. For the ODE case in $\R^2$ and stronger regularity assumptions a similar approximation result has been proved in \cite[Theorem 44]{Duc2008}.

As a special case we obtain the following result.

\begin{theorem}[Local Stable/Unstable Manifold Theorem]\label{theo:stabmani}
Suppose the assumptions of \autoref{theo:stabcone} are satisfied and let $Q\in\dP(\R^n)$ be a projection such that $\im Q\subseteq\Vs(\Phi)$ and $\ker Q\subseteq\Vu(\Phi)$. Then there exist neighborhoods $U$ and $V$ of the origin such that $\im Q\cap U\subseteq\Ws_0$ and $\ker Q\cap V\subseteq\Wu_0$. Furthermore, for any $t\in\I$ the following equations hold:
\begin{align}\label{eq:tangency}
T_0\phi(t,\tmin,[\im Q]) &= \Phi(t,\tmin)[\im Q], &\text{and} &&
T_0\phi(t,\tmin,[\ker Q]) &= \Phi(t,\tmin)[\ker Q].
\end{align}
Consequently, $\phi([\I],\tmin,[\im Q\cap U])$ and $\phi([\I],\tmin,[\ker Q\cap V])$ can be considered as \emph{finite-time local stable} and \emph{unstable manifolds}, respectively.
\end{theorem}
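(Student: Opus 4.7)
The proof naturally splits into two parts: establishing the two inclusions into $\Ws_0$ and $\Wu_0$, and verifying the tangency identities in \eqref{eq:tangency}.

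For the first part, my plan is to reduce matters to the second assertion of \autoref{theo:stabcone} by exploiting compactness. Under the identification \eqref{eq:identification} the subspace $\im Q$ corresponds to $\Gr(1,\im Q)$, which is a compact subset of $\Gr(1,\R^n)$ and, by hypothesis, is contained in $\Vs(\Phi)$. Hence $\eta$ is bounded below by some $\delta_1\in\R_{>0}$ on $\Gr(1,\im Q)$. Unwinding the definition of $\eta$ in \eqref{eq:eta}, this means that $rx\in\Ws_0$ for every $X\in\Gr(1,\im Q)$, every $x\in X\cap\S$ and every $r\in[0,\delta_1)$, so $U\coloneqq B(0,\delta_1)$ satisfies $\im Q\cap U\subseteq\Ws_0$. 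The analogous argument, using $\hat{\eta}$ together with $\ker Q\subseteq\Vu(\Phi)$, produces $V\coloneqq B(0,\delta_2)$ with $\ker Q\cap V\subseteq\Wu_0$.

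For the second part, the tangency identity is essentially a chain-rule computation. Since $\phi$ is a $C^1$-process, the cocycle property makes $\phi(t,\tmin,\cdot)$ a $C^1$-diffeomorphism of $\R^n$. Its derivative at the origin is
\[
\partial_2\phi(t,\tmin,0)=\Phi(t,\tmin),
\]
by \autoref{def:linearization} together with our convention $\phi(\cdot,\tmin,0)=0$. Since $\im Q$ is a linear (hence $C^1$-)submanifold through $0$ with tangent space $\im Q$ at $0$, its image under the diffeomorphism $\phi(t,\tmin,\cdot)$ is a $C^1$-submanifold through $\phi(t,\tmin,0)=0$ with tangent space $\Phi(t,\tmin)[\im Q]$ at $0$, which is the first identity in \eqref{eq:tangency}; the second identity follows in the same manner.

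The main obstacle I anticipate is conceptual rather than technical: one must notice that the compact-subset clause of \autoref{theo:stabcone}, combined with \eqref{eq:identification}, delivers a radius $\delta_1>0$ that is \emph{uniform} over all one-dimensional subspaces of $\im Q$. Without invoking this uniformity one would only recover line-by-line positivity of $\eta$, which is insufficient to produce the single neighborhood $U$. Once this reduction is in place, both parts of the theorem follow without further estimates.
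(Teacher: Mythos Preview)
Your proposal is correct and follows essentially the same approach as the paper: invoke the compactness of $\im Q$ and $\ker Q$ (via the identification \eqref{eq:identification}) to apply the uniform lower bound from \autoref{theo:stabcone}, and verify \eqref{eq:tangency} directly from the definition of the linearization. Your write-up simply spells out in more detail what the paper condenses into two sentences.
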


\begin{proof}
Since $\im Q$ and $\ker Q$ are compact subsets of $\Vs(\Phi)$ and $\Vu(\Phi)$, respectively, \autoref{theo:stabcone} applies and the first part is proved. Furthermore, the tangencies in Eq.\ \eqref{eq:tangency} are easily verified with the definition of the linearization.
\end{proof}

\begin{remark}\label{remark:stabmani1} We want to comment on some issues concerning \autoref{theo:stabmani}.
\begin{enumerate}
\item We would like to point out that \autoref{theo:stabmani} holds for general compact $\I$. So far, finite-time Local Stable Manifold Theorems have been proved only in the ODE case with $\I$ being a compact interval; see \cite{Haller1998,Haller2001,Berger2011} and also \autoref{remark:stabmani2}(2).
\item In the finite-time context, one can consider \autoref{theo:stabcone} and \autoref{theo:stabmani} as robustness results as well. As we proved, locally, the stable and unstable cones (together with the subspaces that they contain) persist under nonlinear perturbations with vanishing first order terms.
\item Despite the lack of structure for the domains of attraction and repulsion themselves, we see that the maximal dimension of manifolds contained in these domains going through the origin corresponds to the indices of the EMD growth rates, i.e.\ to rank and deficiency of the EMD-projection, respectively.
\item Note that by the assumption that $\phi$ be a $C^k$-process on $\I$, $k\in\set{1,2}$, we obtain directly that the extension of $\im Q\cap U$ and $\ker Q\cap U$ by $\phi$ to the extended state space $\I\times\R^n$ gives a $C^k$-manifold in each time-fiber $\set{t}\times\R^n$, $t\in\I$. Evidently, the chart is given by $\phi(t,\tmin,\cdot)$.
\item The function $m$ can be considered as a local measure of nonlinearity of $\phi$, in the sense that $m=0$ if $\phi$ itself is linear and that $m$ takes small values in case $\phi$ is only a small perturbation of a linear process. The more linear $\phi$ becomes, the larger we can choose the radius $\delta\in\R_{>0}$ such that $B(0,\delta)\cap\im Q\subseteq\Ws_0$ and $B(0,\delta)\cap\ker Q\subseteq\Wu_0$. In the ``linear limit'' we recover that $\im Q\subseteq\Ws_0$ and $\ker Q\subseteq\Wu_0$. In this sense, we believe that our version of a local finite-time stable manifold theorem is a very natural one. On the other hand, for fixed nonlinearity, clearly $m$ is an increasing function, i.e.\ the further away we go from the hyperbolic reference trajectory, the weaker we expect the exponential decay/growth to be, until some point where the EMD-subspaces/cones leave the domain of attraction and repulsion, respectively.
\end{enumerate}
\end{remark}

As an easy consequence of \autoref{theo:stabmani} we obtain the following finite-time analogue of the classical Theorem of Linearized Asymptotic Stability. It is a generalization of \cite[Theorem 5.1]{Rasmussen2010} to arbitrary compact time-sets.

\begin{theorem}[Linearized Finite-time Attraction/Repulsion]\label{theo:linstability}
Let $\phi\in\cP(\I,\R^n)$ be a $C^1$-process on $\I$, $x\in\R^n$, $\phi(\cdot,\tmin,x)$ an attractive (repulsive) trajectory and $m$ be continuous in $0$. Then there exists a neighborhood $U$ of $x$ such that $U\subseteq\Ws_x$ ($U\subseteq\Wu_x$, respectively).
\end{theorem}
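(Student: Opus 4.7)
The plan is to deduce the statement as a direct specialization of the Local Stable/Unstable Manifold Theorem (\autoref{theo:stabmani}) in the extremal-rank cases $k=n$ (attractive) and $k=0$ (repulsive). As a first step, I would invoke the time-dependent coordinate shift $(t,y)\mapsto(t,y-\phi(t,\tmin,x))$ used earlier in the section to reduce to the case $x=0$, so that the reference trajectory becomes the zero trajectory. This shift preserves the $C^1$-structure, maps $\Ws_x$ onto $\Ws_0$ (and $\Wu_x$ onto $\Wu_0$) in the new coordinates, and carries over the hypothesis that $m$ is continuous at $0$.

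In the attractive case, the assumption that $\phi(\cdot,\tmin,0)$ is attractive means, by \autoref{def:EMD}, that the linearization $\Phi\coloneqq\Phi_{(\tmin,0)}$ admits an EMD with $k=n$, equivalently $\eugr[n](\Phi)<0$. Since $\R^n$ is the unique $n$-dimensional subspace of $\R^n$, we have $\ugr(\R^n,\Phi)=\eugr[n](\Phi)<0$, and by the monotonicity in \autoref{lemma:gr}(i) we get $\ugr(X,\Phi)\leq\ugr(\R^n,\Phi)<0$ for every $X\in\Gr(1,\R^n)$. Hence $\R^n\subseteq\Vs(\Phi)$. I would then apply \autoref{theo:stabmani} to the trivial projection $Q\coloneqq\id_{\R^n}$, for which $\im Q=\R^n\subseteq\Vs(\Phi)$ and $\ker Q=\set{0}\subseteq\Vu(\Phi)$ hold automatically. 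The theorem produces a neighborhood $U$ of $0$ with $U=\im Q\cap U\subseteq\Ws_0$, and translating back to the original coordinates yields the required neighborhood of $x$ inside $\Ws_x$.

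The repulsive case is completely symmetric, with $k=0$, $Q=0$, $\ker Q=\R^n\subseteq\Vu(\Phi)$, using $\elgr[n](\Phi)>0$ together with \autoref{lemma:gr}(i). No real obstacle arises here: the statement is essentially \autoref{theo:stabmani} specialized to the degenerate situation in which one of the EMD subspaces is trivial and the other is all of $\R^n$, so that the local stable (respectively unstable) manifold collapses into a full open neighborhood of $x$. The only point that requires a line of verification is the propagation of the sign of the extremal $n$-growth rate to all one-dimensional subspaces, which is immediate from \autoref{lemma:gr}(i).
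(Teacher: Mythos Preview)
Your proposal is correct and matches the paper's intent exactly: the paper presents \autoref{theo:linstability} without proof, merely noting that it is ``an easy consequence of \autoref{theo:stabmani}'', and your specialization to $Q=\id_{\R^n}$ (respectively $Q=0$) is precisely that easy consequence. One small technical wrinkle: in the attractive case $\Vu(\Phi)=\varnothing$ as a subset of $\R^n$, so ``$\ker Q=\{0\}\subseteq\Vu(\Phi)$'' is literally false; this is harmless since under the identification \eqref{eq:identification} used in the proof of \autoref{theo:stabmani} the trivial subspace corresponds to the empty set in $\Gr(1,\R^n)$, and the $\ker Q$-part of the conclusion is vacuous anyway---alternatively you can sidestep \autoref{theo:stabmani} and apply \autoref{theo:stabcone} directly to the compact set $\Gr(1,\R^n)\subseteq\Vs(\Phi)$.
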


\subsection{Time-Extensions of Cones and Domains and their Relationship}
\label{sec:hartman-grobman}

Next we investigate the relationship between the cones $\Vs(\Phi)$ and $\Vu(\Phi)$ extended by the linearization $\Phi$ to the extended state space on the one hand, i.e.\
\begin{align*}
\cVs_{\Phi}\coloneqq\Phi(\cdot,\tmin)[\Vs(\Phi)]&=\set{(t,\Phi(t,\tmin)x)\in\I\times\R^n;~(t,x)\in\I\times\Vs(\Phi)},\\
\cVu_{\Phi}\coloneqq\Phi(\cdot,\tmin)[\Vu(\Phi)]&=\set{(t,\Phi(t,\tmin)x)\in\I\times\R^n;~(t,x)\in\I\times\Vu(\Phi)},
\end{align*}
and the domains $\Ws_0$ and $\Wu_0$ extended by $\phi$ on the other hand, i.e.\
\begin{align*}
\cWs_0&\coloneqq\phi(\cdot,\tmin,[\Ws_0])=\set{(t,\phi(t,\tmin,x))\in\I\times\R^n;~(t,x)\in\I\times\Ws_0}, \displaybreak[2]\\
\cWu_0&\coloneqq\phi(\cdot,\tmin,[\Wu_0])=\set{(t,\phi(t,\tmin,x))\in\I\times\R^n;~(t,x)\in\I\times\Wu_0}.
\end{align*}
As usual, we denote by $\cVs_{\Phi}(t)$, $\cVu_{\Phi}(t)$, $\cWs_0(t)$ and $\cWu_0(t)$, $t\in\I$, the $t$-fiber of the respective subsets of the extended state space. The next proposition states that in each $t$-fiber the extended stable and unstable cones are locally contained in the domains of attraction and repulsion, respectively.

\begin{theorem}[Relationship between Extensions]\label{theo:conesdomains}
Suppose the assumptions of \autoref{theo:stabcone} are satisfied. Define the time-extensions of Eq.\ \eqref{eq:eta}
\begin{align*}
\eta\colon\I\times\Gr(1,\R^n)&\to\R_{\geq 0}, & (t,Y)&\mapsto\inf\set{r\in\R_{>0};~y\in Y\cap\S,~ry\notin\cWs_0(t)},\\
\hat{\eta}\colon\I\times\Gr(1,\R^n)&\to\R_{\geq 0}, & (t,Y)&\mapsto\inf\set{r\in\R_{>0};~y\in Y\cap\S,~ry\notin\cWu_0(t)}.
\end{align*}
Then for each $t\in\I$ one has $\eta(t,\cdot)\bigr|_{\cVs_{\Phi}(t)},\hat{\eta}(t,\cdot)\bigr|_{\cVu_{\Phi}(t)}>0$.
\end{theorem}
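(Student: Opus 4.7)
My plan is to reduce the statement to Theorem~\ref{theo:stabcone} applied with $t$ in place of $\tmin$ as the base time. First, I would fix $t\in\I$ and observe, via the cocycle identity $\phi(\cdot,t,x)=\phi(\cdot,\tmin,\phi(\tmin,t,x))$ and its linear counterpart $\Phi(\cdot,t)x=\Phi(\cdot,\tmin)\Phi(\tmin,t)x$, that
\begin{equation*}
\cWs_0(t)=\set{y\in\R^n\setminus\set{0};~\sup_{(a,b)\in\Ineq}\Delta(\abs{\phi(\cdot,t,y)})(a,b)<0}\cup\set{0},
\end{equation*}
and that $\cVs_\Phi(t)$ coincides with the analogous stable cone of $\Phi$ with respect to base time $t$ (and similarly for $\cWu_0(t)$ and $\cVu_\Phi(t)$). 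Moreover, the linearization of $\phi$ along the zero reference trajectory at base time $t$ is again $\Phi$. Hence the setup of Theorem~\ref{theo:stabcone} is intact after this shift, provided I verify the corresponding continuity hypothesis, namely that the approximation measure $m^{(t)}$ built as in Eq.~\eqref{eq:semimetric} but from $\underline{\mu}^{(t)}$ and $\overline{\mu}^{(t)}$ (inf and sup of $\Delta$ based at $t$) is continuous at $0$.

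Granting this, the argument of Theorem~\ref{theo:stabcone} transfers verbatim with $\tmin$ replaced by $t$ and yields $\eta(t,Y),\hat{\eta}(t,\hat Y)>0$ for all $Y\subseteq\cVs_\Phi(t)$ and $\hat Y\subseteq\cVu_\Phi(t)$. To deduce continuity of $m^{(t)}$ at $0$ from that of $m$, I would again use the cocycle to write $\overline{\mu}^{(t)}(x,\phi)=\overline{\mu}(\phi(\tmin,t,x),\phi)$ and $\overline{\mu}^{(t)}(x,\Phi)=\overline{\mu}(\Phi(\tmin,t)x,\Phi)$, insert the mixed quantity $\overline{\mu}(\phi(\tmin,t,x),\Phi)$, and apply the triangle inequality to obtain
\begin{equation*}
\abs{\overline{\mu}^{(t)}(x,\phi)-\overline{\mu}^{(t)}(x,\Phi)}\leq m(\abs{\phi(\tmin,t,x)})+\abs{\ugr(\linhull\set{\phi(\tmin,t,x)},\Phi)-\ugr(\linhull\set{\Phi(\tmin,t)x},\Phi)},
\end{equation*}
and proceed analogously for $\underline{\mu}^{(t)}$.

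The first summand on the right-hand side vanishes as $\abs{x}\to 0$, uniformly, by continuity of $\phi(\tmin,t,\cdot)$ at $0$ (with $\phi(\tmin,t,0)=0$) combined with the assumed continuity of $m$ at $0$. The second summand is the delicate one, and I expect it to be the main obstacle. To control it I would invoke Proposition~\ref{prop:grc}, which gives continuity of $\ugr(\cdot,\Phi)\bigr|_{\Gr(1,\R^n)}$ in the gap metric, and then show that the lines $\linhull\set{\phi(\tmin,t,x)}$ converge to $\linhull\set{\Phi(\tmin,t)x}$ in the gap metric uniformly in the direction $x/\abs{x}\in\S$ as $\abs{x}\to 0$. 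This uniformity follows from the differentiability of $\phi(\tmin,t,\cdot)$ at $0$ with derivative $\Phi(\tmin,t)$, giving $\phi(\tmin,t,x)=\Phi(\tmin,t)x+o(\abs{x})$, combined with the lower bound $\abs{\Phi(\tmin,t)x}\geq c\abs{x}$ for some $c>0$ furnished by invertibility of $\Phi(\tmin,t)$ and compactness of $\S$. Putting these together forces $m^{(t)}(\eta)\to 0$ as $\eta\to 0^+$, and Theorem~\ref{theo:stabcone} applied at base time $t$ then finishes the proof.
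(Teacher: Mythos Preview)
Your approach is correct and takes a genuinely different route from the paper. The paper fixes $t$ and $y\in\cVs_\Phi(t)\cap\S$, pulls the point $ry$ back via $\phi$ to $x_r\coloneqq\phi(\tmin,t,ry)$, and uses the first-order expansion $\phi(\tmin,t,ry)=\Phi(\tmin,t)(ry)+o(r)$ together with openness of $\Vs(\Phi)$ (\autoref{lemma:opencones}) to show that for small $r$ the point $x_r$ lies in a fixed compact neighbourhood of $\linhull\{\Phi(\tmin,t)y\}$ contained in $\Vs(\Phi)$; the compact-subset clause of \autoref{theo:stabcone}, applied at the original base time $\tmin$, then places $x_r$ in $\Ws_0$. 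You instead shift the base time and re-establish the hypothesis of \autoref{theo:stabcone} at $t$, deducing continuity of $m^{(t)}$ at $0$ from that of $m$ via the cocycle, the triangle-inequality splitting, and uniform continuity of $\ugr(\cdot,\Phi)$ on $\Gr(1,\R^n)$ from \autoref{prop:grc}. Both arguments rest on the same analytic ingredient (the $o(\abs{x})$ remainder for $\phi(\tmin,t,\cdot)-\Phi(\tmin,t)$ combined with invertibility of $\Phi(\tmin,t)$), but you package it as ``$\linhull\{\phi(\tmin,t,x)\}\to\linhull\{\Phi(\tmin,t)x\}$ in $\Gr(1,\R^n)$ uniformly in direction,'' whereas the paper phrases it geometrically as ``$x_r$ stays in a shrinking ball around $\Phi(\tmin,t)(ry)$ inside the open cone.'' Your reduction is conceptually cleaner and makes explicit that the theorem is nothing but base-time invariance of \autoref{theo:stabcone}; the paper's argument is more hands-on but stays entirely at the fixed base time $\tmin$ and avoids re-verifying any hypothesis.
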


\begin{proof}
We prove only $\eta(t,\cdot)\bigr|_{\cVs_{\Phi}(t)}>0$, since the second assertion can be proved completely analogously. Let $t\in\I$, $y\in\cVs_{\Phi}(t)\cap\S$. We sketch the idea of the proof first: By the invariance of $\cVs_{\Phi}$ under $\Phi$ it is clear that $\Phi(\tmin,t)y\in\cVs_{\Phi}(\tmin)=\Vs(\Phi)$. Now, consider $x_r\coloneqq\phi(\tmin,t,ry)$, $r\in(0,1]$. To prove that $ry\in\cWs_0(t)$ for sufficiently small $r$, it is sufficient to show that $x_r\in\Ws_0=\cWs_0(\tmin)$. Since $\Vs(\Phi)$ is open by \autoref{lemma:opencones}, our aim is to show that for sufficiently small $r$ we have that $x_r$ is contained in a neighborhood of $1$-dimensional subspaces around $\linhull\set{\Phi(\tmin,t)y}$ which is a subset of $\Vs(\Phi)$. By \autoref{theo:stabcone} we then conclude that for sufficiently small $r$ the vector $x_r$ is contained in the domain of attraction $\Ws_0$. This proves the strict positivity of $\eta(t,\cdot)\bigr|_{\cVs_{\Phi}(t)}$ as claimed.

Thus, it remains to show that $x_r\in\cWs_0(\tmin)$ for sufficiently small $r$. By \autoref{lemma:opencones} there exists $\theta\in\R_{>0}$ such that
\begin{equation*}
B(\Phi(\tmin,t)y,\theta)\subset\cVs_{\Phi}(\tmin)=\Vs(\Phi).
\end{equation*}
Clearly, due to the positive homogeneity of the norm $\abs{\cdot}$ on $\R^n$, the invariance of $\cVs_{\Phi}(\tmin)$ under scalar multiplication and linearity of $\Phi(\tmin,t)$, we find that for all $r\in(0,1]$ holds $B(\Phi(\tmin,t)(ry),r\theta)\subset\cVs_{\Phi}(\tmin)$. By expanding $\phi(\tmin,t,\cdot)$ in $0$ we obtain for all $z\in\R^n\setminus\set{0}$
\begin{equation*}
\abs{\phi(\tmin,t,z)-\Phi(\tmin,t)z}\in o(\abs{z}) \quad\text{for }\abs{z}\to 0.
\end{equation*}
This is equivalent to the fact that for any $\eps\in\R_{>0}$ there exists $\delta\in\R_{>0}$ such that for any $z\in\R^n\setminus\set{0}$ with $\abs{z}\leq\delta$ we have
\begin{equation}\label{eq:smallo}
\abs{\phi(\tmin,t,z)-\Phi(\tmin,t)z}<\eps\abs{z}.
\end{equation}
In other words, for any $\eps\in\R_{>0}$, sufficiently small $\delta\in\R_{>0}$ and $\abs{z}\leq\delta$ we have
\begin{equation*}
\phi(\tmin,t,z)\in B(\Phi(\tmin,t)z,\eps\abs{z})\subseteq B(z,\eps\delta).
\end{equation*}
In particular, choosing $\eps=\theta/2$ we find that for $\delta\in\R_{>0}$ from Eq.\ \eqref{eq:smallo} and consequently for any $ry$ with $r\in[0,\min\set{\delta,1}]$ holds
\begin{equation*}
x_r=\phi(\tmin,t,ry)\in B[\Phi(\tmin,t)(ry),r\theta/2]\subset B(\Phi(\tmin,t)(ry),r\theta)\subset\cVs_{\Phi}(\tmin).
\end{equation*}
Since $B[\linhull\set{\Phi(\tmin,t)(ry)},r\theta/2]\subset \Gr(k,\R^n)$ is closed and hence compact, we know that for sufficiently small $r\in\R_{>0}$ we have $x_r\in\cWs_0(\tmin)$ by \autoref{theo:stabcone}.
\end{proof}

\section{Applications}
\label{sec:applications}

In the following, we want to apply the developed notions and results to ordinary differential equations, which we define without loss of generality globally for the sake of simplicity, i.e.\
\begin{equation}\label{eq:ode}
\dot{x}=f(t,x),
\end{equation}
where $f\in C^1(I\times\R^n,\R^n)$, $I\subseteq\R$ an interval, $\I\subseteq I$ compact and $(f(t,\cdot))_{t\in I}$ is uniformly Lipschitz continuous with Lipschitz constant $L_f$. Thus, Eq.\ \eqref{eq:ode} is well-posed and the solution operator $\phi$ is well-defined. It is well-known that $\phi$ satisfies the conditions of a $C^1$-process. We consider the case that $\abs{\cdot}$ is continuously differentiable on $\R^n\setminus\set{0}$. In particular, this covers all norms induced by the Euclidean inner product and a symmetric positive definite matrix $\Gamma\in\R^{n\times n}$ as considered in \cite{Berger2010,Berger2011}. We fix a solution $\phi(\cdot,\tmin,x)\colon\I\to\R^n$, $x\in\R^n$, and perform a time-dependent coordinate shift of the form $(t,x)\mapsto (t,x-\phi(t,\tmin,x))\eqqcolon(t,y)$. Then in the new coordinates Eq.\ \eqref{eq:ode} takes the form
\begin{equation}\label{eq:linearode}
\dot{y}=\partial_1 f(t,\phi(t,\tmin,x))y+g(t,y)=A(t)y+g(t,y),
\end{equation}
where $A\coloneqq\left(t\mapsto\partial_1 f(t,\phi(t,\tmin,x))\right)\in C(\I,\cL(\R^n))$, $g\in C(\I\times\R^n,\R^n)$ and
\begin{equation*}
g(t,v) = f(t,v+\phi(t,\tmin,x))-f(t,\phi(t,\tmin,x))-\partial_1 f(t,\phi(t,\tmin,x))v,
\end{equation*}
for $t\in\I$ is the nonlinear term. By definition of the derivative we have
\begin{equation*}
g(t,x)/\abs{x}\xrightarrow{\abs{x}\to 0}0
\end{equation*}
for any $t\in\I$. In other words, for any $t\in\I$ and $\eps\in\R_{>0}$ there exists $\delta\in(0,1]$ such that the estimate $\sup\set{\abs{g(t,x)}/\abs{x};~x\in B(0,\delta)\setminus\set{0}}<\eps$ holds. Due to the uniform continuity of $g\bigr|_{\I\times B[0,1]}$, we even obtain that for any $\eps\in\R_{>0}$ there exists $\delta\in(0,1]$ such that
\begin{equation}\label{eq:unismallog}
\sup\set{\frac{\abs{g(t,x)}}{\abs{x}};~x\in B(0,\delta)\setminus\set{0},~t\in\I}<\eps.
\end{equation}
We call
\begin{equation}\label{eq:linode2}
\dot{y}=\partial_1 f(t,\phi(t,\tmin,x))y
\end{equation}
the linearization of \eqref{eq:ode} along $\phi(\cdot,\tmin,x)$. It is well-known that the associated solution operator $\Phi$ of \eqref{eq:linode2}, interpreted as a linear process, is the linearization of $\phi$ along $\phi(\cdot,\tmin,x)$ and that $\Phi$ is continuously differentiable in the first argument. Under the differentiability assumption on the norm the growth rates take the form (as introduced in \cite[Definition 7]{Berger2009})
\begin{equation}
\begin{split}\label{eq:contgrowthrates}
\lgr(X,\Phi) &= \min\set{\frac{(\abs{\Phi(\cdot,\tmin)x})'(t)}{\abs{\Phi(t,\tmin)x}};~t\in\I,~x\in X\cap\S},\\
\ugr(X,\Phi) &= \max\set{\frac{(\abs{\Phi(\cdot,\tmin)x})'(t)}{\abs{\Phi(t,\tmin)x}};~t\in\I,~x\in X\cap\S},
\end{split}
\end{equation}
for $X\in \Gr(k,\R^n)$, which can be seen by \autoref{lemma:equivalences}. From this representation and the chain rule $(\abs{\Phi(\cdot,\tmin)x})'(t)=\abs{\cdot}'(\Phi(t,\tmin)x)\partial_0(\Phi(t,\tmin)x)$ for $t\in\I$ we can see that $d_{\I}$ ($\tilde{d}_{\I}$) can be interpreted as some kind of $C^1$ (semi-)metric for linear solution operators on $\I$.

By classical techniques and Gronwall's lemma one easily establishes that linear right hand sides $A\in C(\I,\cL(\R^n))$ map continuously (with respect to $d_{\I}$) to their unique solution operator $\Phi\in\cLP(\I,\R^n)$. From this observation together with \autoref{theo:robusthyp} we get the following result, which was obtained already in \cite{Berger2010}.

\begin{theorem}[{Robustness of EMD, \cite[Lemma 3]{Berger2010}}]\label{prop:robustlinsys}
Consider
\begin{equation}\label{eq:linode3}
\dot{x}=A(t)x,
\end{equation}
with $A\in C(\I,\cL(\R^n))$. Suppose the associated solution operator $\Phi$ admits an EMD on $\I$. Then there exists $\delta\in\R_{>0}$ such that the solution operator of any $B\in B(A,\delta)$ admits an EMD on $\I$ (with the same extremal projection as $A$).
\end{theorem}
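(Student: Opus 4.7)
The strategy is to derive this from the general robustness result, \autoref{theo:robusthyp}: granted that the solution-operator map $A\mapsto\Phi_A$ from $(C(\I,\cL(\R^n)),\norm{\cdot}_\infty)$ to $(\cLP(\I,\R^n),\tilde{d}_{\I})$ is continuous at $A$, set $\theta\coloneqq\min\set{-\eugr[k](\Phi_A),\elgr[n-k](\Phi_A)}$. This is strictly positive because $\Phi_A$ admits an EMD with index $k$, so I can choose $\delta\in\R_{>0}$ with $\norm{A-B}_\infty<\delta$ forcing $\tilde{d}_{\I}(\Phi_A,\Phi_B)<\theta$. \autoref{theo:robusthyp} then yields an EMD on $\I$ for the solution operator $\Phi_B$ of $\dot{x}=B(t)x$, with the same index $k$ and with the extremal subspaces of $\Phi_A$ serving as the projection data, which is the claimed assertion.

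The work is to establish $\tilde{d}_{\I}(\Phi_A,\Phi_B)\to 0$ as $\norm{A-B}_\infty\to 0$. First, from the integral identity $(\Phi_A-\Phi_B)(t,\tmin)x=\int_{\tmin}^t A(\tau)(\Phi_A-\Phi_B)(\tau,\tmin)x\d\tau+\int_{\tmin}^t (A-B)(\tau)\Phi_B(\tau,\tmin)x\d\tau$, a standard application of Gronwall's lemma (together with a uniform sup-norm bound on $\Phi_B$ for $B\in B(A,\delta)$, itself a Gronwall consequence of $\dot{y}=B(t)y$) yields $\sup_{x\in\S}\norm{(\Phi_A-\Phi_B)(\cdot,\tmin)x}_\infty\leq C\norm{A-B}_\infty$ with $C>0$ depending only on $\norm{A}_\infty$, $\delta$, and the length of $\I$.

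Second, I would use the continuous differentiability of $\abs{\cdot}$ on $\R^n\setminus\set{0}$ together with the representation \eqref{eq:contgrowthrates} to express the logarithmic difference quotient as the time-average
\[
\Delta(\abs{\Phi_A(\cdot,\tmin)x})(t,s)=\frac{1}{t-s}\int_s^t\frac{\abs{\cdot}'(\Phi_A(\tau,\tmin)x)\,A(\tau)\Phi_A(\tau,\tmin)x}{\abs{\Phi_A(\tau,\tmin)x}}\d\tau,
\]
and analogously for $B$. It then suffices to show that the two integrands converge to each other uniformly on $(\tau,x)\in\I\times\S$. The first step supplies $\Phi_B\to\Phi_A$ uniformly; invertibility of $\Phi_A$ combined with compactness of $\I\times\S$ gives $\alpha\coloneqq\min_{\tau,x}\abs{\Phi_A(\tau,\tmin)x}>0$, so for $\delta$ sufficiently small all perturbed trajectories remain in a compact annulus bounded away from $0$ on which $\abs{\cdot}'$ is uniformly continuous. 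The integrands, and hence their averages, then converge uniformly, yielding $\tilde{d}_{\I}(\Phi_A,\Phi_B)\to 0$. The main obstacle is precisely this last step: $\tilde{d}_{\I}$ is defined through logarithmic difference quotients which are a priori not continuous in the process, and the bridge from Gronwall-type sup-norm estimates to growth-rate estimates relies essentially on the pointwise representation \eqref{eq:contgrowthrates} afforded by the differentiability of the norm.
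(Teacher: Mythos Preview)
Your proposal is correct and follows essentially the same route as the paper: the paper states (just before the theorem) that ``by classical techniques and Gronwall's lemma one easily establishes that linear right hand sides $A\in C(\I,\cL(\R^n))$ map continuously (with respect to $d_{\I}$) to their unique solution operator $\Phi\in\cLP(\I,\R^n)$'' and then invokes \autoref{theo:robusthyp}. You simply fill in the details of that continuity claim---the Gronwall estimate for $C^0$-closeness and the passage to growth rates via the pointwise representation \eqref{eq:contgrowthrates} under the standing differentiability assumption on the norm---which the paper leaves implicit.
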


The last result is interesting from the following point of view. When imposed on $\R_{\geq 0}$ the inequalities \eqref{eq:classicalhyp} in \autoref{lemma:hypgrowth} required for an EMD on $\R_{\geq 0}$ correspond to the definition of a so-called \emph{semistrong dichotomy} of \eqref{eq:linode3} on $\R_{\geq 0}$; see \cite{Vinograd1991-1}. \cite{Vinograd1988,Vinograd1991-1} yield that semistrong (exponential) dichotomies on $\R_{\geq 0}$ are robust only in the larger class of general exponential dichotomies, but not within semistrong dichotomies. That means that robustness of EMD cannot be deduced from the classical asymptotic analysis but is a pure finite-time result.

Analogously to the robustness investigation for linear processes in \autoref{sec:perturbation} we now address the question of the stability radius for linear ordinary differential equations given on a compact time-interval.

\begin{definition}[Stability radius]
Suppose Eq.\ \eqref{eq:linode3} with $A\in C(\I,\cL(\R^n))$ generates an attracting solution operator on $I$. Then we define the \emph{stability radius of $A$} by
\begin{equation*}
r(A)\coloneqq\inf\set{\norm{B}_{\infty};~B\in C(\I,\cL(\R^n)),~(A+B)\text{ is not attracting on $\I$}}.
\end{equation*}
\end{definition}

To calculate the stability radius of a given $A$ we make use of an elementary result which can be found in \cite{Coppel1978} and which specializes to the following result.

\begin{proposition}[{cf.\ \cite[Proposition 1, p.\ 2]{Coppel1978}}]
Consider
\begin{equation*}
\dot{x}=A(t)x,
\end{equation*}
with $A\in C(\I,\cL(\R^n))$. Suppose the associated solution operator $\Phi$ is attractive on $\I$, i.e.\ $\Phi$ admits an EMD on $\I$ with the trivial projection $\id_{\R^n}$ and $\ugr(\R^n,\Phi)<0$. Then for any $B\in C(\I,\cL(\R^n))$ with $\norm{B}_{\infty}\leq\abs{\ugr(\R^n,\Phi)}\eqqcolon\delta$ the solution operator $\Psi$ of the perturbed ODE
\begin{equation*}
\dot{y}=(A(t)+B(t))y
\end{equation*}
satisfies $\ugr(\R^n,\Psi)\leq\ugr(\R^n,\Phi)+\delta=0$.
\end{proposition}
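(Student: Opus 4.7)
The plan is to exploit the derivative representation of the growth rates given in Equation \eqref{eq:contgrowthrates}, which is available here because $\abs{\cdot}$ is continuously differentiable on $\R^n\setminus\set{0}$. The idea is to derive a pointwise bound on the infinitesimal logarithmic rate along any $\Psi$-trajectory by splitting the right-hand side into the $A$- and $B$-contributions.

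First I would recall that a $1$-homogeneous $C^1$-norm satisfies $\abs{\cdot}'(z)\cdot z=\abs{z}$ and its derivative, viewed as a linear functional, has dual norm equal to $1$ for every $z\neq 0$; this follows from homogeneity together with the standard bound on one-sided directional derivatives $\lim_{h\downarrow 0}(\abs{z+hv}-\abs{z})/h\leq\abs{v}$. In particular, for any operator $C\in\cL(\R^n)$ and any $w\neq 0$ we have
\begin{equation*}
\frac{\abs{\cdot}'(w)\,[Cw]}{\abs{w}}\leq\norm{C}.
\end{equation*}

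Next I would use the hypothesis on $\Phi$ in a pointwise form. By invertibility of $\Phi(t,\tmin)$, for every $t\in\I$ and every $w\in\R^n\setminus\set{0}$ there is a unique $z_0=\Phi(\tmin,t)w\neq 0$ with $w=\Phi(t,\tmin)z_0$. Writing $z(t)=\Phi(t,\tmin)z_0$ and using $\dot z(t)=A(t)z(t)$ together with the chain rule for the norm gives
\begin{equation*}
\frac{(\abs{\Phi(\cdot,\tmin)z_0})'(t)}{\abs{\Phi(t,\tmin)z_0}}=\frac{\abs{\cdot}'(w)\,[A(t)w]}{\abs{w}},
\end{equation*}
which, by the assumption $\ugr(\R^n,\Phi)<0$ and Equation \eqref{eq:contgrowthrates}, is at most $\ugr(\R^n,\Phi)$. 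Since $w$ and $t$ were arbitrary, we obtain the pointwise inequality
\begin{equation*}
\frac{\abs{\cdot}'(w)\,[A(t)w]}{\abs{w}}\leq\ugr(\R^n,\Phi)\qquad\text{for all }t\in\I,\ w\in\R^n\setminus\set{0}.
\end{equation*}

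Finally I would run the same computation for the perturbed equation. For $y\in\S$, writing $w(t)\coloneqq\Psi(t,\tmin)y$ and using $\dot w=(A(t)+B(t))w$ together with the chain rule,
\begin{equation*}
\frac{(\abs{\Psi(\cdot,\tmin)y})'(t)}{\abs{\Psi(t,\tmin)y}}=\frac{\abs{\cdot}'(w(t))\,[A(t)w(t)]}{\abs{w(t)}}+\frac{\abs{\cdot}'(w(t))\,[B(t)w(t)]}{\abs{w(t)}}\leq\ugr(\R^n,\Phi)+\norm{B(t)}\leq\ugr(\R^n,\Phi)+\delta=0,
\end{equation*}
where the first term is bounded by the preceding step and the second by the dual-norm bound. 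Taking the maximum over $y\in\S$ and $t\in\I$ in Equation \eqref{eq:contgrowthrates} yields $\ugr(\R^n,\Psi)\leq 0$. The only mild subtlety is ensuring that the pointwise characterization transfers from $\Phi$-trajectories to all of $\R^n\setminus\set{0}$, which is handled by invertibility of $\Phi(t,\tmin)$; the rest is a direct consequence of the homogeneous structure of a smooth norm.
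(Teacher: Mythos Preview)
Your argument is correct. Note that the paper does not supply its own proof here; it simply cites \cite[Proposition 1, p.\ 2]{Coppel1978} and uses the conclusion. The classical Coppel argument proceeds via the variation-of-constants formula
\[
\Psi(t,s)=\Phi(t,s)+\int_s^t\Phi(t,r)B(r)\Psi(r,s)\d r
\]
together with Gronwall's inequality to obtain the operator-norm bound $\norm{\Psi(t,s)}\leq\e^{(\ugr(\R^n,\Phi)+\delta)(t-s)}$ for $t\geq s$, from which the growth-rate estimate follows; no differentiability of the norm is needed there.

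Your route is genuinely different and arguably closer to the spirit of the paper's framework: you work directly with the infinitesimal representation \eqref{eq:contgrowthrates}, use the invertibility of $\Phi(t,\tmin)$ to turn the hypothesis into the pointwise inequality $\abs{\cdot}'(w)[A(t)w]/\abs{w}\leq\ugr(\R^n,\Phi)$ valid for \emph{every} $w\neq 0$ and $t\in\I$, and then split the perturbed right-hand side linearly. The dual-norm bound $\abs{\,\abs{\cdot}'(w)v\,}\leq\abs{v}$ that you invoke is justified exactly as you indicate, from $1$-homogeneity and the one-sided Lipschitz estimate of a norm. The trade-off is that your proof relies on the standing assumption of this section that $\abs{\cdot}$ is $C^1$ on $\R^n\setminus\set{0}$, whereas Coppel's integral argument works for an arbitrary norm; in exchange, yours is shorter, avoids Gronwall entirely, and makes the additivity of the bound in $\norm{B}_\infty$ transparent.
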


In other words, $-\ugr(\R^n,\Phi)>0$ is a lower bound on the stability radius around $A$ in $C(\I,\cL(\R^n))$. The fact that it is also an upper bound follows directly from the well-known correspondence of shifted ODEs and weighted processes (cf.\ Eq.\ \eqref{eq:shiftedode}), the definition of spectrum based on the weights/shifts and the fact that for operator norms induced by a vector norm the identity has norm 1. In summary, by a completely analogous argumentation as in the proof of \autoref{theo:stabradiusprocess} we find that
\begin{equation*}
\dot{y}=(A(t)-\ugr(\R^n,\Phi)\id_{\R^n})y
\end{equation*}
is not attractive. Thus, we obtained that the stability radius of an attractive linear ODE given on $\I$ and the (pseudo-)stability radius of its associated solution operator coincide.

\begin{theorem}[Finite-time Stability Radius]
Let $A\in C(\I,\cL(\R^n))$ and let the associated solution operator $\Phi$ be attractive. Then
\begin{equation*}
r(A) = -\ugr(\R^n,\Phi).
\end{equation*}
\end{theorem}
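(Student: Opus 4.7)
The plan is to establish two inequalities, $r(A)\geq -\ugr(\R^n,\Phi)$ and $r(A)\leq -\ugr(\R^n,\Phi)$, both of which are essentially laid out in the paragraph immediately preceding the theorem. The task is to isolate each half cleanly and track boundary cases; the whole proof mirrors that of \autoref{theo:stabradiusprocess} almost verbatim, the main structural content being a reduction of the ODE stability radius to the process-level (pseudo-)stability radius.

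For the lower bound, I would apply the preceding proposition to an arbitrary admissible perturbation $B \in C(\I,\cL(\R^n))$ with $\norm{B}_{\infty} < -\ugr(\R^n,\Phi)$. Its estimate yields $\ugr(\R^n,\Psi)\leq\ugr(\R^n,\Phi)+\norm{B}_{\infty}<0$, so by \autoref{lemma:hypgrowth} (with $k=n$) the perturbed solution operator $\Psi$ still admits an EMD with trivial projection, i.e.\ is attractive. Hence no perturbation of operator-supremum norm strictly less than $-\ugr(\R^n,\Phi)$ can destroy attractivity, which gives $r(A)\geq -\ugr(\R^n,\Phi)$.

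For the upper bound, I would exhibit an explicit perturbation of norm exactly $-\ugr(\R^n,\Phi)$ that makes the system fail to be attractive. Setting $\gamma\coloneqq\ugr(\R^n,\Phi)<0$ and $B(t)\coloneqq -\gamma\,\id_{\R^n}$, one has $\norm{B}_{\infty}=|\gamma|=-\ugr(\R^n,\Phi)$, because $\norm{\id_{\R^n}}=1$ for every operator norm induced by a vector norm. The perturbed equation $\dot{y}=(A(t)-\gamma\,\id_{\R^n})y$ is then precisely the shifted ODE \eqref{eq:shiftedode} whose solution operator is $\Phi_\gamma$ from \eqref{eq:phigamma}. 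The shift formula \eqref{eq:shiftgrowth} yields $\ugr(\R^n,\Phi_\gamma)=\ugr(\R^n,\Phi)-\gamma=0$, so $\Phi_\gamma$ is not attractive, the EMD definition requiring a strict inequality. Consequently $r(A)\leq\norm{B}_{\infty}=-\ugr(\R^n,\Phi)$.

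The only mild subtlety is to match strict against non-strict inequalities across the two halves: the preceding proposition delivers only $\ugr(\R^n,\Psi)\leq 0$ at $\norm{B}_{\infty}=-\ugr(\R^n,\Phi)$, which by itself is insufficient. The lower-bound argument, however, needs only the strict bound $\norm{B}_{\infty}<-\ugr(\R^n,\Phi)$, while the upper bound is witnessed by an explicit equality-norm perturbation realizing non-attractivity. This closes the gap and, parallelling \autoref{theo:stabradiusprocess}, identifies the stability radius with the distance of $0$ from the spectrum of the associated solution operator.
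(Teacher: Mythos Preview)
Your argument is correct and follows exactly the route sketched in the paragraph preceding the theorem: the lower bound via the Coppel-type estimate of the preceding proposition (applied with $\delta=\norm{B}_{\infty}<-\ugr(\R^n,\Phi)$ to get a strictly negative upper growth rate), and the upper bound by exhibiting the shifted system $\dot y=(A(t)-\ugr(\R^n,\Phi)\id_{\R^n})y$ as a perturbation of norm exactly $-\ugr(\R^n,\Phi)$ that fails to be attractive. Your explicit treatment of the strict/non-strict inequality issue is a welcome clarification of a point the paper leaves implicit.
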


Another consequence of EMD-robustness deals with linearizations. The next lemma states that the linearization depends continuously on the initial value of the trajectory along which we linearize.

\begin{lemma}\label{lemma:contlinear}
Consider Eq.\ \eqref{eq:ode} and let $\phi$ denote the associated solution operator. Then the function
\begin{align*}
\R^n &\to C(\I,\cL(\R^n)), & x&\mapsto\partial_1f(\cdot,\phi(\cdot,\tmin,x)),
\end{align*}
is continuous.
\end{lemma}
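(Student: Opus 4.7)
The plan is to reduce the statement to uniform continuity of $\partial_1 f$ on an appropriate compact subset of $\I\times\R^n$, and then invoke continuous dependence of the flow on its initial value. Fix $x_0\in\R^n$ and $\eps\in\R_{>0}$; we want to produce $\delta\in\R_{>0}$ such that $\abs{x-x_0}<\delta$ forces $\sup_{t\in\I}\norm{\partial_1f(t,\phi(t,\tmin,x))-\partial_1f(t,\phi(t,\tmin,x_0))}<\eps$.

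First, by \autoref{lemma:uniformcontspace} applied to the $C^1$-process $\phi$, the map $\R^n\ni x\mapsto\phi(\cdot,\tmin,x)\in C(\I,\R^n)$ is continuous. Hence the set
\[
K\coloneqq\set{\phi(t,\tmin,x);~t\in\I,~x\in B[x_0,1]}
\]
is the continuous image of the compact set $\I\times B[x_0,1]$ and is therefore compact in $\R^n$. Next, since $f\in C^1(I\times\R^n,\R^n)$, the restriction of $\partial_1f$ to the compact set $\I\times K$ is uniformly continuous, so there exists $\eta\in\R_{>0}$ such that $\norm{\partial_1f(t,y)-\partial_1f(t,y')}<\eps$ whenever $t\in\I$ and $y,y'\in K$ with $\abs{y-y'}<\eta$.

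Finally, using \autoref{lemma:uniformcontspace} once more, choose $\delta\in(0,1]$ such that $\abs{x-x_0}<\delta$ implies $\sup_{t\in\I}\abs{\phi(t,\tmin,x)-\phi(t,\tmin,x_0)}<\eta$. For such $x$ and any $t\in\I$ both $\phi(t,\tmin,x)$ and $\phi(t,\tmin,x_0)$ lie in $K$, and the uniform continuity step yields the required supremum estimate.

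I do not anticipate any serious obstacle: the only point that requires a moment of care is ensuring that the image set $K$ on which one invokes uniform continuity of $\partial_1 f$ is genuinely compact, which is exactly the content of the process version of \autoref{lemma:uniformcontspace} combined with compactness of $\I$ and of $B[x_0,1]$. Everything else is a two-step $\eps$--$\eta$--$\delta$ chain.
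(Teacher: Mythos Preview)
Your proof is correct and follows essentially the same two-step $\eps$--$\eta$--$\delta$ chain as the paper: first use continuity of $\partial_1 f$ in the space variable, then continuous dependence of $\phi$ on the initial value via \autoref{lemma:uniformcontspace}. If anything, your version is slightly more explicit than the paper's in that you construct the compact set $K=\phi(\I,\tmin,B[x_0,1])$ and invoke uniform continuity of $\partial_1 f$ on $\I\times K$, which makes the uniformity in $t$ transparent; the paper's proof leaves this implicit.
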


\begin{proof}
Let $t\in\I$, $x\in\R^n$ and $\eps\in\R_{>0}$. By the $C^1$ assumption on $f$ we have in particular that there exists $\delta_1\in\R_{>0}$ such that
\begin{equation*}
\abs{\phi(t,\tmin,x)-\phi(t,\tmin,y)}<\delta_1\Rightarrow\norm{\partial_1f(t,\phi(t,\tmin,x))-\partial_1f(t,\phi(t,\tmin,y))}<\eps.
\end{equation*}
Next note that $y\mapsto\phi(\cdot,\tmin,y)\in C(\I,\R^n)$ is uniformly continuous on any bounded subset $U\subseteq\R^n$ with $x\in U$, i.e.\ there exists $\delta_2\in\R_{>0}$ such that for any $y\in U$ we have
\begin{equation*}
\abs{x-y}<\delta_2\quad\Longrightarrow\quad\norm{\phi(\cdot,\tmin,x)-\phi(\cdot,\tmin,y)}_{\infty}<\delta_1.
\end{equation*}
Combining the two continuity observations we obtain that for any $y\in\R^n$ we have
\begin{equation*}
\abs{x-y}<\delta_2\quad\Longrightarrow\quad\norm{\partial_1f(\cdot,\phi(\cdot,\tmin,x))-\partial_1f(\cdot,\phi(\cdot,\tmin,y))}_{\infty}<\eps.\qedhere
\end{equation*}
\end{proof}

Consequently, robustness of EMD carries over to initial values.

\begin{corollary}[{\cite[Theorem 5]{Berger2010}}]\label{cor:EMDrobustnonlinear}
Consider Eq.\ \eqref{eq:ode}, let $\phi$ be the associated solution operator and $\phi(\cdot,\tmin,x)$ be a hyperbolic/attractive/repulsive reference solution. Then there exists $\delta\in\R_{>0}$ such that for any $y\in B(x,\delta)$ the trajectories $\phi(\cdot,\tmin,y)$ are, respectively, hyperbolic/attractive/repulsive (with respect to the same corresponding subspaces).
\end{corollary}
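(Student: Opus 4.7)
The plan is to chain together three continuity / robustness facts already established: Lemma \ref{lemma:contlinear} (continuity of the linearized right-hand side in the initial datum), the remark preceding Theorem \ref{prop:robustlinsys} (that linear right-hand sides in $C(\I,\cL(\R^n))$ map continuously, with respect to $d_{\I}$, to their solution operators in $\cLP(\I,\R^n)$), and Theorem \ref{theo:robusthyp} (the quantitative robustness of EMD under $\tilde d_{\I}$-perturbations). Composition of the first two yields continuity of the map $y\mapsto\Phi_{(\tmin,y)}$ into $(\cLP(\I,\R^n),d_{\I})$; Theorem \ref{theo:robusthyp} then turns an open $d_{\I}$-neighbourhood of $\Phi_{(\tmin,x)}$ into an open neighbourhood of $x$ in $\R^n$ all of whose linearizations admit an EMD of the same index and with the same extremal projection.

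More precisely, I would first set $A_y(\cdot)\coloneqq\partial_1f(\cdot,\phi(\cdot,\tmin,y))\in C(\I,\cL(\R^n))$ and denote by $\Phi_{(\tmin,y)}\in\cLP(\I,\R^n)$ the solution operator of $\dot z=A_y(t)z$, which by the discussion following Eq.\ \eqref{eq:linode2} is exactly the linearization of $\phi$ along $\phi(\cdot,\tmin,y)$. Lemma \ref{lemma:contlinear} gives continuity of $y\mapsto A_y$ in the supremum norm on $C(\I,\cL(\R^n))$. A standard Gronwall estimate on $\I$, as invoked just before Theorem \ref{prop:robustlinsys}, gives continuity of the map $A_y\mapsto\Phi_{(\tmin,y)}$ with values in $(\cLP(\I,\R^n),d_{\I})$. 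Consequently
\begin{equation*}
\R^n\to(\cLP(\I,\R^n),d_{\I}),\qquad y\mapsto\Phi_{(\tmin,y)},
\end{equation*}
is continuous at $x$.

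Now assume $\phi(\cdot,\tmin,x)$ is hyperbolic, i.e.\ $\Phi_{(\tmin,x)}$ admits an EMD on $\I$ with some index $k\in\set{0,\ldots,n}$. Set $\theta\coloneqq\min\set{-\eugr[k](\Phi_{(\tmin,x)}),\elgr[n-k](\Phi_{(\tmin,x)})}>0$. By the continuity established above, there exists $\delta\in\R_{>0}$ such that $d_{\I}(\Phi_{(\tmin,x)},\Phi_{(\tmin,y)})<\theta$ for every $y\in B(x,\delta)$; in particular $\tilde d_{\I}(\Phi_{(\tmin,x)},\Phi_{(\tmin,y)})<\theta$ by definition of $d_{\I}$. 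Theorem \ref{theo:robusthyp} then guarantees that each such $\Phi_{(\tmin,y)}$ admits an EMD on $\I$ with the same index $k$, and the extremal subspaces of $\Phi_{(\tmin,x)}$ may be used to build the EMD-projection; this is precisely the statement that $\phi(\cdot,\tmin,y)$ is hyperbolic with respect to the same subspaces. The attractive case is the special instance $k=n$ and the repulsive case $k=0$, covered by the same argument without modification.

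There is no serious obstacle here: the only point deserving attention is the passage from $\sup$-norm continuity of the right-hand side to $d_{\I}$-continuity of the solution operator, which combines the elementary Gronwall bound on $\norm{\Phi_{(\tmin,y)}(\cdot,\tmin)-\Phi_{(\tmin,x)}(\cdot,\tmin)}_{\infty}$ with the fact that on a compact time set the resulting bound on trajectory differences controls also the logarithmic difference quotients, hence the $\tilde d_{\I}$-component of $d_{\I}$; since the paper already invokes this continuity in the lines preceding Theorem \ref{prop:robustlinsys}, we can treat it as given and the corollary follows directly.
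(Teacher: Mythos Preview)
Your proposal is correct and follows exactly the argument the paper intends: the corollary is stated without formal proof, preceded only by the sentence ``Consequently, robustness of EMD carries over to initial values,'' and the logical chain you spell out---Lemma~\ref{lemma:contlinear} for continuity of $y\mapsto A_y$, the Gronwall-based continuity of $A\mapsto\Phi$ into $(\cLP(\I,\R^n),d_{\I})$ invoked before Theorem~\ref{prop:robustlinsys}, and Theorem~\ref{theo:robusthyp} for the EMD robustness with preserved extremal subspaces---is precisely the implicit route the paper has in mind.
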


The next step is to show that \autoref{lemma:contmcont} applies. Therefore, it remains to establish the necessary order of convergence for the linearization error. To this end, consider Eq.\ \eqref{eq:ode} and assume that $f\in C^{0,2}(I\times\R^n,\R^n)$, i.e.\ $f$ is continuous in the first argument and twice continuously differentiable in the second argument. By Taylor's Theorem, the estimate \eqref{eq:unismallog} on the nonlinear term $g$ improves as follows: there exist $\eps,\delta\in\R_{>0}$ such that
\begin{equation}\label{eq:unismallog2}
\sup\set{\frac{\abs{g(t,x)}}{\abs{x}^2};~x\in B(0,\delta)\setminus\set{0},~t\in\I}<\eps.
\end{equation}

\begin{lemma}\label{lemma:c0cont}
Let $\phi$ be the solution operator of \eqref{eq:linearode} with $f\in C^{0,2}(I\times\R^n,\R^n)$ and $\Phi$ be the solution operator of the linearization \eqref{eq:linode2} along the reference solution $\phi(\cdot,\tmin,x)=0$. Then the function
\begin{align*}
\R_{\geq 0}&\to\R_{\geq 0},\\ \eta&\mapsto
\begin{cases}
0, & \eta=0,\\
\sup\set{\norm{\phi(\cdot,\tmin,y)-\Phi(\cdot,\tmin)y}_{\infty};~y\in B[0,\eta]}, &\text{otherwise},
\end{cases}
\end{align*}
is continuous in $0$. Moreover, it is of class $\mathcal{O}(\eta^2)$ for $\eta\to 0$.
\end{lemma}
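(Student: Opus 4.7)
The plan is to reduce the claim to a variation-of-constants estimate on the error, and then to feed in the quadratic bound on the nonlinear remainder from \eqref{eq:unismallog2}.

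First I would write the governing equations in the shifted coordinates. Since the reference solution is the zero trajectory, $\phi(\cdot,\tmin,y)$ satisfies $\dot{y}=A(t)y+g(t,y)$ while $\Phi(\cdot,\tmin)y$ satisfies $\dot{y}=A(t)y$. Setting $z_y(t)\coloneqq\phi(t,\tmin,y)-\Phi(t,\tmin)y$ therefore gives $\dot{z}_y(t)=A(t)z_y(t)+g(t,\phi(t,\tmin,y))$ with $z_y(\tmin)=0$, and variation of constants yields
\begin{equation*}
z_y(t)=\int_{\tmin}^{t}\Phi(t,s)\,g(s,\phi(s,\tmin,y))\d s,\qquad t\in\I.
\end{equation*}
Taking norms and using the uniform bound $M\coloneqq\sup\set{\norm{\Phi(t,s)};~(t,s)\in\I\times\I}<\infty$ from \autoref{lemma:uniformconttime}, we obtain
\begin{equation*}
\norm{z_y}_{\infty}\leq M\,(\tmax-\tmin)\,\sup_{s\in\I}\abs{g(s,\phi(s,\tmin,y))}.
\end{equation*}

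The second step is to control $\phi(\cdot,\tmin,y)$ itself for small $y$. Since $g(t,\cdot)$ inherits global Lipschitz continuity from $f$ and vanishes at $0$, we have $\abs{g(t,v)}\leq L\abs{v}$ for some $L\in\R_{>0}$, uniformly in $t\in\I$. Combined with the boundedness $\norm{A}_{\infty}<\infty$, Gronwall's inequality applied to $t\mapsto\abs{\phi(t,\tmin,y)}$ yields a constant $K\in\R_{>0}$ (depending only on $\norm{A}_{\infty}$, $L$ and $\tmax-\tmin$) such that
\begin{equation*}
\sup_{t\in\I}\abs{\phi(t,\tmin,y)}\leq K\abs{y}\qquad\text{for all }y\in\R^n.
\end{equation*}
In particular, for any $\eta\in\R_{>0}$ and $y\in B[0,\eta]$ the whole trajectory lies in $B[0,K\eta]$.

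Third, I would invoke the quadratic estimate \eqref{eq:unismallog2}, which provides $\eps,\delta\in\R_{>0}$ with $\abs{g(t,v)}\leq\eps\abs{v}^2$ for $v\in B(0,\delta)\setminus\set{0}$ and $t\in\I$. For $\eta\leq\delta/K$ and $y\in B[0,\eta]$, the previous step keeps $\phi(s,\tmin,y)$ inside $B[0,\delta]$, so
\begin{equation*}
\sup_{s\in\I}\abs{g(s,\phi(s,\tmin,y))}\leq\eps\sup_{s\in\I}\abs{\phi(s,\tmin,y)}^2\leq\eps K^2\abs{y}^2\leq\eps K^2\eta^2.
\end{equation*}
Plugging this into the variation-of-constants bound gives $\norm{\phi(\cdot,\tmin,y)-\Phi(\cdot,\tmin)y}_{\infty}\leq M(\tmax-\tmin)\eps K^2\eta^2$ for every $y\in B[0,\eta]$, which is exactly the $\mathcal{O}(\eta^2)$-statement and in particular yields continuity of the supremum function at $0$.

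The only subtle point is the a priori bound on $\phi(\cdot,\tmin,y)$ by $K\abs{y}$; everything else is a direct consequence of linearity and the quadratic remainder estimate. Since $g$ is globally Lipschitz in its second argument and $A$ is bounded on the compact interval $\I$, Gronwall handles this step without complications, so no genuine obstacle remains.
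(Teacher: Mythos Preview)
Your argument is correct. Where the paper proceeds differently is in how the quadratic estimate on $g$ is brought to bear. The paper does \emph{not} obtain an a priori bound on $\abs{\phi(\cdot,\tmin,y)}$; instead it writes the error in integral form, splits $g(s,\phi(s,\tmin,y))=[g(s,\phi(s,\tmin,y))-g(s,\Phi(s,\tmin)y)]+g(s,\Phi(s,\tmin)y)$, controls the first bracket by Lipschitz continuity of $g$ (which produces an additional $\abs{\phi-\Phi y}$-term to be absorbed by Gronwall), and applies the quadratic estimate \eqref{eq:unismallog2} to the second bracket at the \emph{linear} orbit $\Phi(s,\tmin)y$, whose size is trivially bounded by $\norm{\Phi(\cdot,\cdot)}_{\infty}\eta$. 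Thus the paper runs Gronwall on the error itself, while you run it on the nonlinear solution and then use variation of constants. Your route is a bit more modular (the a priori bound $\abs{\phi(\cdot,\tmin,y)}\leq K\abs{y}$ is reusable, e.g.\ in the proof of \autoref{lemma:c1cont}); the paper's route avoids the separate a priori step at the cost of a slightly larger Gronwall constant $2\norm{A}_{\infty}+L_f$. Either way the final bound is of the form $C\eta^2$ for small $\eta$, which is the claim.
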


\begin{proof}
Let $\eta\in\R_{>0}$ and $y\in\R^n$, $\abs{y}\leq\eta$. Integrating Eqs.\ \eqref{eq:linearode} and \eqref{eq:linode2}, we calculate for any $t\in\I$
\begin{align*}
\abs{\phi(t,\tmin,y)-\Phi(t,\tmin)y}&=\abs{\int_{\tmin}^t A(s)\left(\phi(s,\tmin,y)-\Phi(t,s)y\right)+g(s,\phi(s,\tmin,y))\d s}\\
&\leq\int_{\tmin}^t\abs{A(s)(\phi(s,\tmin,y)-\Phi(s,\tmin)y)}\d s+\displaybreak[2]\\
&\phantom{\leq} + \int_{\tmin}^t \abs{g(s,\phi(s,\tmin,y))-g(s,\Phi(s,\tmin)y)}\d s+\displaybreak[2]\\
&\phantom{\leq} + \int_{\tmin}^t\abs{g(s,\Phi(s,\tmin)y)}\d s\displaybreak[2]\\
&\leq (2\norm{A}_{\infty}+L_f)\int_{\tmin}^t\abs{\phi(s,\tmin,y)-\Phi(s,\tmin)y}\d s+\displaybreak[2]\\
&\phantom{\leq} + (\tmax-\tmin)\eps C_{\Phi}\eta^2,
\end{align*}
where $C_{\Phi}\coloneqq\norm{\Phi(\cdot,\cdot)}_{\infty}<\infty$ and $\eps\in\R_{>0}$ satisfies Eq.\ \eqref{eq:unismallog2} with $\delta\coloneqq C_\Phi\eta$. With the abbreviation
\begin{equation*}
C(\eta)\coloneqq(\tmax-\tmin)\eps C_{\Phi}\eta^2\e^{(2\norm{A}_{\infty}+L_f)(\tmax-\tmin)}\in\mathcal{O}(\eta^2),
\end{equation*}
the uniform estimate from above and Gronwall's lemma we obtain
\begin{equation*}
\sup\set{\norm{\phi(\cdot,\tmin,y)-\Phi(\cdot,\tmin)y}_{\infty};~y\in B[0,\eta]}\leq C(\eta)
\end{equation*}
and the assertion is proved.
\end{proof}

\begin{lemma}\label{lemma:c1cont}
Let $\phi$ be the solution operator of Eq.\ \eqref{eq:linearode} with $f\in C^{0,2}(I\times\R^n,\R^n)$ and $\Phi$ be the solution operator of the linearization \eqref{eq:linode2} along the reference solution $\phi(\cdot,\tmin,x)=0$. Then the function
\begin{align*}
\R_{\geq 0}&\to\R_{\geq 0},\\ \eta&\mapsto
\begin{cases}
0, & \eta=0,\\
\sup\set{\norm{\partial_0\phi(\cdot,\tmin,y)-\partial_0\Phi(\cdot,\tmin)y}_{\infty};~y\in B[0,\eta]}, &\text{otherwise},
\end{cases}
\end{align*}
is continuous in $0$. Moreover, it is of class $\mathcal{O}(\eta^2)$ for $\eta\to 0$.
\end{lemma}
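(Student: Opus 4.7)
The plan is to exploit the ODEs directly: since $\partial_0\phi(t,\tmin,y)$ and $\partial_0\Phi(t,\tmin)y$ are by definition the time derivatives of the respective trajectories, we can read them off from Eqs.\ \eqref{eq:linearode} and \eqref{eq:linode2} as
\begin{equation*}
\partial_0\phi(t,\tmin,y) = A(t)\phi(t,\tmin,y) + g(t,\phi(t,\tmin,y)), \qquad \partial_0\Phi(t,\tmin)y = A(t)\Phi(t,\tmin)y.
\end{equation*}
Subtracting these expressions yields the decomposition
\begin{equation*}
\partial_0\phi(t,\tmin,y) - \partial_0\Phi(t,\tmin)y = A(t)\bigl(\phi(t,\tmin,y) - \Phi(t,\tmin)y\bigr) + g(t,\phi(t,\tmin,y)),
\end{equation*}
and the idea is simply to bound each of the two summands on the right in $\mathcal{O}(\eta^2)$ uniformly in $t\in\I$ and $y\in B[0,\eta]$.

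For the first summand I would use the preceding \autoref{lemma:c0cont}: taking sup norms gives
\begin{equation*}
\sup_{t\in\I}\abs{A(t)\bigl(\phi(t,\tmin,y)-\Phi(t,\tmin)y\bigr)} \leq \norm{A}_{\infty}\, C(\eta),
\end{equation*}
which is in $\mathcal{O}(\eta^2)$ since $C(\eta)\in\mathcal{O}(\eta^2)$ by \autoref{lemma:c0cont}, and $\norm{A}_{\infty}<\infty$ by continuity on the compact interval $\I$. For the second summand I would invoke the quadratic estimate \eqref{eq:unismallog2} on the nonlinearity $g$. The key intermediate step is to control the argument of $g$: by the triangle inequality and \autoref{lemma:c0cont},
\begin{equation*}
\abs{\phi(t,\tmin,y)} \leq \abs{\Phi(t,\tmin)y} + \abs{\phi(t,\tmin,y)-\Phi(t,\tmin)y} \leq C_{\Phi}\eta + C(\eta),
\end{equation*}
so for all sufficiently small $\eta$ one has $\phi(t,\tmin,y)\in B(0,\delta)$ with $\delta$ as in \eqref{eq:unismallog2}, and hence
\begin{equation*}
\abs{g(t,\phi(t,\tmin,y))} \leq \eps\,\abs{\phi(t,\tmin,y)}^2 \leq \eps\bigl(C_{\Phi}\eta+C(\eta)\bigr)^2,
\end{equation*}
which is again $\mathcal{O}(\eta^2)$.

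Combining both estimates and taking the supremum over $y\in B[0,\eta]$ and $t\in\I$ establishes the claimed $\mathcal{O}(\eta^2)$ bound, which a fortiori implies continuity of the defined function at $0$. I do not foresee a genuine obstacle: the only delicate point is making sure that the quadratic estimate on $g$ may legitimately be applied to $\phi(t,\tmin,y)$ rather than just to $y$, which is why the a priori control of $\abs{\phi(t,\tmin,y)}$ via \autoref{lemma:c0cont} needs to precede the invocation of \eqref{eq:unismallog2}; no Gr\"onwall argument is required here, in contrast to the previous lemma, because the time derivative is already written explicitly by the ODEs.
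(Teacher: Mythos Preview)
Your proposal is correct and follows essentially the same approach as the paper: write the derivative difference via the ODEs as $A(t)(\phi-\Phi y)+g(t,\phi)$, bound the $A$-term by $\norm{A}_\infty C(\eta)$ using \autoref{lemma:c0cont}, and bound the $g$-term by the quadratic estimate \eqref{eq:unismallog2}. The only tactical difference is in the $g$-term: the paper inserts $g(t,\Phi(t,\tmin)y)$ and uses the Lipschitz constant $\norm{A}_\infty+L_f$ of $g$ on the difference, whereas you bound $\abs{\phi(t,\tmin,y)}$ a priori and apply \eqref{eq:unismallog2} directly to $\phi$---your variant is slightly more economical since it avoids invoking the Lipschitz property of $g$.
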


\begin{proof}
Let $\eta\in\R_{>0}$ and $y\in\R^n$, $\abs{y}\leq\eta$. We calculate
\begin{align*}
\abs{\partial_0\phi(t,\tmin,y)-\partial_0\Phi(t,\tmin)y} &= \abs{A(t)(\phi(t,\tmin,y)-\Phi(t,\tmin)y)+g(t,\phi(t,\tmin,y))}\\
&\leq\norm{A}_{\infty}\abs{\phi(t,\tmin,y)-\Phi(t,\tmin)y}+\\
&\phantom{\leq}+\abs{g(t,\phi(t,\tmin,y))-g(t,\Phi(t,\tmin)y)}+\\
&\phantom{\leq}+\abs{g(t,\Phi(t,\tmin)y)}\\
&\leq\norm{A}_{\infty}C(\eta)+(\norm{A}_{\infty}+L_f)C(\eta)+\eps C_{\Phi}\eta^2\\
&\leq (2\norm{A}_{\infty}+L_f)C(\eta)+\eps C_{\Phi}\eta^2,
\end{align*}
where we used the notation from the proof of \autoref{lemma:c0cont}. This proves the assertion.
\end{proof}

In summary, we found sufficient conditions to ensure that the function $m$ as defined in Eq. \eqref{eq:semimetric} is continuous at $0$ and hence \autoref{theo:stabcone}, \autoref{theo:stabmani} and \autoref{theo:linstability} apply.

\begin{remark}\label{remark:stabmani2}
We want to comment on some issues concerning the application of the results in \autoref{sec:linearization} to ODEs.
\begin{enumerate}
\item Concerning \autoref{theo:stabmani}, note that EMD-subspaces from the starting time-fiber $\set{\tmin}\times\R^n$ evolve non-linearly under $\phi$. Their extensions via $\phi$ considered as subsets in the extended state space $\I\times\R^n$ are $C^1$-manifolds since $\phi(\cdot,\tmin,\cdot)$ is continuously differentiable; see, for instance, \cite[Theorem 9.2]{Amann1990}.
\item Our version of \autoref{theo:stabmani} applied to the ODE situation extends some previous work on that topic:
in \cite{Haller1998,Haller2001,Berger2011} the ODE is first extended to the real line and the desired manifolds are then obtained as the local stable and unstable manifolds of solutions that are hyperbolic on $\R$ in the sense that they admit an exponential dichotomy. Note that our proof, in principle, does not restrict to norms induced by an inner product weighted with a symmetric, positive definite matrix, as it is done in \cite{Berger2011} (cf.\ also \cite[Remark 4(ii)]{Berger2011}). In \cite{Duc2008} an ``intrinsic'' proof is presented which is tailored for the case $\R^n=\R^2$. There it is shown, that the domains of attraction and repulsion are not empty under appropriate conditions.
\end{enumerate}
\end{remark}

\begin{remark}
Since our hyperbolicity notion is based fundamentally on the monotonicity of the norm of trajectories, a finite-time conjugacy between the linear process $\Phi$ and the general process $\phi$ should preserve the type of monotonicity of trajectories. Roughly speaking and supposing that the assumptions are satisfied for $\I$, \autoref{theo:conesdomains} can therefore be interpreted as a finite-time Hartman-Grobman-like theorem in the following informal sense: as demonstrated in \cite[p. 546]{Siegmund2002-2} the function
\begin{align*}
H\colon\I\times\R^n&\to\R^n, & (t,x)&\mapsto\phi(t,\tmin,\Phi(\tmin,t)x),
\intertext{with fiberwise inverse}
H(t,\cdot)^{-1}\colon\R^n&\to\R^n, & x&\mapsto\Phi(t,\tmin,\phi(\tmin,t)x),
\end{align*}
maps trajectories of $\Phi$ to trajectories of $\phi$ homeomorphically and is therefore a candidate for a (nonautonomous) topological conjugacy between $\Phi$ and $\phi$ with respect to the two zero reference solutions. Now consider restrictions of $H$ to $\Vs(\Phi)$ and $\Vu(\Phi)$ (or compact subsets $S\subset\Vs(\Phi)$ and $U\subset\Vu(\Phi)$ considered as subsets of $\Gr(1,\R^n)$). By \autoref{theo:conesdomains} we obtain for $y$ from the respective set with $\abs{y}$ sufficiently small, that $H$ preserves the monotonicity type of $\Phi(\cdot,\tmin)y$, but, in general, not the exponential rate. The task of finding a monotonicity preserving (nonautonomous) topological conjugacy for a whole neighborhood seems to be futile since too restrictive. In particular it is unclear how ``monotonicity preservation'' should be applied to trajectories with ``indefinite'' monotonicity behavior.
\end{remark}

\section{Conclusions}
\label{sec:conclusion}

In this work we introduced an abstract framework for what is often called ``finite-time dynamics'' by adapting the notion of (invertible) processes to the situation of a compact time set $\I$. Based on the notion of growth rates which we defined independently from any hyperbolicity notion we managed to give a coherent, unified and comprehensive presentation of the theory of linear analysis of finite-time processes on $\I$. Due to the strong ambition to establish continuity of the involved functions, we obtained new and simpler arguments to prove partially known results, sometimes getting rid of technical assumptions such as differentiability or the finiteness of the time set.

Evidently and once more shown by our results, finite-time hyperbolic trajectories play an important role in the local analysis of finite-time dynamics. Therefore, to have sufficient, possibly computable criteria for hyperbolic solutions is necessary for the application of the hyperbolicity concept to real-life problems. For two-dimensional Hamiltonian systems the existence of a hyperbolic solution in the neighborhood of a curve of so-called instantaneous stagnation points under some velocity bound on the governing vector field is proved in \cite{Haller1998}, whereas \cite{Duc2011} proved recently the existence in the neighborhood of an approximate solution. In \cite{Doan2011} it is shown that row diagonal dominance of the linear right hand side implies an EMD on $\I$. Other sufficient criteria are based on a partitioning approach \cite{Haller2001-1,Berger2011,Doan2011}.

We would like to emphasize that our way of reasoning is led by the idea to consider finite-time analysis as the analysis of ``unextendible'' systems, thereby restricting any argumentation to the time set $\I$. This allows for a consequent use of continuity arguments. However, other methods of analysis and proof may become available and reasonable when approximating infinite-time dynamical systems on bounded time-sets.

\subsection*{Acknowledgments}

The author is indebted to Maik Gröger, Sascha Trostorff and Marcus Waurick for several enlightening discussions. He also would like to thank Stefan Siegmund for introducing him to finite-time dynamics and for a couple of comments, Martin Rasmussen for bringing \cite{Berger2010} to his attention, and an anonymous referee for a thorough review.

\bibliographystyle{plain}

\end{document}